\pgfplotsset{compat=1.17}
\newtheorem{theorem}{Theorem}[section]
\newtheorem{corollary}[theorem]{Corollary}
\newtheorem{definition}[theorem]{Definition}
\newtheorem{lemma}[theorem]{Lemma}
\newtheorem{proposition}[theorem]{Proposition}
\newtheorem{remark}[theorem]{Remark}
\title[Quantitative Cyclicity and Geometric Analysis in Weighted Besov Spacess]{Quantitative Cyclicity, Stability, and Geometric Analysis in Weighted Besov Spaces}
\author[S. Hashemi Sababe]{Saeed Hashemi Sababe$^*$}
\address[S. Hashemi Sababe]{R\&D Section, Data Premier Analytics, Canada}
\email{hashemi\_1365@yahoo.com}
\thanks{Corresponding author}
\author[A. Baghban]{Amir Baghban}
\address[A. Baghban]{R\&D Section, Data Premier Analytics, Canada}
\email{a.baghban@datapremier.ca}
\subjclass[2020]{Primary 30H25, 47B32; Secondary 46E22, 30E20}
\keywords{Cyclicity, Weighted Besov Spaces, Drury–Arveson Space, Multiplier Algebras, Invariant Subspaces, Capacity, Zero Sets, Non-Commutative Function Theory, Stability, Perturbation}
\begin{document}
\sloppy

\maketitle

\begin{abstract}
We introduce new quantitative measures for cyclicity in radially weighted Besov spaces, including the Drury–Arveson space, by defining cyclicity indices based on potential theory and capacity. Extensions to non-commutative settings are developed, yielding analogues of cyclicity in free function spaces. We also study the stability of cyclic functions under perturbations of both the functions and the underlying weight, and we establish geometric criteria linking the structure of zero sets on the boundary to the failure or persistence of cyclicity. These results provide novel invariants and conditions that characterize cyclicity and the structure of multiplier invariant subspaces in a variety of function spaces.
\end{abstract}

\section{Introduction}

Cyclicity in function spaces has been a fundamental topic in functional analysis and operator theory. The study of cyclic vectors in classical function spaces dates back to Beurling’s theorem \cite{Beurling1949}, which characterizes cyclic functions in the Hardy space \(H^2(\mathbb{D})\) as the outer functions. In the Bergman and Dirichlet spaces, cyclicity remains a widely investigated but less completely understood phenomenon \cite{Hedenmalm1997, Aleman1996, Richter1991}. In particular, weighted Besov spaces and the Drury–Arveson space \(H^2_d\) have attracted significant attention in multivariable operator theory due to their connection with multiplier algebras and Pick interpolation \cite{Arveson1998, Drury1978}.\\

\noindent
The purpose of this paper is to extend the study of cyclicity in several new directions. While previous research has largely focused on binary cyclicity classification, we propose a {\it quantitative measure of cyclicity} based on capacity and potential-theoretic ideas. Additionally, we explore the {\it stability of cyclicity under perturbations}, the impact of  {\it geometric properties of zero sets}, and the possibility of {\it non-commutative extensions} in free function spaces. These new perspectives aim to refine the classical theory and provide new tools for studying invariant subspaces in weighted function spaces.\\
Our main contributions include:\medskip\\
(i) Introduction of a \textbf{cyclicity index}, providing a quantitative refinement of the classical notion of cyclicity.\\
(ii) Extension of cyclicity concepts to \textbf{non-commutative function spaces}, particularly in free probability and free semigroup algebras.\\
(iii) Investigation of the \textbf{stability of cyclic functions} under perturbations in both the function and the underlying weight.\\
(iv) \textbf{Geometric criteria} for cyclicity, linking the structure of zero sets to failure or persistence of cyclicity.\\
(v) Generalization of cyclicity results to \textbf{mixed-norm and variable exponent Besov spaces}.\\

\noindent
Cyclicity in single-variable spaces has been extensively studied. The classical result of Beurling \cite{Beurling1949} states that an inner-outer factorization completely determines cyclicity in \(H^2(\mathbb{D})\). For the Bergman space \(A^2(\mathbb{D})\) and the Dirichlet space \(D(\mathbb{D})\), however, a complete characterization of cyclicity remains elusive. Important contributions in this direction include works by Richter \cite{Richter1991} and Aleman \cite{Aleman1996}, who studied cyclicity in the context of operator-theoretic models.

In several complex variables, cyclicity is even less understood. Cyclicity problems in the bidisc and polydisc settings have been studied by Knese, Kosiński, and others \cite{Knese2011, Kosinski2020}, while cyclicity in the Drury–Arveson space \(H^2_d\) has been investigated in relation to invariant subspaces and multiplier algebras \cite{Clouatre2018, Davidson2015, Perfekt2023}. In particular, the works of Aleman, Perfekt, Richter, and Sundberg \cite{AlemanPerfRichSund2023} provide deep insights into cyclicity in weighted Besov spaces and their multiplier algebras.

The geometric perspective on cyclicity has also been explored. In the Drury–Arveson space, a function’s cyclicity is often linked to the structure of its zero set in the unit ball and on its boundary. Recent results show that if the zero set of a function embeds a real cube of dimension \(\geq 3\), then the function cannot be cyclic \cite{BrownShields1995, Sola2020}. However, the precise role of geometric invariants such as Hausdorff dimension in cyclicity remains an open question, motivating further study in this direction.

On the stability of cyclicity, few results exist. It is natural to ask whether small perturbations of a cyclic function remain cyclic, either in terms of function perturbation or perturbation of the underlying weight in radially weighted Besov spaces. We aim to develop rigorous estimates quantifying such stability properties.

Finally, non-commutative extensions of cyclicity have been largely unexplored. Given the deep connections between the Drury–Arveson space and non-commutative function spaces \cite{Popescu2006, Davidson2011}, we seek to define and analyze cyclicity notions in free function theory, particularly in the context of non-commutative operator spaces.\\

\noindent
The remainder of this paper is structured as follows:\\
In Section \ref{sec:prelim}, we review the definitions and properties of weighted Besov spaces, multipliers, and cyclic functions. In Section \ref{sec:quantcyclic}, we introduce the concept of a cyclicity index and establish its properties. In Section \ref{sec:noncomm}, we extend cyclicity concepts to free function spaces and analyze multiplier structures in non-commutative settings. In Section \ref{sec:stability}, we study the stability of cyclicity under function and weight perturbations. In Section \ref{sec:geomzero}, we explore geometric constraints on zero sets and their relation to cyclicity. In Section \ref{sec:mixednorm}, we generalize cyclicity results to mixed-norm and variable exponent Besov spaces. In Section \ref{sec:conclusion}, we summarize our results and discuss open questions.\\

\noindent
Throughout this paper, we use the following standard notation:

\begin{itemize}
    \item \( \mathbb{D} \) denotes the unit disk, \( \mathbb{B}_d \) the unit ball in \( \mathbb{C}^d \).
    \item \( H^2(\mathbb{D}) \), \( A^2(\mathbb{D}) \), and \( D(\mathbb{D}) \) denote the Hardy, Bergman, and Dirichlet spaces, respectively.
    \item \( H^2_d \) refers to the Drury–Arveson space on \( \mathbb{B}_d \).
    \item \( B^N_\omega \) denotes the radially weighted Besov space with weight \( \omega \).
    \item \( \operatorname{Mult}(H) \) is the multiplier algebra of a function space \( H \).
    \item \( [f] \) represents the cyclic invariant subspace generated by \( f \).
\end{itemize}

\medskip
The next section introduces the necessary background on weighted Besov spaces and their multiplier algebras.

\section{Preliminaries}\label{sec:prelim}
In this section, we introduce the fundamental concepts and function spaces that form the foundation of our work. We begin by recalling the notion of Hilbert function spaces, followed by the definitions of weighted Besov spaces and the Drury–Arveson space. In addition, we describe the multiplier algebras associated with these spaces and introduce the classical concept of cyclicity, which will be refined in later sections. These preliminaries set the stage for our subsequent developments and provide the necessary notation and background for the reader.

\begin{definition}[Hilbert Function Space]
Let \(X\) be a set. A \emph{Hilbert function space} \(H\) on \(X\) is a Hilbert space of complex-valued functions on \(X\) such that for every \(z\in X\) the evaluation functional \(f\mapsto f(z)\) is continuous.
\end{definition}

\begin{definition}[Admissible Radial Measure and Weighted Besov Spaces]
Let \(d\in \mathbb{N}\) and denote by \(\mathbb{B}_d\) the unit ball in \(\mathbb{C}^d\). A measure \(\omega\) on \(\mathbb{B}_d\) is said to be an \emph{admissible radial measure} if it has the form
\[
d\omega(z)=d\mu(r)\,d\sigma(w),\quad z=r\,w,\quad 0\le r<1,\; w\in \partial\mathbb{B}_d,
\]
where \(d\sigma\) is the normalized rotation-invariant measure on \(\partial\mathbb{B}_d\) and \(\mu\) is a Borel measure on \([0,1]\) satisfying \(\mu((r,1])>0\) for all \(0<r<1\).

For a nonnegative integer \(N\), the \emph{radially weighted Besov space} \(B^N_\omega\) is defined as
\[
B^N_\omega=\{f\in \operatorname{Hol}(\mathbb{B}_d): R^N f\in L^2(\omega)\},
\]
with the radial derivative operator
\[
R=\sum_{j=1}^{d} z_j\frac{\partial}{\partial z_j},
\]
and norm given by
\[
\|f\|_{B^N_\omega}^2=
\begin{cases}
\|f\|_{L^2(\omega)}^2,& N=0,\\[1ex]
\omega(\mathbb{B}_d)|f(0)|^2+\|R^Nf\|_{L^2(\omega)}^2,& N>0.
\end{cases}
\]
For further details, see \cite{AlemanPerfRichSund2023}.
\end{definition}

\begin{definition}[Drury–Arveson Space]
The \emph{Drury–Arveson space} \(H^2_d\) is defined as the space of analytic functions \(f\) on \(\mathbb{B}_d\) that admit a power series representation
\[
f(z)=\sum_{\alpha\in\mathbb{N}_0^d}\hat{f}(\alpha)z^\alpha,
\]
with the norm
\[
\|f\|_{H^2_d}^2=\sum_{\alpha\in\mathbb{N}_0^d}\frac{|\alpha|!}{\alpha!}|\hat{f}(\alpha)|^2<\infty.
\]
It is a reproducing kernel Hilbert space with kernel
\[
K(z,w)=\frac{1}{1-\langle z,w\rangle}.
\]
See \cite{Arveson1998,Drury1978} for background.
\end{definition}

\begin{definition}[Multiplier Algebra]
Let \(H\) be a Hilbert function space on \(\mathbb{B}_d\). The \emph{multiplier algebra} of \(H\), denoted by \(\operatorname{Mult}(H)\), is defined as
\[
\operatorname{Mult}(H)=\{\varphi: \varphi f\in H \text{ for all } f\in H\}.
\]
The norm is given by
\[
\|\varphi\|_{\operatorname{Mult}(H)}=\sup\{\|\varphi f\|_H: f\in H,\;\|f\|_H\le 1\}.
\]
In many cases, such as when \(H=B^0_\omega\) is a weighted Bergman space, one has \(\operatorname{Mult}(H)\subset H^\infty(\mathbb{B}_d)\).
\end{definition}

\begin{definition}[Cyclic Function and Invariant Subspace]
Let \(H\) be a Hilbert function space and \(f\in H\). The \emph{multiplier invariant subspace} generated by \(f\) is defined as
\[
[f]=\overline{\{\varphi f:\,\varphi\in \operatorname{Mult}(H)\}},
\]
where the closure is taken in the norm of \(H\). A function \(f\) is said to be \emph{cyclic} in \(H\) if \([f]=H\). In the case that polynomials are dense in \(H\), cyclicity is equivalent to the condition that \(1\in [f]\).
\end{definition}

\begin{definition}[Stable Polynomials]
A polynomial \(p\in \mathbb{C}[z_1,\dots,z_d]\) is called \emph{stable} if it has no zeros in \(\mathbb{B}_d\). The collection of stable polynomials is denoted by \(\mathcal{C}_{\mathrm{stable}}[z]\).
\end{definition}

\begin{definition}[Classes \(C_n(H)\)]
For a Hilbert function space \(H\) and an integer \(n\ge 0\), define
\[
C_n(H)=\{\varphi\in \operatorname{Mult}(H): \varphi\neq 0 \text{ and } [\varphi^n]=[\varphi^{n+1}]\}.
\]
In the classical setting \(H=H^2(\mathbb{D})\), these classes coincide with the collection of outer functions in \(H^\infty(\mathbb{D})\).
\end{definition}

The following results form part of the essential toolkit for our later analysis.

\begin{lemma}[Multiplier Inclusion Condition]\label{lem:multincl}
Let \(B^k_\omega\) denote a radially weighted Besov space. Then, for each \(k\in\mathbb{N}\),
\[
\operatorname{Mult}(B^k_\omega)\subset \operatorname{Mult}(B^{k-1}_\omega),
\]
and the inclusion is contractive:
\[
\|\varphi\|_{\operatorname{Mult}(B^{k-1}_\omega)}\le \|\varphi\|_{\operatorname{Mult}(B^k_\omega)},
\]
for all \(\varphi\in \operatorname{Mult}(B^k_\omega)\). (See \cite[Theorem 1.2]{AlemanPerfRichSund2023}.)
\end{lemma}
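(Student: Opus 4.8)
The plan is to deduce the inclusion from a complex‑interpolation (three‑lines) estimate for the multiplier norm along a one‑parameter family of spaces interpolating between $B^0_\omega$ and $B^k_\omega$. The first step is the radial/homogeneous description of the norm: writing $f=\sum_{n\ge0}f_n$ with $f_n$ homogeneous of degree $n$ and using rotation invariance of $\omega$, one gets $R^Nf=\sum_{n\ge1}n^Nf_n$, orthogonality of distinct homogeneous degrees in $L^2(\omega)$, and hence
\[
\|f\|_{B^N_\omega}^2=\sum_{n\ge0}\lambda_n^{2N}\,w_n\,\|f_n\|_{L^2(\partial\mathbb B_d,\sigma)}^2,\qquad w_n:=\int_0^1 r^{2n}\,d\mu(r),
\]
with $\lambda_0=1$ and $\lambda_n=n$ for $n\ge1$, the $n=0$ term reproducing $\omega(\mathbb B_d)|f(0)|^2$. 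Let $\Lambda=R+P_0$, where $P_0$ is the orthogonal projection onto the constant functions, so $\Lambda$ is positive and diagonal in the homogeneous grading with $\Lambda f_n=\lambda_n f_n$. The displayed identity says exactly that $\Lambda^N\colon B^N_\omega\to B^0_\omega$ is a unitary for every $N\ge0$, and therefore
\[
\|\varphi\|_{\operatorname{Mult}(B^N_\omega)}=\|A_N\|_{\mathcal B(B^0_\omega)},\qquad A_N:=\Lambda^N M_\varphi\Lambda^{-N}.
\]

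Next I would let $A_s:=\Lambda^s M_\varphi\Lambda^{-s}$ for $s$ in the strip $\{0\le\Re s\le k\}$. Since each $\lambda_n>0$, the operator $\Lambda^{it}$ is unitary on $B^0_\omega$, so $A_{it}=\Lambda^{it}A_0\Lambda^{-it}$ and $A_{k+it}=\Lambda^{it}A_k\Lambda^{-it}$ have constant norm $\|A_0\|$ and $\|A_k\|$ on the two edges of the strip. The Hadamard three‑lines theorem then gives
\[
\|A_\sigma\|\le\|A_0\|^{\,1-\sigma/k}\,\|A_k\|^{\,\sigma/k}\qquad(0\le\sigma\le k),
\]
so in particular $\|A_{k-1}\|\le\|A_0\|^{1/k}\|A_k\|^{(k-1)/k}$. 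The proof is then completed by two elementary observations. First, $\|A_0\|=\|\varphi\|_{\operatorname{Mult}(B^0_\omega)}\le\|\varphi\|_{H^\infty(\mathbb B_d)}$, because $B^0_\omega=\operatorname{Hol}(\mathbb B_d)\cap L^2(\omega)$ is a weighted Bergman space and $\|\varphi f\|_{L^2(\omega)}\le\|\varphi\|_\infty\|f\|_{L^2(\omega)}$. Second, $\|\varphi\|_{H^\infty(\mathbb B_d)}\le\|\varphi\|_{\operatorname{Mult}(B^k_\omega)}=\|A_k\|$, since the evaluation functionals on $B^k_\omega$ are nonzero (the space contains $1$) and the multiplier norm dominates the supremum norm via $M_\varphi^*k_z=\overline{\varphi(z)}k_z$. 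Hence $\|A_0\|\le\|A_k\|$, and the interpolation bound collapses to $\|A_{k-1}\|\le\|A_k\|$, i.e.\ $\|\varphi\|_{\operatorname{Mult}(B^{k-1}_\omega)}\le\|\varphi\|_{\operatorname{Mult}(B^k_\omega)}$; in particular finiteness of the right side forces that of the left, which is the asserted inclusion.

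The delicate point — and the step I expect to be the main obstacle — is justifying the three‑lines argument, since for $0<\Re s<k$ the operator $\Lambda^s$ is unbounded and $A_s$ is only densely defined a priori. I would handle this in the standard way: first establish the estimate for polynomial $\varphi$, where for polynomials $p,q$ the scalar function $s\mapsto\langle A_sp,q\rangle_{B^0_\omega}=\sum_{m,n}c_{mn}(\lambda_m/\lambda_n)^s$ is a finite exponential sum, hence bounded and analytic on the strip, so the classical three‑lines theorem applies and taking the supremum over $p,q$ in the unit ball of the (dense) polynomials bounds $\|A_\sigma\|$; then pass to general $\varphi\in\operatorname{Mult}(B^k_\omega)$ by approximating with the dilates $\varphi_r(z)=\varphi(rz)$, using that $\|\varphi_r\|_{\operatorname{Mult}(B^N_\omega)}\le\|\varphi\|_{\operatorname{Mult}(B^N_\omega)}$ — a Poisson average over the rotations $z\mapsto e^{i\theta}z$, which act unitarily on each $B^N_\omega$ — together with the fact that $\varphi_rf\to\varphi f$ pointwise with $B^{k-1}_\omega$‑norms bounded uniformly in $r$, so that the inequality persists in the limit. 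A routine ingredient used throughout is that polynomials are multipliers of every $B^N_\omega$, which is immediate from $\sup_n\lambda_{n+1}^{2N}w_{n+1}/(\lambda_n^{2N}w_n)<\infty$.
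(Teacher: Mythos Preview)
The paper does not prove this lemma at all; it simply records it as a known result and cites \cite[Theorem~1.2]{AlemanPerfRichSund2023}. So there is no in-paper argument to compare your proposal against.

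Your proposal is a correct, self-contained route to the contractive inclusion, and it is essentially the Stein/Hadamard interpolation argument that underlies results of this kind. The identification of $B^N_\omega$ with $B^0_\omega$ via the unitary $\Lambda^N$ (with $\Lambda=R+P_0$ diagonal on homogeneous components) is correct given the paper's norm, and the conjugation $\|\varphi\|_{\operatorname{Mult}(B^N_\omega)}=\|\Lambda^N M_\varphi\Lambda^{-N}\|_{\mathcal B(B^0_\omega)}$ is exactly what makes the three-lines estimate applicable. Your closing inequality $\|A_0\|\le\|\varphi\|_{H^\infty}\le\|A_k\|$ is the right way to collapse the interpolated bound to a contractive one, and both ends of that chain are justified as you say. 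The two technical concerns you flag are also handled appropriately: restricting first to polynomial $\varphi$ makes $s\mapsto\langle A_s p,q\rangle$ a finite exponential sum so the classical three-lines theorem applies without any unbounded-operator subtleties, and the dilation $\varphi_r(z)=\varphi(rz)$ is indeed the Poisson average of the unitary conjugates $U_\theta M_\varphi U_\theta^{-1}$ over scalar rotations, giving $\|\varphi_r\|_{\operatorname{Mult}}\le\|\varphi\|_{\operatorname{Mult}}$.

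One small clarification worth making explicit in your last step: from ``$\varphi_r f\to\varphi f$ pointwise with uniformly bounded $B^{k-1}_\omega$-norms'' you do not get norm convergence, but you do not need it. Since $B^{k-1}_\omega$ is a reproducing kernel Hilbert space, boundedness plus pointwise convergence yields weak convergence along a subsequence to $\varphi f$, and weak lower semicontinuity of the norm then gives $\|\varphi f\|_{B^{k-1}_\omega}\le\liminf_r\|\varphi_r f\|_{B^{k-1}_\omega}\le\|\varphi\|_{\operatorname{Mult}(B^k_\omega)}\|f\|_{B^{k-1}_\omega}$, which is all that is required.
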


\begin{lemma}[Basic Properties of Cyclic Multipliers]\label{lem:cyclic}
Assume that \(\operatorname{Mult}(H)\subset H\). Then:
\begin{enumerate}[(a)]
    \item If \(\varphi\in C_\infty(H)\) (i.e., \(\varphi\) is pseudo-cyclic), then \(\varphi(z)\neq 0\) for every \(z\in \mathbb{B}_d\).
    \item If \(\varphi,\psi\in \operatorname{Mult}(H)\) satisfy \([\varphi^n]=[\varphi^{n+1}]\) and \([\psi^m]=[\psi^{m+1}]\) for some \(n,m\ge 0\), then
    \[
    [\varphi^n\psi^m]=[\varphi^{n+1}\psi^{m+1}].
    \]
\end{enumerate}
(See \cite[Lemma 2.2]{AlemanPerfRichSund2023}.)
\end{lemma}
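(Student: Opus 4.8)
For part (a), the plan is a contradiction argument through orders of vanishing at an interior point. Suppose $\varphi\in C_\infty(H)$ has a zero at some $z_0\in\mathbb{B}_d$; since $\varphi$ is holomorphic and $\varphi\neq 0$, it vanishes to a finite order $m\geq 1$ at $z_0$, and then $\varphi^n$ vanishes to order exactly $mn$ at $z_0$ (the leading homogeneous part of the Taylor expansion of $\varphi^n$ at $z_0$ is the $n$-th power of that of $\varphi$, which is nonzero because $\mathbb{C}[z_1,\dots,z_d]$ is an integral domain). Fix $n\geq 0$ with $[\varphi^n]=[\varphi^{n+1}]$, which is available since $\varphi\in C_\infty(H)$. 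For each integer $k\geq 0$ the set $V_k=\{g\in H:\ \partial^\alpha g(z_0)=0 \text{ for all } |\alpha|<k\}$ is a \emph{closed} subspace of $H$, because point evaluations of holomorphic functions and of their derivatives are bounded functionals on a Hilbert function space (norm convergence forces locally uniform convergence via $|f(z)|\leq\sqrt{K(z,z)}\,\|f\|_H$ with $z\mapsto K(z,z)$ locally bounded, hence convergence of all Taylor coefficients at $z_0$). Every $\psi\varphi^{n+1}$ with $\psi\in\operatorname{Mult}(H)$ lies in $V_{m(n+1)}$, so $[\varphi^{n+1}]\subseteq V_{m(n+1)}$; but $\varphi^n=1\cdot\varphi^n\in[\varphi^n]=[\varphi^{n+1}]$, forcing $\varphi^n\in V_{m(n+1)}$, i.e. $\varphi^n$ vanishes to order $\geq mn+m$ at $z_0$. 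Since $m\geq 1$ this contradicts its order being exactly $mn$, so $\varphi$ must be zero-free on $\mathbb{B}_d$.

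For part (b), the plan is to isolate a one-factor statement and apply it twice. The statement: if $\theta\in\operatorname{Mult}(H)$ and $[\varphi^n]=[\varphi^{n+1}]$, then $[\theta\varphi^n]=[\theta\varphi^{n+1}]$. The inclusion $\supseteq$ is immediate; for $\subseteq$, write $\psi_j\varphi^{n+1}\to\varphi^n$ in $H$ with $\psi_j\in\operatorname{Mult}(H)$ (possible because $\varphi^n\in[\varphi^{n+1}]$), apply the bounded operator $M_\theta$ to get $\psi_j(\theta\varphi^{n+1})=\theta\psi_j\varphi^{n+1}\to\theta\varphi^n$, conclude $\theta\varphi^n\in\overline{\operatorname{Mult}(H)\cdot(\theta\varphi^{n+1})}=[\theta\varphi^{n+1}]$, and then use that $[\theta\varphi^{n+1}]$ is a closed $\operatorname{Mult}(H)$-submodule of $H$ to upgrade this to $[\theta\varphi^n]\subseteq[\theta\varphi^{n+1}]$. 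Granting it, apply it first with $(\varphi,n)$ and $\theta=\psi^m$, then with $(\psi,m)$ and $\theta=\varphi^{n+1}$, to obtain
\[
[\varphi^n\psi^m]=[\varphi^{n+1}\psi^m]=[\varphi^{n+1}\psi^{m+1}],
\]
which is the assertion. All products occurring here lie in $\operatorname{Mult}(H)\subseteq H$, so every bracket is well defined.

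The only step requiring genuine care is the closedness/limit-stability input used in (a); I would record it once, as a small auxiliary fact — norm-convergent sequences in $H$ converge locally uniformly, hence coefficientwise — and note that it applies verbatim to the Drury–Arveson space $H^2_d$ and to every $B^N_\omega$ under consideration. Everything else is formal manipulation of multiplier submodules, where the only real pitfalls are keeping track of which bracket contains which product and recording at the outset the standing facts that $1\in\operatorname{Mult}(H)$ and that $\operatorname{Mult}(H)$ is an algebra contained (by hypothesis) in $H$. I anticipate no substantive obstacle beyond this bookkeeping.
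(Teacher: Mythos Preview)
The paper does not prove this lemma; it is stated in the Preliminaries as a quoted result from \cite[Lemma~2.2]{AlemanPerfRichSund2023}, with no in-paper proof to compare against.

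Your argument is correct on both parts. In (a), the order-of-vanishing contradiction is the natural route: the one genuine analytic input is the closedness of the subspaces $V_k$, which you correctly reduce to continuity of point (and hence derivative) evaluations on a Hilbert function space. In (b), isolating the auxiliary claim that $[\varphi^n]=[\varphi^{n+1}]$ implies $[\theta\varphi^n]=[\theta\varphi^{n+1}]$ for any $\theta\in\operatorname{Mult}(H)$, and then applying it twice, is clean and complete; the bookkeeping is right and nothing is missing.
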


\begin{theorem}[One-Function Corona Theorem for \(B^N_\omega\)]\label{thm:corona}
Let \(\psi\in \operatorname{Mult}(B^N_\omega)\) be such that \(|\psi(z)|\ge 1\) for all \(z\in\mathbb{B}_d\). Then
\[
\frac{1}{\psi}\in \operatorname{Mult}(B^N_\omega).
\]
Consequently, for every \(\varphi\in \operatorname{Mult}(B^N_\omega)\),
\[
\sigma(M_\varphi)=\varphi(\mathbb{B}_d).
\]
(See \cite[Corollary 3.3]{AlemanPerfRichSund2023}.)
\end{theorem}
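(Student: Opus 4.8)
The plan is to split the assertion into two parts: first the inversion property $1/\psi\in\operatorname{Mult}(B^N_\omega)$, and then the computation of $\sigma(M_\varphi)$, which I would deduce formally from the inversion property together with the reproducing-kernel structure of $B^N_\omega$. I would prove the inversion property by induction on $N$. The base case $N=0$ is immediate: $B^0_\omega=L^2(\omega)\cap\operatorname{Hol}(\mathbb{B}_d)$ is a weighted Bergman space, and since $|\psi|\ge 1$ forces $\psi$ to be zero-free with $\|1/\psi\|_\infty\le 1$, the quotient $f/\psi$ is holomorphic and $\|f/\psi\|_{L^2(\omega)}\le\|f\|_{L^2(\omega)}$, so $1/\psi$ is a contractive multiplier. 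The real work is the inductive step, whose engine is a quantitative estimate on how the radial derivatives of a multiplier interact with $B^N_\omega$, and I would isolate that estimate first.

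The estimate is: for all $\psi\in\operatorname{Mult}(B^N_\omega)$, $f\in B^N_\omega$ and $0\le k\le N$,
\[
\|(R^k\psi)(R^{N-k}f)\|_{L^2(\omega)}\le C_{N,k}\,\|\psi\|_{\operatorname{Mult}(B^N_\omega)}\,\|f\|_{B^N_\omega}.
\]
I would prove it by induction on $N$. For $k=0$ it is simply the multiplier norm of $\psi$ acting on the holomorphic function $R^Nf\in B^0_\omega$, via the contractive inclusion $\operatorname{Mult}(B^N_\omega)\subset\operatorname{Mult}(B^0_\omega)$ of Lemma~\ref{lem:multincl}. For $1\le k\le N-1$ one writes $R^{N-k}f=R^{(N-1)-k}(Rf)$; since $(Rf)(0)=0$ we have $Rf\in B^{N-1}_\omega$ with $\|Rf\|_{B^{N-1}_\omega}=\|R^Nf\|_{L^2(\omega)}\le\|f\|_{B^N_\omega}$, and $\psi\in\operatorname{Mult}(B^{N-1}_\omega)$ contractively, so the estimate at level $N-1$ applies. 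For $k=N$ one uses the Leibniz rule for the derivation $R$: in $R^N(\psi f)=\sum_{j=0}^N\binom{N}{j}(R^j\psi)(R^{N-j}f)$ one isolates $(R^N\psi)f$, whose remaining contributions are $R^N(\psi f)$ (bounded by $\|\psi\|_{\operatorname{Mult}(B^N_\omega)}\|f\|_{B^N_\omega}$, as $\|R^Nh\|_{L^2(\omega)}\le\|h\|_{B^N_\omega}$), the $k=0$ term, and the terms with $1\le k\le N-1$, all already controlled.

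With this in hand, the inductive step for $1/\psi$ runs as follows. Assume the inversion property at level $N-1$; by Lemma~\ref{lem:multincl} and this hypothesis, $1/\psi\in\operatorname{Mult}(B^{N-1}_\omega)$. Fix $f\in B^N_\omega$, put $g:=f/\psi\in\operatorname{Hol}(\mathbb{B}_d)$, and note that $\omega(\mathbb{B}_d)|g(0)|^2\le\omega(\mathbb{B}_d)|f(0)|^2\le\|f\|_{B^N_\omega}^2$, so it suffices to bound $\|R^Ng\|_{L^2(\omega)}$. Differentiating $\psi g=f$ gives $\psi\,(Rg)=Rf-(R\psi)g=:F$, hence $Rg=F/\psi$. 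Here $\|Rf\|_{B^{N-1}_\omega}=\|R^Nf\|_{L^2(\omega)}\le\|f\|_{B^N_\omega}$, while $(R\psi)g=\bigl((R\psi)f\bigr)/\psi$ gives $\|(R\psi)g\|_{B^{N-1}_\omega}\le\|1/\psi\|_{\operatorname{Mult}(B^{N-1}_\omega)}\,\|(R\psi)f\|_{B^{N-1}_\omega}$; and expanding $R^{N-1}\bigl((R\psi)f\bigr)$ by Leibniz, using $\bigl((R\psi)f\bigr)(0)=0$ and the derivative estimate, bounds $\|(R\psi)f\|_{B^{N-1}_\omega}$ by a constant times $\|f\|_{B^N_\omega}$. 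Therefore $\|F\|_{B^{N-1}_\omega}\le C(\psi)\|f\|_{B^N_\omega}$, and so $\|R^Ng\|_{L^2(\omega)}=\|R^{N-1}(Rg)\|_{L^2(\omega)}\le\|Rg\|_{B^{N-1}_\omega}\le\|1/\psi\|_{\operatorname{Mult}(B^{N-1}_\omega)}\,\|F\|_{B^{N-1}_\omega}\le C'(\psi)\|f\|_{B^N_\omega}$, which gives $1/\psi\in\operatorname{Mult}(B^N_\omega)$.

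For the spectrum, I would argue as follows. Since $B^N_\omega$ is a Hilbert function space, its reproducing kernels satisfy $M_\varphi^*K_z=\overline{\varphi(z)}K_z$, so $\overline{\varphi(\mathbb{B}_d)}\subseteq\sigma(M_\varphi)$; conversely, if $\delta:=\inf_{z\in\mathbb{B}_d}|\varphi(z)-\lambda|>0$, then applying the inversion property to $(\varphi-\lambda)/\delta$ yields $1/(\varphi-\lambda)\in\operatorname{Mult}(B^N_\omega)$, so $M_{\varphi-\lambda}$ is invertible with inverse $M_{1/(\varphi-\lambda)}$ and $\lambda\notin\sigma(M_\varphi)$; hence $\sigma(M_\varphi)$ equals (the closure of) $\varphi(\mathbb{B}_d)$, as claimed. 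I expect the main obstacle to be the derivative estimate, and within it the extreme case $k=N$ — that $R^N$ of a multiplier still sends $B^N_\omega$ into $L^2(\omega)$ — which is precisely where Lemma~\ref{lem:multincl} carries the weight, letting the induction descend one order of smoothness at a time; a secondary point to handle carefully is the set of standing conventions (the inclusions $B^N_\omega\subseteq B^0_\omega$, density of polynomials, the Leibniz rule for $R$) under which one may treat $R^mh$ as an element of $B^0_\omega$ and perform these manipulations.
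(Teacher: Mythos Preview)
The paper does not supply its own proof of this theorem: it is quoted in the Preliminaries as a known result, with a pointer to \cite[Corollary 3.3]{AlemanPerfRichSund2023}, so there is no in-paper argument to compare against. That said, your proposal is essentially the standard argument used in that reference: induction on $N$, with the contractive chain $\operatorname{Mult}(B^N_\omega)\subset\operatorname{Mult}(B^{N-1}_\omega)$ (Lemma~\ref{lem:multincl}) feeding a Leibniz-rule estimate on the cross terms $(R^k\psi)(R^{N-k}f)$, and then differentiating the identity $\psi g=f$ once to push $Rg$ down to level $N-1$. The derivation of the spectrum from the inversion property via $M_\varphi^*K_z=\overline{\varphi(z)}K_z$ is also the usual one.

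Two small points worth tightening. First, your inequality $\|R^{N-1}(Rg)\|_{L^2(\omega)}\le\|Rg\|_{B^{N-1}_\omega}$ is an equality (since $(Rg)(0)=0$), not a genuine estimate; and when you write $\|Rg\|_{B^{N-1}_\omega}\le\|1/\psi\|_{\operatorname{Mult}(B^{N-1}_\omega)}\|F\|_{B^{N-1}_\omega}$ you are implicitly using that $F\in B^{N-1}_\omega$, which you only establish afterwards --- the logic is fine but the presentation should swap the order. Second, the theorem as stated writes $\sigma(M_\varphi)=\varphi(\mathbb{B}_d)$ without a closure; your argument (correctly) yields $\sigma(M_\varphi)=\overline{\varphi(\mathbb{B}_d)}$, and you should flag that the bar is implicit since the spectrum is closed.
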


\medskip
These foundational definitions and results will be used in subsequent sections to introduce a quantitative cyclicity index, extend cyclicity to non-commutative settings, analyze the stability of cyclic functions under perturbations, and study the geometric properties of zero sets. For more details on these topics, the reader is referred to \cite{Arveson1998, Drury1978, Hedenmalm1997, Richter1991}.

\section{Quantitative Measures of Cyclicity}\label{sec:quantcyclic}

In this section we introduce a new quantitative invariant—the \emph{cyclicity index}—that refines the classical binary notion of cyclicity. This invariant provides a numerical measure of how well a function \(f\) in a weighted Besov space \(H=B^N_\omega\) approximates the constant function \(1\) through multiplier actions. In particular, the cyclicity index quantifies the distance of \(1\) from the multiplier-generated invariant subspace \([f]\).

\begin{definition}[Cyclicity Index]\label{def:cyclicity_index}
Let \(H=B^N_\omega\) be a radially weighted Besov space and let \(f\in H\). Define the \emph{cyclicity index} of \(f\) by
\[
\mathcal{C}(f)=\inf\Bigl\{ \|1-\varphi f\|_{H} : \varphi\in \operatorname{Mult}(H) \Bigr\}.
\]
We say that \(f\) is \emph{quantitatively cyclic} if \(\mathcal{C}(f)=0\).
\end{definition}

\begin{remark}
The cyclicity index measures the best approximation of the constant \(1\) by elements of the form \(\varphi f\) with \(\varphi\in\operatorname{Mult}(H)\). Clearly, \(f\) is cyclic (i.e., \(1\in [f]\)) if and only if \(\mathcal{C}(f)=0\).
\end{remark}

\begin{lemma}\label{lem:cyc_prop}
Let \(H=B^N_\omega\) be a weighted Besov space and let \(f,g\in H\). Then:
\begin{enumerate}[(i)]
    \item \(\mathcal{C}(f)=0\) if and only if \(f\) is cyclic in \(H\).
    \item For any nonzero multiplier \(\psi\in \operatorname{Mult}(H)\), one has
    \[
    \mathcal{C}(f)\le \|\psi\|_{\operatorname{Mult}(H)}\,\mathcal{C}\Bigl(\frac{f}{\psi}\Bigr).
    \]
    \item If \(f_n\to f\) in \(H\), then 
    \[
    \liminf_{n\to\infty}\mathcal{C}(f_n)\ge \mathcal{C}(f).
    \]
\end{enumerate}
\end{lemma}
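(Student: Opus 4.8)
### Proof proposal

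The plan is to verify the three assertions in order, treating each as a routine consequence of the definition of $\mathcal{C}$ together with the multiplier and function-space structure.

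For \emph{(i)}, I would simply unwind the definitions: $\mathcal{C}(f)=0$ means the infimum of $\|1-\varphi f\|_H$ over $\varphi\in\operatorname{Mult}(H)$ is zero, i.e.\ there is a sequence $\varphi_n\in\operatorname{Mult}(H)$ with $\varphi_n f\to 1$ in $H$. Since each $\varphi_n f\in[f]$ and $[f]$ is closed, this is equivalent to $1\in[f]$. Invoking the density of polynomials in $B^N_\omega$ (as recorded in the definition of cyclicity in Section~\ref{sec:prelim}), the condition $1\in[f]$ is equivalent to $[f]=H$, i.e.\ to $f$ being cyclic. This step is essentially bookkeeping.

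For \emph{(ii)}, fix a nonzero $\psi\in\operatorname{Mult}(H)$ and let $g=f/\psi$; note $g$ need not lie in $H$ in general, but the inequality is vacuous unless $g\in H$, so assume $g\in B^N_\omega$. For any $\varphi\in\operatorname{Mult}(H)$ we have $\varphi\psi\in\operatorname{Mult}(H)$ since $\operatorname{Mult}(H)$ is an algebra, and $(\varphi\psi) g = \varphi f$. Hence
\[
\|1-\varphi f\|_H = \|1-(\varphi\psi)g\|_H \ge \mathcal{C}(f),
\]
wait — the direction needs care. Instead: start from an arbitrary $\eta\in\operatorname{Mult}(H)$ approximating $1$ by $\eta g$, so that $\|1-\eta g\|_H$ is close to $\mathcal{C}(g)$. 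Then $\psi^{-1}$ is not available, so rather consider: we want to bound $\mathcal{C}(f)$, and $\varphi:=\eta/\psi$ is not a multiplier. The correct route is to use $\eta\psi$ as the test multiplier for $f$ only when writing $f=\psi g$ makes $(\text{something})f$; since $\varphi f = \varphi\psi g$, choosing $\varphi$ so that $\varphi\psi=\eta$ would require dividing by $\psi$. So instead I would bound directly: given $\varphi\in\operatorname{Mult}(H)$ with $\varphi g$ near $1$, the product $\varphi$ itself tests $g$, and to test $f$ we observe $\mathcal{C}(f)\le\|1-\tfrac{1}{\psi}\cdot\varphi f\|$ is again blocked. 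The clean argument: for any $\varphi\in\operatorname{Mult}(H)$, $\mathcal{C}(f)\le \|1-(\varphi)\,f\|$ is false in this form; the intended inequality follows by noting $\varphi f=(\varphi\psi)g$, so $\{\varphi f:\varphi\in\operatorname{Mult}(H)\}\supseteq\{(\varphi\psi)g:\varphi\in\operatorname{Mult}(H)\}$ — the wrong inclusion. The honest path is: every $\varphi\in\operatorname{Mult}(H)$ gives $\varphi\psi\in\operatorname{Mult}(H)$, hence $\inf_\varphi\|1-\varphi\psi g\|\ge\inf_{\chi}\|1-\chi g\|=\mathcal{C}(g)$ where $\chi$ ranges over the sub-collection $\psi\operatorname{Mult}(H)$; combined with $\varphi\psi g=\varphi f$ this reads $\mathcal{C}(f)\le\inf_\varphi\|1-\varphi f\|$... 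I would therefore reorganize around the submultiplicativity $\|\chi h\|_H\le\|\chi\|_{\operatorname{Mult}(H)}\|h\|_H$: writing $1-\varphi f = 1-\varphi\psi g$ and factoring, for the optimal $\varphi_0$ nearly realizing $\mathcal{C}(f)$ one relates it to $g$ via $\|1-\varphi_0\psi g\|_H\le\|\psi\|_{\operatorname{Mult}}\,\|\tfrac1\psi-\varphi_0 g\|_H$ — but $1/\psi\notin H$ necessarily. Given these pitfalls, \textbf{I expect (ii) to be the main obstacle}: the statement as written is only plausible via the substitution $\varphi\mapsto\psi\varphi$ in the infimum defining $\mathcal{C}(f/\psi)$, using that $\psi\varphi\in\operatorname{Mult}(H)$ and $(\psi\varphi)(f/\psi)=\varphi f$, which shows $\mathcal{C}(f)\le\mathcal{C}(f/\psi)$ directly — and then the factor $\|\psi\|_{\operatorname{Mult}(H)}$ is simply a (possibly non-sharp) weakening one keeps for later flexibility, valid whenever $\|\psi\|_{\operatorname{Mult}(H)}\ge 1$, which one may assume after normalization. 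I would state it in whichever of these forms the definition cleanly supports.

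For \emph{(iii)}, lower semicontinuity, I would show that $f\mapsto\mathcal{C}(f)$ is $1$-Lipschitz, which is stronger and immediately yields the $\liminf$ bound. Indeed, for $f,g\in H$ and any $\varphi\in\operatorname{Mult}(H)$,
\[
\|1-\varphi g\|_H\le\|1-\varphi f\|_H+\|\varphi\|_{\operatorname{Mult}(H)}\|f-g\|_H,
\]
but the dependence on $\|\varphi\|_{\operatorname{Mult}(H)}$ prevents a uniform Lipschitz bound. Instead I would argue semicontinuity abstractly: $\mathcal{C}$ is an infimum of the continuous functions $f\mapsto\|1-\varphi f\|_H$ only up to the same multiplier-norm obstruction, so the cleanest route is a diagonal/extraction argument. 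Suppose $f_n\to f$ and $\liminf_n\mathcal{C}(f_n)=L<\mathcal{C}(f)$. Pass to a subsequence with $\mathcal{C}(f_n)\to L$, choose $\varphi_n$ with $\|1-\varphi_n f_n\|_H\le\mathcal{C}(f_n)+1/n$. If $\{\varphi_n\}$ were norm-bounded in $\operatorname{Mult}(H)$ one would get $\|1-\varphi_n f\|_H\to L$, contradicting $\mathcal{C}(f)>L$; the remaining case, where $\|\varphi_n\|_{\operatorname{Mult}(H)}\to\infty$, must be excluded, and I would handle it by the same kind of factorization/normalization used in (ii) — or, more safely, I would weaken (iii) to the form actually provable from the definition, namely that $\mathcal{C}$ is lower semicontinuous on the set where near-optimal multipliers can be chosen with controlled norm, which covers all intended applications in Section~\ref{sec:stability}. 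Modulo these normalizations, the three parts follow; the genuine content, and the step I would spend the most care on, is the interplay between the multiplier norm and the ambient $H$-norm in parts (ii) and (iii).
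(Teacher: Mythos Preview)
Your treatment of (i) is correct and matches the paper's argument exactly.

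For (ii), the paper does precisely the move you sketch and then back away from: it picks $\varphi$ nearly optimal for $f/\psi$, sets $\widetilde{\varphi}=\psi\varphi$, and then asserts
\[
\|1-\widetilde{\varphi}f\|_H \;=\; \Bigl\|\psi\Bigl(1-\varphi\,\tfrac{f}{\psi}\Bigr)\Bigr\|_H \;\le\; \|\psi\|_{\operatorname{Mult}(H)}\,\Bigl\|1-\varphi\,\tfrac{f}{\psi}\Bigr\|_H,
\]
from which the claimed inequality follows. Your suspicion is well founded: the displayed equality is false, since $\psi\bigl(1-\varphi\,\tfrac{f}{\psi}\bigr)=\psi-\varphi f$, whereas $1-\widetilde{\varphi}f=1-\psi\varphi f$. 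These agree only when $\psi\equiv 1$. Note also that your own substitution $\chi=\psi\varphi$ actually yields the \emph{reverse} inequality $\mathcal{C}(f/\psi)\le\mathcal{C}(f)$ (restricting the infimum defining $\mathcal{C}(f/\psi)$ to the subset $\psi\operatorname{Mult}(H)$ can only increase it, and on that subset $\chi(f/\psi)=\varphi f$), so that route does not rescue the stated direction either. In short, you located a genuine obstacle, and the paper's proof does not overcome it.

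For (iii), the paper follows exactly your subsequence strategy: choose $\varphi_{n_k}$ nearly optimal for $f_{n_k}$, write
\[
\|1-\varphi_{n_k}f\|_H \;\le\; \|1-\varphi_{n_k}f_{n_k}\|_H + \|\varphi_{n_k}\|_{\operatorname{Mult}(H)}\,\|f_{n_k}-f\|_H,
\]
and then asserts that ``the multipliers are bounded'' so the second term tends to zero. No justification for this uniform bound is given; it is precisely the gap you flagged. So your diagnosis is accurate: lower semicontinuity as stated requires an a priori control on the multiplier norms of near-optimal $\varphi$'s, and neither your proposal nor the paper supplies one.
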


\begin{proof}
We prove each part in turn.

\medskip

\subsubsection*{(i)}
 By definition, \(f\) is cyclic in \(H\) if and only if the invariant subspace
\[
[f] = \overline{\{\varphi f : \varphi\in \operatorname{Mult}(H)\}}
\]
equals \(H\). In particular, since the constant function \(1\) is in \(H\), cyclicity is equivalent to the existence of a sequence \(\{\varphi_n\}\subset \operatorname{Mult}(H)\) such that
\[
\varphi_n f \to 1 \quad \text{in } H.
\]
This convergence implies that for every \(\epsilon>0\) there exists \(\varphi\in \operatorname{Mult}(H)\) with
\[
\|1-\varphi f\|_H < \epsilon.
\]
Taking the infimum over all such multipliers, we conclude that
\[
\mathcal{C}(f) = 0.
\]
Conversely, if \(\mathcal{C}(f)=0\), then for every \(\epsilon>0\) there exists \(\varphi\in \operatorname{Mult}(H)\) such that
\[
\|1-\varphi f\|_H < \epsilon.
\]
Thus, \(1\) lies in the closure of the set \(\{\varphi f: \varphi\in \operatorname{Mult}(H)\}\), which means that \(f\) is cyclic in \(H\).

\medskip

\subsubsection*{(ii)} 
 Let \(\psi\in \operatorname{Mult}(H)\) be nonzero. By definition, for any \(\epsilon>0\) there exists a multiplier \(\varphi\in \operatorname{Mult}(H)\) such that
\[
\Bigl\|1-\varphi \frac{f}{\psi}\Bigr\|_H < \mathcal{C}\Bigl(\frac{f}{\psi}\Bigr) + \epsilon.
\]
 Define a new multiplier \(\widetilde{\varphi} = \psi \varphi\). Since multipliers form an algebra, we have \(\widetilde{\varphi}\in \operatorname{Mult}(H)\).\\

\noindent
Observe that
\[
\widetilde{\varphi} f = \psi\varphi f = \psi\left(\varphi\,\frac{f}{\psi}\right).
\]
Thus,
\[
1 - \widetilde{\varphi} f = 1 - \psi\left(\varphi\,\frac{f}{\psi}\right).
\]
Using the boundedness of the multiplier \(\psi\) (i.e. its multiplier norm), we estimate:
\[
\|1 - \widetilde{\varphi} f\|_H = \|\psi\bigl(1 - \varphi\,\frac{f}{\psi}\bigr)\|_H \le \|\psi\|_{\operatorname{Mult}(H)} \Bigl\|1 - \varphi\,\frac{f}{\psi}\Bigr\|_H.
\]
 It follows that
\[
\|1 - \widetilde{\varphi} f\|_H \le \|\psi\|_{\operatorname{Mult}(H)} \Bigl(\mathcal{C}\Bigl(\frac{f}{\psi}\Bigr) + \epsilon\Bigr).
\]
Since \(\widetilde{\varphi}\in \operatorname{Mult}(H)\) is an arbitrary choice arising from the approximation for \(f/\psi\), taking the infimum over all such multipliers gives
\[
\mathcal{C}(f) \le \|\psi\|_{\operatorname{Mult}(H)}\,\mathcal{C}\Bigl(\frac{f}{\psi}\Bigr).
\]
Since \(\epsilon>0\) was arbitrary, the desired inequality holds.

\medskip

subsubsection*{(iii)}
Suppose \(\{f_n\}\) is a sequence in \(H\) converging to \(f\) in the norm of \(H\).
 By definition, for each \(n\) we have
\[
\mathcal{C}(f_n) = \inf\Bigl\{ \|1 - \varphi f_n\|_H : \varphi \in \operatorname{Mult}(H) \Bigr\}.
\]
Let \(L = \liminf_{n\to\infty} \mathcal{C}(f_n)\). Then for any \(\epsilon>0\) there exists a subsequence \(\{f_{n_k}\}\) such that
\[
\mathcal{C}(f_{n_k}) < L + \epsilon \quad \text{for all } k.
\]
For each \(k\), choose \(\varphi_{n_k}\in \operatorname{Mult}(H)\) so that
\[
\|1-\varphi_{n_k} f_{n_k}\|_H < \mathcal{C}(f_{n_k}) + \epsilon < L + 2\epsilon.
\]
Now, observe that
\[
\|1-\varphi_{n_k} f\|_H \le \|1-\varphi_{n_k} f_{n_k}\|_H + \|\varphi_{n_k}\|_{\operatorname{Mult}(H)}\,\|f_{n_k}-f\|_H.
\]
Since \(f_{n_k}\to f\) in \(H\) and the multipliers are bounded, the second term tends to zero as \(k\to\infty\). Hence,
\[
\liminf_{k\to\infty}\|1-\varphi_{n_k} f\|_H \le L + 2\epsilon.
\]
Taking the infimum over all multipliers in \(\operatorname{Mult}(H)\) for \(f\), we deduce that
\[
\mathcal{C}(f) \le L + 2\epsilon.
\]
Since \(\epsilon>0\) is arbitrary, it follows that
\[
\mathcal{C}(f) \le \liminf_{n\to\infty}\mathcal{C}(f_n).
\]
This completes the proof of all three parts.
\end{proof}

In many classical settings, potential theory provides a useful tool for quantifying boundary phenomena. In our context, the geometry of the zero set of \(f\) on \(\partial\mathbb{B}_d\) can affect cyclicity.

\begin{definition}[Boundary Capacity of \(f\)]\label{def:cap}
Let \(f\in H\cap C(\overline{\mathbb{B}}_d)\) be such that \(f(z)\neq 0\) for all \(z\in \mathbb{B}_d\). Define the \emph{boundary capacity} of \(f\) by
\[
\operatorname{Cap}(f)=\operatorname{Cap}\Bigl(Z(f)\cap \partial\mathbb{B}_d\Bigr),
\]
where \(\operatorname{Cap}(E)\) denotes the \(\alpha\)-capacity of a set \(E\subset \partial\mathbb{B}_d\) for an appropriate parameter \(\alpha>0\) (see, e.g., \cite{Carleson1962, Garnett2007}).
\end{definition}

\begin{proposition}\label{prop:cap_bound}
Let \(f\in H\cap C(\overline{\mathbb{B}}_d)\) be as in Definition~\ref{def:cap}. Then there exists a constant \(C>0\) such that
\[
\mathcal{C}(f)\ge C\,\operatorname{Cap}(f).
\]
In particular, if \(\operatorname{Cap}(f)>0\), then \(f\) cannot be cyclic in \(H\).
\end{proposition}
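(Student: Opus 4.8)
The plan is to prove, for \emph{every} $\varphi\in\operatorname{Mult}(H)$, an estimate of the form $\operatorname{Cap}(f)\le C'\,\|1-\varphi f\|_H^{2}$ with $C'$ independent of $\varphi$ and $f$, and then take the infimum over $\varphi$ and rescale to the linear form in the statement. The bridge from the Hilbert-space norm to the boundary capacity will be a \emph{capacitary weak-type inequality} for $B^N_\omega$ --- the analogue of Beurling's estimate for the Dirichlet space and of the Maz'ya--Adams inequalities for Sobolev and Besov spaces: with $\alpha>0$ the parameter fixed in Definition~\ref{def:cap}, there is a constant $C_0>0$ such that every $h\in B^N_\omega$ admits a boundary value function $h^{*}$, defined $\operatorname{Cap}_\alpha$-quasi-everywhere on $\partial\mathbb{B}_d$ as its nontangential limit, and
\[
\operatorname{Cap}_\alpha\bigl(\{\zeta\in\partial\mathbb{B}_d:\ |h^{*}(\zeta)|>t\}\bigr)\ \le\ \frac{C_0}{t^{2}}\,\|h\|_{B^N_\omega}^{2}\qquad(t>0).
\]
I would obtain this either by citing the potential theory of weighted Dirichlet-type spaces (choosing $\alpha$ so that $B^N_\omega$ embeds in the corresponding energy space) or, in the standard range of $(N,\omega)$, by reproving it through a truncation (good-$\lambda$) argument applied to the reproducing kernel of $B^N_\omega$, in the style of Maz'ya and Adams--Hedberg.

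Now fix $\varphi\in\operatorname{Mult}(H)$ and set $g:=1-\varphi f$. The first substantive step is to locate $g^{*}$ on $E:=Z(f)\cap\partial\mathbb{B}_d$. For any reproducing kernel Hilbert space one has $\operatorname{Mult}(H)\subseteq H^\infty(\mathbb{B}_d)$ with $\|\varphi\|_{H^\infty}\le\|\varphi\|_{\operatorname{Mult}(H)}$, because $M_\varphi^{*}k_w=\overline{\varphi(w)}\,k_w$ for the reproducing kernel $k_w$ at $w$, which forces $|\varphi(w)|\le\|M_\varphi\|$ at each $w\in\mathbb{B}_d$. Since $f\in C(\overline{\mathbb{B}}_d)$ vanishes on $E$, for every $\zeta\in E$ we get $|\varphi(z)f(z)|\le\|\varphi\|_{H^\infty}|f(z)|\to 0$ as $z\to\zeta$ within $\mathbb{B}_d$, so $g(z)\to 1$; in particular $g$ has nontangential limit $1$ at every point of $E$, hence $E\subseteq\{\zeta:\ g^{*}(\zeta)=1\}\subseteq\{|g^{*}|>\tfrac12\}$.

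Applying the weak-type inequality with $h=g$ and $t=\tfrac12$, together with monotonicity of capacity, yields
\[
\operatorname{Cap}(f)=\operatorname{Cap}_\alpha(E)\ \le\ \operatorname{Cap}_\alpha\bigl(\{|g^{*}|>\tfrac12\}\bigr)\ \le\ 4C_0\,\|1-\varphi f\|_H^{2}.
\]
Taking the infimum over $\varphi\in\operatorname{Mult}(H)$ gives $\operatorname{Cap}(f)\le 4C_0\,\mathcal{C}(f)^{2}$. To pass to the linear bound asserted in the proposition, observe that $\operatorname{Cap}_\alpha$ is finite on compacta, so $\operatorname{Cap}(f)\le\operatorname{Cap}_\alpha(\partial\mathbb{B}_d)=:c_0<\infty$; hence $\sqrt{\operatorname{Cap}(f)}\ge c_0^{-1/2}\operatorname{Cap}(f)$, and combining with $\mathcal{C}(f)\ge(4C_0)^{-1/2}\sqrt{\operatorname{Cap}(f)}$ produces $\mathcal{C}(f)\ge C\,\operatorname{Cap}(f)$ with $C:=(4C_0c_0)^{-1/2}$, a constant independent of $f$. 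Finally, if $\operatorname{Cap}(f)>0$ then $\mathcal{C}(f)>0$, so by Lemma~\ref{lem:cyc_prop}(i) the function $f$ is not cyclic in $H$.

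The main obstacle is the capacitary weak-type inequality itself: one must pin down the correct capacity parameter $\alpha$ for the given pair $(N,\omega)$ and build the accompanying quasi-continuity and nontangential boundary-limit theory for $B^N_\omega$, which is genuinely delicate once $\omega$ moves away from the classical Bergman weights or $N$ grows, since in those regimes almost-everywhere boundary values may fail and one must work with fine limits and capacitary exceptional sets. A secondary point is the hypothesis $f\in C(\overline{\mathbb{B}}_d)$: it is precisely what converts ``$\varphi f$ is small near $E$'' into the clean inclusion $E\subseteq\{|g^{*}|>\tfrac12\}$, and the natural way to weaken it would be to assume only that $f$ has vanishing nontangential limits $\operatorname{Cap}_\alpha$-quasi-everywhere on $E$.
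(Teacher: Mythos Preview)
Your argument is sound and rests on the same core observation as the paper's: for any multiplier $\varphi$ the residual $g=1-\varphi f$ has boundary value $1$ on $E=Z(f)\cap\partial\mathbb{B}_d$, because $\varphi\in H^\infty$ and $f\in C(\overline{\mathbb{B}}_d)$ vanishes there, and this forces $\|g\|_H$ to control the capacity of $E$. Where you diverge is in the technical bridge to capacity. You work on the primal side, invoking the Maz'ya--Adams capacitary weak-type inequality $\operatorname{Cap}_\alpha(\{|h^{*}|>t\})\le C_0 t^{-2}\|h\|_H^{2}$ and applying it to the level set $\{|g^{*}|>\tfrac12\}\supseteq E$. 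The paper works on the dual side, positing an equilibrium-type measure $\mu$ supported on $E$ whose integration functional is bounded on $H$, and then bounding $\|1-\varphi f\|_H^{2}$ from below by an integral against $\mu$. These are two formulations of the same potential-theoretic fact, but your route is more explicit about what has to be true of the pair $(N,\omega)$ and the parameter $\alpha$ for the machinery to apply, and it avoids the awkward step in the paper where the integral $\int|\varphi f|^{2}\,d\mu$ appears (that integral is literally zero since $\mu$ lives on the zero set of $f$; the intended integrand is presumably $|1-\varphi f|^{2}$). Your careful linearization from the natural quadratic bound $\mathcal{C}(f)^{2}\gtrsim\operatorname{Cap}(f)$ to the stated linear one, via $\operatorname{Cap}(f)\le c_0$, is also a detail the paper elides when it sets $C=\sqrt{C_0}$.
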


\begin{proof}
By the definition of the cyclicity index (see Definition~\ref{def:cyclicity_index}),
\[
\mathcal{C}(f) = \inf_{\varphi \in \operatorname{Mult}(H)} \|1 - \varphi f\|_H.
\]
Thus, if we can show that any function \(\varphi \in \operatorname{Mult}(H)\) fails to approximate \(1\) efficiently in terms of the capacity of \(Z(f) \cap \partial\mathbb{B}_d\), then we can obtain the desired lower bound.\\

\noindent
From potential theory (see \cite{Carleson1962, Garnett2007}), given a compact subset \(E \subset \partial\mathbb{B}_d\), the capacity \(\operatorname{Cap}(E)\) quantifies how difficult it is to approximate \(1\) by analytic functions that vanish on \(E\). Specifically, if \(Z(f) \cap \partial\mathbb{B}_d\) has positive capacity, then there exists a probability measure \(\mu\) supported on \(Z(f) \cap \partial\mathbb{B}_d\) such that for any \(\varphi\in \operatorname{Mult}(H)\),
\[
\int_{\partial \mathbb{B}_d} |\varphi f|^2 \, d\mu \geq C_0 \operatorname{Cap}(f),
\]
where \(C_0 > 0\) is a constant independent of \(f\).\\

\noindent
Using the above inequality, we estimate
\[
\|1 - \varphi f\|_H^2 = \int_{\mathbb{B}_d} |1 - \varphi f|^2 \, d\omega.
\]
By splitting this integral into two regions—one where \(f\) is large and another near its zero set—we obtain
\[
\|1 - \varphi f\|_H^2 \geq \int_{\partial \mathbb{B}_d} |\varphi f|^2 \, d\mu \geq C_0 \operatorname{Cap}(f).
\]
Taking the infimum over all \(\varphi \in \operatorname{Mult}(H)\), we conclude that
\[
\mathcal{C}(f) \geq C \operatorname{Cap}(f),
\]
where \(C = \sqrt{C_0}\).\\

\noindent
Since \(\mathcal{C}(f) > 0\) implies that \(f\) is not cyclic (by Theorem~\ref{thm:char}), we conclude that if \(\operatorname{Cap}(f) > 0\), then \(f\) is not cyclic. This completes the proof.
\end{proof}

\noindent
We now show that the cyclicity index indeed characterizes cyclicity in \(H\).

\begin{theorem}[Cyclicity Index Characterization]\label{thm:char}
Let \(f\in H=B^N_\omega\) be such that \(f(z)\neq 0\) for all \(z\in \mathbb{B}_d\) and assume that \(f\) extends continuously to \(\overline{\mathbb{B}}_d\). Then the following statements are equivalent:
\begin{enumerate}[(a)]
    \item \(f\) is cyclic in \(H\) (i.e., \([f]=H\)).
    \item \(\mathcal{C}(f)=0\).
    \item For every \(\varepsilon>0\) there exists \(\varphi\in \operatorname{Mult}(H)\) with \(\|\varphi\|_{\operatorname{Mult}(H)}\le 1\) such that
    \[
    \|1-\varphi f\|_H<\varepsilon.
    \]
\end{enumerate}
\end{theorem}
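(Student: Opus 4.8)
The plan is to prove the equivalences by establishing the cycle (a) $\Rightarrow$ (c) $\Rightarrow$ (b) $\Rightarrow$ (a), since (b) $\Leftrightarrow$ (a) is already essentially recorded in Lemma~\ref{lem:cyc_prop}(i) and the intervening remark. The implications (c) $\Rightarrow$ (b) and (b) $\Rightarrow$ (a) are immediate: if for every $\varepsilon>0$ there is a normalized multiplier $\varphi$ with $\|1-\varphi f\|_H<\varepsilon$, then a fortiori the infimum defining $\mathcal{C}(f)$ is zero; and $\mathcal{C}(f)=0$ forces $1\in[f]$, hence $[f]=H$ whenever polynomials (equivalently, the constants together with the range of multiplier multiplication) are dense, which holds in $B^N_\omega$. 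So the substance is entirely in (a) $\Rightarrow$ (c): upgrading an arbitrary approximating sequence $\varphi_n f\to 1$ to one in which the multiplier norms are uniformly bounded by $1$.

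First I would reduce (c) to a normalization statement: it suffices to show that if $1\in[f]$, then $1$ lies in the closure of $\{\varphi f:\varphi\in\operatorname{Mult}(H),\ \|\varphi\|_{\operatorname{Mult}(H)}\le 1\}$. The natural device is the one-function corona theorem (Theorem~\ref{thm:corona}) together with the hypothesis that $f$ is zero-free on $\mathbb{B}_d$ and continuous up to $\overline{\mathbb{B}}_d$. The idea: given $\varphi_n$ with $\varphi_n f\to 1$, the functions $\varphi_n f$ are eventually bounded below in modulus on compact subsets, and one wants to use $1/(\varphi_n f)$-type corrections or a direct rescaling. More concretely, since $1-\varphi_n f\to 0$ in $H$ and $H$ embeds continuously into $\operatorname{Hol}(\mathbb{B}_d)$, we get $\varphi_n f\to 1$ uniformly on compacta, so on a fixed ball $r\mathbb{B}_d$ we have $|\varphi_n f|\ge \tfrac12$ eventually. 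The plan is then to replace $\varphi_n$ by $\psi_n=\varphi_n/\max(1,\|\varphi_n\|_{\operatorname{Mult}(H)})$ and control the error: writing $c_n=\max(1,\|\varphi_n\|_{\operatorname{Mult}(H)})$, one has $1-\psi_n f=1-\tfrac{1}{c_n}\varphi_n f=(1-\tfrac1{c_n})+\tfrac1{c_n}(1-\varphi_n f)$, and since $\|1-\varphi_n f\|_H\to 0$ while $\|\varphi_n f\|_H\to\|1\|_H$, the multiplier norms $\|\varphi_n\|_{\operatorname{Mult}(H)}$ need not be bounded a priori — this is where the real work lies.

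The main obstacle, therefore, is ruling out the blow-up $\|\varphi_n\|_{\operatorname{Mult}(H)}\to\infty$, or rather showing that even if it occurs one can still build a normalized approximating sequence. The cleanest route I would pursue is a two-step approximation: first approximate $1$ in $[f]$ by $\varphi f$ with $\varphi$ a polynomial multiple of a corona solution (using that stable-polynomial or bounded-below multipliers have reciprocals in $\operatorname{Mult}(B^N_\omega)$ by Theorem~\ref{thm:corona}), and then, having fixed such a $\varphi$, observe that $\|1-t\varphi f\|_H\to\|1\|_H$ as $t\to 0$ is useless but $\|1-\varphi f\|_H$ small with $\|\varphi\|\le C$ can be renormalized by instead working with $g=f/p$ for a suitable stable polynomial $p$ with $\|p\|_{\operatorname{Mult}}$ large enough that $\varphi/p$ has norm $\le 1$, invoking Lemma~\ref{lem:cyc_prop}(ii) in reverse together with $\mathcal{C}(f)\le\|p\|\,\mathcal{C}(f/p)$ — and noting $f$ and $f/p$ generate the same invariant subspace since $p$ is invertible in $\operatorname{Mult}(H)$. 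Here $p$ stable and zero-free on $\mathbb{B}_d$ means $1/p\in\operatorname{Mult}(B^N_\omega)$, so $[f]=[f/p]=[pf]$. Thus given $\varphi$ with $\|1-\varphi f\|_H<\varepsilon/\|p\|$ and $\|\varphi\|\le\|p\|$, the multiplier $\varphi/p$ has norm $\le 1$ and $\|1-(\varphi/p)(pf)\|_H=\|1-\varphi f\|_H<\varepsilon$; absorbing $p$ into $f$ (legitimate since they have the same cyclic subspace, so cyclicity of $f$ transfers) yields (c). I would present this carefully, flagging that the only genuine input beyond bookkeeping is the corona theorem, which guarantees the reciprocals of bounded-below multipliers needed to make the renormalization exact rather than merely approximate.
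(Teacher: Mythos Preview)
You correctly identify that (c) $\Rightarrow$ (b) and (b) $\Rightarrow$ (a) are immediate, and that all of the content lies in (a) $\Rightarrow$ (c). You are also right that the naive rescaling $\varphi\mapsto\varphi/\|\varphi\|_{\operatorname{Mult}(H)}$ destroys the approximation $\varphi f\approx 1$, so the norm constraint in (c) is not free. This is a sharper reading of the statement than the paper's own proof, which disposes of the step with the unjustified phrase ``by normalizing $\varphi$ (if necessary)''.

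However, your proposed repair via a stable polynomial $p$ does not close the gap. Two problems. First, the multiplier norm is submultiplicative, so $\|\varphi/p\|_{\operatorname{Mult}(H)}\le\|\varphi\|_{\operatorname{Mult}(H)}\cdot\|1/p\|_{\operatorname{Mult}(H)}$, not $\|\varphi\|/\|p\|$; since $\|p\|\cdot\|1/p\|\ge\|1\|=1$ always, you cannot force $\|\varphi/p\|\le 1$ by making $\|p\|$ large. Second, even granting that step, you arrive at (c) for $pf$, not for $f$: translating back gives $\|1-(\psi p)f\|_H<\varepsilon$ with $\|\psi p\|_{\operatorname{Mult}(H)}\le\|p\|$, which is exactly the unbounded situation you began with. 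The argument is circular.

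More seriously, (c) as written appears to be strictly stronger than (a) and (b), so no argument can succeed without extra hypotheses. In any reproducing kernel Hilbert space one has $|\varphi(z)|\le\|\varphi\|_{\operatorname{Mult}(H)}$, so $\|\varphi\|_{\operatorname{Mult}(H)}\le 1$ forces $|\varphi(0)|\le 1$. Point evaluation at $0$ is continuous on $H$, hence $\|1-\varphi f\|_H<\varepsilon$ forces $|1-\varphi(0)f(0)|$ to be small, which is impossible if $|f(0)|<1$. Thus the cyclic function $f\equiv\tfrac12$ (or any cyclic $f$ with $|f(0)|<1$) satisfies (a) and (b) but violates (c). Your instinct that the normalization is the crux was correct; the issue is that it cannot be carried out in general, and the equivalence (a) $\Leftrightarrow$ (c) fails as stated.
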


\begin{proof}
We prove the equivalence of the three statements:

\[
(a) \iff (c) \iff (b).
\]

\medskip

\subsubsection*{\((a) \Rightarrow (c)\)}
 By definition, \( f \) is cyclic in \( H \) if and only if the cyclic subspace generated by \( f \),
    \[
    [f] = \overline{\{\varphi f : \varphi \in \operatorname{Mult}(H)\}},
    \]
 is dense in \( H \). Since \( 1 \in H \), density means that there exists a sequence of multipliers \( \{\varphi_n\} \subset \operatorname{Mult}(H) \) such that
    \[
    \varphi_n f \to 1 \quad \text{in } H.
    \]
Thus, for every \( \varepsilon > 0 \), there exists \( \varphi \in \operatorname{Mult}(H) \) such that
    \[
    \|1 - \varphi f\|_H < \varepsilon.
    \]
 Moreover, by normalizing \( \varphi \) (if necessary), we may assume that \( \|\varphi\|_{\operatorname{Mult}(H)} \leq 1 \).

\subsubsection*{\((c) \Rightarrow (a)\)}
The given condition states that for every \( \varepsilon > 0 \), we can find \( \varphi \in \operatorname{Mult}(H) \) such that \( 1 \) can be approximated arbitrarily well by \( \varphi f \).  This means that \( 1 \) lies in the closure of the set \( \{\varphi f : \varphi \in \operatorname{Mult}(H)\} \). By definition of cyclicity, this implies that \( f \) is cyclic in \( H \).

\subsubsection*{\((b) \Rightarrow (c)\)}
By the definition of the cyclicity index,
    \[
    \mathcal{C}(f) = \inf_{\varphi \in \operatorname{Mult}(H)} \|1 - \varphi f\|_H.
    \]
If \( \mathcal{C}(f) = 0 \), then for every \( \varepsilon > 0 \), we can find \( \varphi \in \operatorname{Mult}(H) \) such that
    \[
    \|1 - \varphi f\|_H < \varepsilon.
    \]
Additionally, we may assume \( \|\varphi\|_{\operatorname{Mult}(H)} \leq 1 \) by normalizing \( \varphi \). This is exactly statement (c).

\subsubsection*{\((c) \Rightarrow (b)\)}
The given condition states that for any \( \varepsilon > 0 \), we can find a function \( \varphi \in \operatorname{Mult}(H) \) with
    \[
    \|1 - \varphi f\|_H < \varepsilon.
    \]
Taking the infimum over all such \( \varphi \), we obtain
    \[
    \mathcal{C}(f) = 0.
    \]
Thus, statement (c) implies statement (b).\\

\noindent
Since we have established:

\[
(a) \iff (c) \quad \text{and} \quad (b) \iff (c),
\]
it follows that all three statements are equivalent. This completes the proof.
\end{proof}

\noindent
An important aspect of our quantitative measure is its robustness under small perturbations. This is essential for applications in which both the function and the weight may vary.

\begin{theorem}[Stability under Perturbations]\label{thm:stability}
Let \(f\in H\) be cyclic, and let \(g\in H\) satisfy \(\|f-g\|_H<\delta\) for some \(\delta>0\). Then there exists a constant \(K>0\), depending on \(f\) and the multiplier algebra, such that
\[
\mathcal{C}(g)\le K\,\delta.
\]
In particular, if \(\delta\) is sufficiently small, then \(g\) is cyclic.
\end{theorem}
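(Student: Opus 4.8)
The plan is to convert the cyclicity of $f$ into an explicit, $\delta$-independent ``approximate inverse'' and then transfer that inverse onto $g$ by the triangle inequality. First I would fix once and for all a multiplier $\varphi_0\in\operatorname{Mult}(H)$ witnessing the cyclicity of $f$: by Theorem~\ref{thm:char} one may, for any prescribed tolerance $\eta>0$, choose $\varphi_0$ with $\|1-\varphi_0 f\|_H<\eta$, while in the favourable case that $f\in\operatorname{Mult}(H)$ is bounded below on $\mathbb{B}_d$ the one-function corona theorem (Theorem~\ref{thm:corona}) gives the sharper conclusion $\varphi_0=1/f\in\operatorname{Mult}(H)$ with $\varphi_0 f\equiv 1$. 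In either case set $K:=\|\varphi_0\|_{\operatorname{Mult}(H)}<\infty$, which depends only on $f$ and on $\operatorname{Mult}(H)$.

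Next I would test the constant $1$ against the single candidate $\varphi_0 g$. Using that $\operatorname{Mult}(H)$ acts boundedly on $H$ and that $\mathcal{C}(g)$ is an infimum over all multipliers,
\begin{align*}
\mathcal{C}(g)\le\|1-\varphi_0 g\|_H&\le\|1-\varphi_0 f\|_H+\|\varphi_0(f-g)\|_H\\
&\le\|1-\varphi_0 f\|_H+K\,\|f-g\|_H<\|1-\varphi_0 f\|_H+K\delta,
\end{align*}
where the first inequality is just Definition~\ref{def:cyclicity_index}. In the corona case the term $\|1-\varphi_0 f\|_H$ vanishes identically and one obtains exactly $\mathcal{C}(g)\le K\delta$; in general I would couple the auxiliary tolerance to the size of the perturbation, taking $\eta\le\delta$, so that $\mathcal{C}(g)\le(1+K)\delta$, with the constant still governed solely by $f$ and the multiplier algebra.

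Finally, to deduce that $g$ is cyclic once $\delta$ is small enough, I would invoke Lemma~\ref{lem:cyc_prop}(i) together with Theorem~\ref{thm:char}: the moment the right-hand side above collapses to $0$, the identity $\mathcal{C}(g)=0$ is equivalent to $[g]=H$. The delicate point, and what I expect to be the genuine obstacle, is the uniformity of $K$: for a general cyclic $f$ the witnessing multipliers $\varphi_0$ may have multiplier norm tending to infinity as $\|1-\varphi_0 f\|_H\to 0$, so the clean estimate $\mathcal{C}(g)\le K\delta$ with $K$ independent of $\delta$ really requires that $1$ lie in the \emph{range} of $M_f$ on $H$, not merely in the closure of that range --- equivalently, that $f$ be invertible in $\operatorname{Mult}(H)$. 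Carrying out that reduction is exactly where Theorem~\ref{thm:corona} must do the work: under a lower bound $|f(z)|\ge c>0$ on $\mathbb{B}_d$ it furnishes the fixed multiplier $\varphi_0=1/f$ and the argument closes cleanly, whereas outside that regime one should expect to impose an additional hypothesis on $f$ rather than to rely on bare cyclicity.
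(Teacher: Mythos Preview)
Your approach coincides with the paper's: both fix a multiplier $\varphi$ that nearly inverts $f$, apply the triangle inequality $\|1-\varphi g\|_H\le\|1-\varphi f\|_H+\|\varphi\|_{\operatorname{Mult}(H)}\|f-g\|_H$, and pass to the infimum defining $\mathcal{C}(g)$. The paper then asserts that for each $\varepsilon>0$ one may choose $\varphi$ with $\|1-\varphi f\|_H<\varepsilon$ and $\|\varphi\|_{\operatorname{Mult}(H)}\le M$ for a \emph{fixed} $M$, after which it lets $\varepsilon\to 0$ to conclude $\mathcal{C}(g)\le M\delta$.

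You have correctly identified the genuine obstacle, and it applies equally to the paper's own argument: the parenthetical assumption that the witnessing multipliers can be taken with uniformly bounded multiplier norm is not justified by cyclicity alone. That uniform bound is equivalent to $1$ lying in the \emph{range} of $M_f$, not merely its closure---i.e., to $f$ being invertible in $\operatorname{Mult}(H)$. Your reduction to the corona hypothesis $|f|\ge c>0$ on $\mathbb{B}_d$, invoking Theorem~\ref{thm:corona} to produce $\varphi_0=1/f\in\operatorname{Mult}(H)$ with $\varphi_0 f\equiv 1$, is the honest way to close the argument and obtain a $\delta$-independent $K$. Your alternative of coupling $\eta\le\delta$ does not save the general case, since then $K=\|\varphi_0\|_{\operatorname{Mult}(H)}$ depends on $\eta$ and hence on $\delta$; you recognize this, and your conclusion that bare cyclicity is insufficient without an additional hypothesis on $f$ is well taken. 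In short, your proof is at least as rigorous as the paper's, and your final paragraph pinpoints a gap that the paper's proof leaves open.
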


\begin{proof}
Since \(f\) is cyclic, by definition for any \(\varepsilon > 0\) there exists a multiplier \(\varphi \in \operatorname{Mult}(H)\) (which we may assume to have norm \(\|\varphi\|_{\operatorname{Mult}(H)} \leq M\) for some fixed \(M>0\)) such that
\[
\|1 - \varphi f\|_H < \varepsilon.
\]
We now estimate \(\|1 - \varphi g\|_H\). By the triangle inequality,
\[
\|1 - \varphi g\|_H \le \|1 - \varphi f\|_H + \|\varphi f - \varphi g\|_H.
\]
Since \(\varphi\) is a multiplier, it acts boundedly on \(H\); hence,
\[
\|\varphi f - \varphi g\|_H = \|\varphi (f-g)\|_H \le \|\varphi\|_{\operatorname{Mult}(H)} \|f-g\|_H.
\]
Thus, we obtain
\[
\|1 - \varphi g\|_H \le \|1 - \varphi f\|_H + \|\varphi\|_{\operatorname{Mult}(H)}\,\|f-g\|_H.
\]
By our choice of \(\varphi\), we have
\[
\|1 - \varphi f\|_H < \varepsilon,
\]
and by assumption,
\[
\|f-g\|_H < \delta.
\]
Also, assume that \(\|\varphi\|_{\operatorname{Mult}(H)} \le M\). Then,
\[
\|1 - \varphi g\|_H < \varepsilon + M \, \delta.
\]
Since the above inequality holds for the particular \(\varphi\) chosen for \(f\) and since \(\varepsilon>0\) is arbitrary, we can take the infimum over all multipliers in \(\operatorname{Mult}(H)\) to obtain
\[
\mathcal{C}(g) = \inf_{\psi \in \operatorname{Mult}(H)} \|1 - \psi g\|_H \le \varepsilon + M \, \delta.
\]
Because \(\varepsilon > 0\) is arbitrary, we may let \(\varepsilon \to 0\) and conclude that
\[
\mathcal{C}(g) \le M \, \delta.
\]
Setting \(K = M\), we have shown that
\[
\mathcal{C}(g) \le K\,\delta.
\]
In particular, if \(\delta\) is sufficiently small, then \(\mathcal{C}(g)\) is small, implying that \(g\) is cyclic in \(H\) (since by the cyclicity index characterization, \(g\) is cyclic if and only if \(\mathcal{C}(g) = 0\)).

\noindent
This completes the proof.
\end{proof}

\noindent
The stability result indicates that the cyclicity index is a continuous (in fact, Lipschitz) functional on \(H\). This suggests that the property of being cyclic is robust with respect to small perturbations in both the function and the underlying structure of the space. Such stability is especially significant in applications where one deals with noisy data or approximations.

\section{Extensions to Non-Commutative and Free Function Settings}\label{sec:noncomm}

The theory of free (non-commutative) function spaces has emerged as a powerful tool in multivariable operator theory (see, e.g., \cite{Popescu2006, Davidson2011}). In this section, we extend the notion of cyclicity to the free setting. Our aim is to develop analogues of the classical definitions and quantitative invariants in the framework of free holomorphic functions, thereby broadening the scope of cyclicity concepts.

\begin{definition}[Free Holomorphic Function]
Let \(\mathbb{F}_d\) denote the free semigroup on \(d\) generators. A \emph{free holomorphic function} is a formal power series 
\[
F(Z)=\sum_{w\in \mathbb{F}_d} a_w Z^w,
\]
where \(Z=(Z_1,\dots,Z_d)\) is a \(d\)-tuple of non-commuting variables, and the coefficients \(a_w\) belong to \(\mathbb{C}\). Such series are typically considered on tuples of matrices (or operators) with sufficiently small joint norm.
\end{definition}

\begin{definition}[Free Hardy Space and Free Besov Space]
The \emph{free Hardy space} \(F^2_d\) consists of free holomorphic functions with square-summable coefficients:
\[
\|F\|_{F^2_d}^2=\sum_{w\in \mathbb{F}_d} |a_w|^2 < \infty.
\]
More generally, one can define \emph{free weighted Besov spaces} \(F B^N_\omega\) by imposing appropriate growth or smoothness conditions on the coefficients, analogous to the commutative case. For instance, a free Besov norm may take the form
\[
\|F\|_{F B^N_\omega}^2 = \sum_{w\in \mathbb{F}_d} \omega(|w|)\,|a_w|^2,
\]
with \(\omega: \mathbb{N}_0 \to (0,\infty)\) a weight sequence.
\end{definition}

\begin{definition}[Free Multiplier Algebra]
Let \(F\) be a free function space (e.g., \(F^2_d\) or \(F B^N_\omega\)). The \emph{free multiplier algebra} \(\operatorname{Mult}(F)\) consists of all free holomorphic functions \(\Phi\) such that
\[
\Phi \cdot F \in F \quad \text{for all } F\in F.
\]
The multiplier norm is given by
\[
\|\Phi\|_{\operatorname{Mult}(F)}=\sup\{\|\Phi \cdot F\|_F:\, F\in F,\,\|F\|_F\le 1\}.
\]
\end{definition}

\begin{definition}[Free Cyclic Vector]
Let \(F\) be a free function space and let \(G\in F\). We say that \(G\) is \emph{free cyclic} if the free multiplier invariant subspace
\[
[G]=\overline{\{\Phi\cdot G: \Phi\in\operatorname{Mult}(F)\}}
\]
is equal to \(F\).
\end{definition}

\begin{definition}[Free Cyclicity Index]\label{def:free_cyclicity_index}
For a free function \(G\in F\), define the \emph{free cyclicity index} as
\[
\mathcal{C}_{\mathrm{free}}(G)=\inf\Bigl\{\|I-\Phi \cdot G\|_{F}:\, \Phi\in\operatorname{Mult}(F)\Bigr\},
\]
where \(I\) denotes the free constant function equal to \(1\). We say that \(G\) is \emph{quantitatively free cyclic} if \(\mathcal{C}_{\mathrm{free}}(G)=0\).
\end{definition}

\noindent
This definition is an analogue of the cyclicity index in the commutative setting (see Definition~\ref{def:cyclicity_index}). It measures the optimal approximation of the identity by free multiplier actions on \(G\).

\begin{lemma}\label{lem:free_cyc_prop}
Let \(F\) be a free function space (either \(F^2_d\) or a free Besov space \(F B^N_\omega\)) and let \(G\in F\). Then:
\begin{enumerate}[(i)]
    \item \(G\) is free cyclic if and only if \(\mathcal{C}_{\mathrm{free}}(G)=0\).
    \item For any nonzero free multiplier \(\Psi\in \operatorname{Mult}(F)\),
    \[
    \mathcal{C}_{\mathrm{free}}(G) \le \|\Psi\|_{\operatorname{Mult}(F)}\,\mathcal{C}_{\mathrm{free}}\Bigl(\frac{G}{\Psi}\Bigr).
    \]
    \item The free cyclicity index is lower semicontinuous with respect to the norm topology in \(F\).
\end{enumerate}
\end{lemma}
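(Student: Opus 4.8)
The plan is to mirror, essentially verbatim, the proof of Lemma~\ref{lem:cyc_prop} from the commutative setting, since the only structural ingredients used there are: (1) the definition of the invariant subspace $[G]$ as a norm closure, (2) the fact that $\operatorname{Mult}(F)$ is an algebra, and (3) the submultiplicative estimate $\|\Phi\cdot H\|_F\le\|\Phi\|_{\operatorname{Mult}(F)}\|H\|_F$. All three hold for free function spaces by the definitions of $\operatorname{Mult}(F)$ and $\mathcal{C}_{\mathrm{free}}$ given above, so the argument transfers with $1\mapsto I$, $f\mapsto G$, $\varphi\mapsto\Phi$, $\|\cdot\|_H\mapsto\|\cdot\|_F$.

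For part (i): $G$ is free cyclic iff $[G]=F$; since $I\in F$ and polynomials (free polynomials in the $Z_j$, which lie in $\operatorname{Mult}(F)$) are dense, this is equivalent to $I\in[G]$, i.e. there exist $\Phi_n\in\operatorname{Mult}(F)$ with $\Phi_n\cdot G\to I$ in $F$, which is exactly $\inf_\Phi\|I-\Phi\cdot G\|_F=0$. The converse direction is the same reading of the infimum backwards. For part (ii): given $\varepsilon>0$ pick $\Phi\in\operatorname{Mult}(F)$ with $\|I-\Phi\cdot(G/\Psi)\|_F<\mathcal{C}_{\mathrm{free}}(G/\Psi)+\varepsilon$; set $\widetilde\Phi=\Psi\Phi\in\operatorname{Mult}(F)$ (algebra property), note $\widetilde\Phi\cdot G=\Psi\cdot(\Phi\cdot(G/\Psi))$, so
\[
\|I-\widetilde\Phi\cdot G\|_F=\bigl\|\Psi\cdot\bigl(I-\Phi\cdot(G/\Psi)\bigr)\bigr\|_F\le\|\Psi\|_{\operatorname{Mult}(F)}\bigl(\mathcal{C}_{\mathrm{free}}(G/\Psi)+\varepsilon\bigr),
\]
then take the infimum and let $\varepsilon\to0$. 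For part (iii): if $G_n\to G$ in $F$, fix a subsequence realizing $L=\liminf_n\mathcal{C}_{\mathrm{free}}(G_n)$, choose near-optimal $\Phi_{n_k}$ for each $G_{n_k}$, and use
\[
\|I-\Phi_{n_k}\cdot G\|_F\le\|I-\Phi_{n_k}\cdot G_{n_k}\|_F+\|\Phi_{n_k}\|_{\operatorname{Mult}(F)}\,\|G_{n_k}-G\|_F;
\]
one must ensure the chosen multipliers can be taken with uniformly bounded norm, which follows exactly as in the commutative case by the normalization argument (replacing $\Phi$ by a scalar multiple, or invoking the analogue of Theorem~\ref{thm:char}(c)); then the second term vanishes and $\mathcal{C}_{\mathrm{free}}(G)\le L$.

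The one point that deserves genuine care — and the main obstacle — is the normalization step in (iii) (and implicitly the clean equivalence in (i)): in the commutative development this is handled by the characterization theorem showing one may restrict to $\|\varphi\|_{\operatorname{Mult}(H)}\le1$, but no free analogue of Theorem~\ref{thm:char} has been stated, so I would either (a) add the trivial observation that if $\|I-\Phi\cdot G\|_F<\varepsilon$ with $\varepsilon<\|I\|_F$ then $\|\Phi\cdot G\|_F\le\|I\|_F+\varepsilon$, which does not bound $\|\Phi\|_{\operatorname{Mult}(F)}$ directly, so instead (b) argue more carefully: fix any single $\Phi_0$ with $\|I-\Phi_0\cdot G\|_F<1$ and note it suffices to show $\mathcal{C}_{\mathrm{free}}(G_n)$ is eventually finite and that for the subsequence we may reuse multipliers whose norms stay bounded because they are chosen from a fixed $\varepsilon$-neighborhood — or, cleanest, simply add ``$\|\Phi\|_{\operatorname{Mult}(F)}\le1$'' into Definition~\ref{def:free_cyclicity_index}'s infimum being attainable as in the commutative Theorem~\ref{thm:char}. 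In the write-up I will state and use the free analogue of the $\|\varphi\|\le1$ reduction as a one-line remark, noting the proof is identical to $(a)\Rightarrow(c)$ in Theorem~\ref{thm:char}; everything else is a routine transcription.
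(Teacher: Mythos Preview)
Your treatment of parts (i) and (ii) is essentially identical to the paper's proof and is correct.

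For part (iii), your approach (extract a subsequence realizing the $\liminf$, pick near-optimal $\Phi_{n_k}$, apply the triangle inequality) is the one used in the commutative Lemma~\ref{lem:cyc_prop}. The paper's free-setting proof is organized slightly differently: it fixes $\Phi$, writes $\|I-\Phi\cdot G\|_F\le\|I-\Phi\cdot G_n\|_F+\|\Phi\|_{\operatorname{Mult}(F)}\|G_n-G\|_F$, then claims to ``take the infimum over all $\Phi$'' while keeping the term $\|\Phi\|_{\operatorname{Mult}(F)}\|G_n-G\|_F$ on the right and asserting ``the multiplier norm is fixed.'' This is not more rigorous than your version; both arguments share exactly the gap you correctly isolated, namely the lack of any uniform bound on the multiplier norms of the near-optimal $\Phi$'s.

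Your proposed repair --- invoking a free analogue of the $\|\varphi\|_{\operatorname{Mult}}\le 1$ reduction from Theorem~\ref{thm:char}(c) --- does not succeed, because that reduction is itself asserted without justification in the paper (``by normalizing $\varphi$ (if necessary)''), and in fact no such reduction can hold in general. Take $G_n=\tfrac{1}{n}\,I$ in $F$: each $G_n$ is free cyclic (choose $\Phi=nI$, with $\|\Phi\|_{\operatorname{Mult}(F)}=n$), so $\mathcal{C}_{\mathrm{free}}(G_n)=0$ for all $n$; but $G_n\to 0$ in $F$ and $\mathcal{C}_{\mathrm{free}}(0)=\|I\|_F>0$. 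Hence $\liminf_n\mathcal{C}_{\mathrm{free}}(G_n)=0<\mathcal{C}_{\mathrm{free}}(0)$, and lower semicontinuity fails. (Abstractly: $G\mapsto\|I-\Phi\cdot G\|_F$ is continuous for each fixed $\Phi$, so the pointwise infimum is \emph{upper} semicontinuous, not lower.) The obstacle you flagged is therefore genuine and not removable; part (iii) is false as stated, and neither your argument nor the paper's can be completed.
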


\begin{proof}
We work in a free function space \(F\) (which may be either the free Hardy space \(F^2_d\) or a free Besov space \(F B^N_\omega\)) and let \(G\in F\). We prove the three statements in turn.

\subsubsection*{(i)}
 By definition, the \emph{free cyclicity index} is
\[
\mathcal{C}_{\mathrm{free}}(G)=\inf\Bigl\{ \|I - \Phi \cdot G\|_F : \Phi\in \operatorname{Mult}(F)\Bigr\},
\]
where \(I\) is the constant function equal to \(1\) in \(F\). 

\noindent
The function \(G\) is said to be \emph{free cyclic} if the free multiplier invariant subspace
\[
[G] = \overline{\{\Phi \cdot G: \Phi \in \operatorname{Mult}(F)\}}
\]
equals the entire space \(F\). In other words, \(I\in [G]\) if and only if there is a sequence \(\{\Phi_n\}\subset \operatorname{Mult}(F)\) such that
\[
\Phi_n \cdot G \to I \quad \text{in } F.
\]
Hence, \(G\) is free cyclic if and only if for every \(\varepsilon > 0\) there exists \(\Phi\in \operatorname{Mult}(F)\) with
\[
\|I-\Phi\cdot G\|_F < \varepsilon.
\]
By taking the infimum over all such \(\Phi\), we deduce that free cyclicity is equivalent to \(\mathcal{C}_{\mathrm{free}}(G)=0\).

\subsubsection*{(ii)}
Let \(\Psi\) be a nonzero free multiplier and consider the quotient \(G/\Psi\) (which is defined because \(\Psi\) is nonzero in the free domain). By the definition of the free cyclicity index for \(G/\Psi\), for any \(\varepsilon > 0\) there exists \(\Phi \in \operatorname{Mult}(F)\) such that
\[
\Bigl\| I - \Phi \cdot \frac{G}{\Psi}\Bigr\|_F < \mathcal{C}_{\mathrm{free}}\Bigl(\frac{G}{\Psi}\Bigr) + \varepsilon.
\]
Define a new free multiplier \(\widetilde{\Phi} = \Psi \Phi\). Since the free multiplier algebra is an algebra, we have \(\widetilde{\Phi} \in \operatorname{Mult}(F)\).

\noindent
 Notice that
\[
\widetilde{\Phi} \cdot G = (\Psi \Phi) \cdot G = \Psi \cdot \Bigl(\Phi \cdot \frac{G}{\Psi}\Bigr).
\]
Thus,
\[
I - \widetilde{\Phi} \cdot G = I - \Psi\Bigl(\Phi \cdot \frac{G}{\Psi}\Bigr).
\]
Using the boundedness of \(\Psi\) in the free multiplier norm, we obtain
\[
\|I - \widetilde{\Phi} \cdot G\|_F = \Bigl\| \Psi\Bigl(I - \Phi \cdot \frac{G}{\Psi}\Bigr) \Bigr\|_F \le \|\Psi\|_{\operatorname{Mult}(F)} \Bigl\|I - \Phi \cdot \frac{G}{\Psi}\Bigr\|_F.
\]
Hence,
\[
\|I - \widetilde{\Phi} \cdot G\|_F < \|\Psi\|_{\operatorname{Mult}(F)} \Bigl(\mathcal{C}_{\mathrm{free}}\Bigl(\frac{G}{\Psi}\Bigr) + \varepsilon\Bigr).
\]
Since \(\widetilde{\Phi}\) is an arbitrary multiplier produced in this way, taking the infimum over all such choices (and letting \(\varepsilon\) tend to zero) yields
\[
\mathcal{C}_{\mathrm{free}}(G) \le \|\Psi\|_{\operatorname{Mult}(F)}\,\mathcal{C}_{\mathrm{free}}\Bigl(\frac{G}{\Psi}\Bigr).
\]

\subsubsection*{(iii)}
Suppose \(\{G_n\}\) is a sequence in \(F\) such that \(G_n \to G\) in the norm of \(F\). By definition,
\[
\mathcal{C}_{\mathrm{free}}(G_n) = \inf\Bigl\{ \|I - \Phi \cdot G_n\|_F : \Phi \in \operatorname{Mult}(F) \Bigr\}.
\]
For each \(n\) and for any fixed \(\Phi\in \operatorname{Mult}(F)\),
\[
\|I - \Phi \cdot G\|_F \le \|I - \Phi \cdot G_n\|_F + \|\Phi\|_{\operatorname{Mult}(F)}\|G_n - G\|_F.
\]
Taking the infimum over all \(\Phi\in \operatorname{Mult}(F)\), we obtain
\[
\mathcal{C}_{\mathrm{free}}(G) \le \mathcal{C}_{\mathrm{free}}(G_n) + \|\Phi\|_{\operatorname{Mult}(F)}\|G_n - G\|_F.
\]
Since \(\|G_n - G\|_F \to 0\) as \(n\to \infty\) and the multiplier norm is fixed, we deduce that
\[
\liminf_{n\to \infty} \mathcal{C}_{\mathrm{free}}(G_n) \ge \mathcal{C}_{\mathrm{free}}(G).
\]
This shows that the free cyclicity index is lower semicontinuous with respect to the norm topology in \(F\).

\noindent  
This completes the proof.
\end{proof}

\noindent
The classical corona theorem has a fruitful analogue in the free setting.

\begin{theorem}[Free One-Function Corona Theorem]\label{thm:free_corona}
Let \(\Psi\in \operatorname{Mult}(F)\) be a free multiplier satisfying \(|\Psi(Z)|\ge 1\) for all tuples \(Z\) in the appropriate free domain. Then
\[
\frac{1}{\Psi}\in \operatorname{Mult}(F).
\]
Consequently, for every \(\Phi\in \operatorname{Mult}(F)\),
\[
\sigma(M_\Phi)=\Phi(\mathcal{D}),
\]
where \(\mathcal{D}\) denotes the free unit ball.
\end{theorem}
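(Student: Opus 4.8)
The plan is to transplant the commutative one-function corona theorem (Theorem~\ref{thm:corona}) into the free category, working throughout with the multiplication operator $M_\Psi$ acting on $F$. \textbf{Step 1 (the function $1/\Psi$).} First I read the hypothesis $|\Psi(Z)|\ge 1$ as: for every tuple $Z$ in the free domain $\mathcal{D}$, the operator $\Psi(Z)$ is invertible with $\|\Psi(Z)^{-1}\|\le 1$. Writing $\Psi=\Psi(0)\,I+\Psi_+$ with $\Psi_+$ free holomorphic and vanishing at $0$ (note $\|\Psi(0)^{-1}\|\le 1$), a formal Neumann inversion produces a free power series $\Theta$ with $\Theta(Z)=\Psi(Z)^{-1}$; gradedness, compatibility with direct sums and unitary conjugation, and the uniform bound $\|\Theta(Z)\|\le 1$ then show that $\Theta=1/\Psi$ is a genuine free holomorphic function on $\mathcal{D}$, bounded by $1$. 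This step is routine once the bookkeeping of nc power series is set up.

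\textbf{Step 2 (invertibility of $M_\Psi$).} In the nc reproducing-kernel picture of $F$ the adjoint $M_\Psi^{\ast}$ acts on the kernel sections $K(\cdot,W)h$ by $K(\cdot,W)\Psi(W)^{\ast}h$, and $M_\Psi$ is injective because $\Psi$ is a nonzero free holomorphic function. The substantive claim is that $M_\Psi$ is in addition bounded below and surjective, hence boundedly invertible with $\|M_\Psi^{-1}\|\le 1$. For $F=F^2_d$ this is the single-generator instance of Popescu's noncommutative corona theorem for the left analytic Toeplitz algebra $\mathcal{L}_d=\operatorname{Mult}(F^2_d)$. For a free Besov space $FB^N_\omega$ I would instead reduce to the commutative Theorem~\ref{thm:corona}: the symmetrization map $\pi\colon FB^N_\omega\to B^N_\omega$ (full Fock to symmetric Fock) is a coisometry intertwining free and commutative multiplications, so $\pi M_\Psi=M_{\pi\Psi}\pi$ with $|\pi\Psi|\ge 1$ on $\mathbb{B}_d$; Theorem~\ref{thm:corona} gives $1/(\pi\Psi)\in\operatorname{Mult}(B^N_\omega)$, and lifting invertibility back through the coisometry $\pi$ — together with the lower bound $M_\Psi^{\ast}M_\Psi\ge I$, obtained by evaluating $\Psi$ on scaled creation operators and passing to the boundary — yields invertibility of $M_\Psi$ itself.

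\textbf{Step 3 (conclusion and spectral corollary).} From $M_\Psi^{-1}\in B(F)$ and $\Psi\cdot\bigl(M_\Psi^{-1}F\bigr)=F$ for all $F\in F$ one identifies $M_\Psi^{-1}=M_{1/\Psi}$; applying it to the constant $I\in F$ gives $1/\Psi\in F$, and boundedness of $M_{1/\Psi}$ gives $1/\Psi\in\operatorname{Mult}(F)$ with $\|1/\Psi\|_{\operatorname{Mult}(F)}=\|M_\Psi^{-1}\|\le 1$. For the spectrum, fix $\Phi\in\operatorname{Mult}(F)$. If $\Phi-\lambda I$ is uniformly invertible on $\mathcal{D}$ (i.e.\ $\lambda$ is not in the closure of $\Phi(\mathcal{D})$), then after normalization Steps 1--2 apply to $\Phi-\lambda$ and show $M_\Phi-\lambda=M_{\Phi-\lambda}$ is invertible, so $\lambda\notin\sigma(M_\Phi)$. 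Conversely, if $\lambda\in\Phi(\mathcal{D})$, a tuple $W\in\mathcal{D}$ with $\lambda\in\sigma(\Phi(W))$ furnishes a nonzero kernel section $K(\cdot,W)h$ with $M_\Phi^{\ast}\bigl(K(\cdot,W)h\bigr)=\bar\lambda\,K(\cdot,W)h$, so $\bar\lambda\in\sigma_p(M_\Phi^{\ast})$ and hence $\lambda\in\sigma(M_\Phi)$. The two inclusions give $\sigma(M_\Phi)=\Phi(\mathcal{D})$ (with the customary caveat that the right-hand side is taken closed, which is automatic when $\Phi$ extends continuously to $\overline{\mathcal{D}}$), exactly as in Theorem~\ref{thm:corona}.

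\textbf{The main obstacle.} The genuinely hard point is the invertibility upgrade in Step 2: boundedness below of $M_\Psi$ is comparatively soft, but surjectivity is precisely the corona-type phenomenon that fails for $H^\infty(\mathbb{B}_d)$ and succeeds only because of the complete Pick / Toeplitz-corona structure. That structure is not automatic for the weighted free Besov spaces $FB^N_\omega$, so the real work is either to make the symmetrization transfer fully rigorous — verifying that $\pi$ is a coisometric algebra-intertwiner and that invertibility descends through it without losing the lower bound — or else to prove a self-contained noncommutative one-function corona theorem for $FB^N_\omega$ modeled on Popescu's argument for $F^2_d$. Everything else (Steps 1 and 3, and the passage from invertibility to the spectral identity) is formal once that kernel/corona machinery is in place.
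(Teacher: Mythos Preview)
Your overall architecture matches the paper's --- both pivot on showing that $M_\Psi$ is boundedly invertible and then identifying $M_\Psi^{-1}=M_{1/\Psi}$ --- but you do considerably more than the paper does. The paper's argument simply notes that $|\Psi(Z)|\ge 1$ makes $M_\Psi$ bounded below and then invokes ``the bounded inverse theorem'' to conclude invertibility, after which the inverse is identified pointwise as multiplication by $1/\Psi$; the spectral identity is dispatched in one sentence by appeal to ``free analogues of the $H^\infty$ functional calculus and the complete Pick property'' with citations to Popescu and Davidson--Pitts. You, by contrast, correctly flag that bounded-below alone does not yield invertibility without surjectivity --- which is exactly the corona content --- and you sketch two routes to supply it (Popescu's noncommutative corona theorem for $F^2_d$, a symmetrization transfer to Theorem~\ref{thm:corona} for $FB^N_\omega$). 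Your kernel-section argument in Step~3 for $\sigma(M_\Phi)=\Phi(\mathcal{D})$ is likewise more explicit than the paper's.

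In short, your proposal is correct and in fact more careful than the paper's own proof: the paper glosses over precisely the step you single out as the main obstacle. What your extra machinery buys is an honest argument with the hard point located and addressed; what the paper's shortcut buys is brevity, with the substantive corona work implicitly deferred to the cited references.
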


\begin{proof} 
Since \(|\Psi(Z)| \ge 1\) for every \(Z \in \mathcal{D}\), the function \(\Psi\) does not vanish on \(\mathcal{D}\). Consequently, the pointwise inverse function
\[
\frac{1}{\Psi}(Z) = \frac{1}{\Psi(Z)}
\]
is well-defined for all \(Z\in \mathcal{D}\).\\

\noindent
Our aim is to show that the mapping
\[
F \ni H \mapsto \frac{1}{\Psi} \cdot H \in F
\]
is bounded. In other words, we want to prove that there exists a constant \(C>0\) such that for all \(H \in F\),
\[
\Bigl\|\frac{1}{\Psi} \cdot H\Bigr\|_F \le C \, \|H\|_F.
\]
Since \(|\Psi(Z)|\ge 1\) pointwise, one expects that the norm of multiplication by \(1/\Psi\) is controlled by the multiplier norm of \(\Psi\) and the fact that \(\Psi\) is bounded below.\\

\noindent 
Define the multiplication operator
\[
M_\Psi : F \to F, \quad H \mapsto \Psi \cdot H.
\]
Because \(\Psi \in \operatorname{Mult}(F)\), the operator \(M_\Psi\) is bounded. Moreover, the assumption \(|\Psi(Z)|\ge 1\) for all \(Z\in \mathcal{D}\) implies that \(M_\Psi\) is bounded below, i.e., there exists \(m>0\) such that
\[
\|M_\Psi H\|_F = \|\Psi \cdot H\|_F \ge m\,\|H\|_F \quad \text{for all } H\in F.
\]
Then, by the bounded inverse theorem, \(M_\Psi\) is invertible on \(F\), and its inverse \(M_\Psi^{-1}\) is a bounded operator on \(F\).\\

\noindent
Notice that the pointwise inverse of \(\Psi\) satisfies
\[
\left(M_\Psi^{-1} H\right)(Z) = \frac{H(Z)}{\Psi(Z)},
\]
for all \(H \in F\) and for all \(Z \in \mathcal{D}\). In other words, \(M_\Psi^{-1}\) is exactly the multiplication operator \(M_{1/\Psi}\). Since \(M_\Psi^{-1}\) is bounded, it follows that
\[
\frac{1}{\Psi} \in \operatorname{Mult}(F)
\]
with
\[
\|M_{1/\Psi}\| = \|M_\Psi^{-1}\| < \infty.
\]
The corona theorem in the commutative setting shows that if a function is bounded below, then its inverse is a multiplier and the spectrum of the corresponding multiplication operator is the closure of its range. In the free setting, a similar conclusion holds. That is, for any \(\Phi\in \operatorname{Mult}(F)\), one may show using free analogues of the \(H^\infty\) functional calculus and the complete Pick property (see, e.g., \cite{Popescu2006, Davidson2011}) that
\[
\sigma(M_\Phi) = \Phi(\mathcal{D}).
\]
This completes the proof of the theorem.
\end{proof}

\begin{proposition}[Compression and Free Lifting]\label{prop:compression}
Let \(G\in F\) be a free holomorphic function. Suppose there exists a unital completely contractive homomorphism \(\pi\) from \(\operatorname{Mult}(F)\) onto a commutative multiplier algebra \(\operatorname{Mult}(H)\) (with \(H\) a commutative function space). Then the cyclicity of \(G\) in the free setting implies the cyclicity of \(\pi(G)\) in \(H\). Moreover,
\[
\mathcal{C}(\pi(G))\le \mathcal{C}_{\mathrm{free}}(G).
\]
\end{proposition}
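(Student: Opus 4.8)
The plan is to push the approximation statement through the homomorphism $\pi$. Suppose $G$ is free cyclic, so by Lemma~\ref{lem:free_cyc_prop}(i) we have $\mathcal{C}_{\mathrm{free}}(G)=0$; more generally, fix $\varepsilon>0$ and, by Definition~\ref{def:free_cyclicity_index}, choose $\Phi\in\operatorname{Mult}(F)$ with $\|I-\Phi\cdot G\|_F<\mathcal{C}_{\mathrm{free}}(G)+\varepsilon$. The first step is to observe that $\pi$ being a unital homomorphism gives $\pi(\Phi\cdot G)=\pi(\Phi)\,\pi(G)$ and $\pi(I)=1$, so that $1-\pi(\Phi)\,\pi(G)=\pi\bigl(I-\Phi\cdot G\bigr)$. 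Here I would need to be slightly careful about the sense in which $\pi(G)$ is defined: since $G$ need not lie in $\operatorname{Mult}(F)$, one extends $\pi$ to act on $G$ by continuity using that $\operatorname{Mult}(F)$ acts on $F$ and $\operatorname{Mult}(H)$ acts on $H$, i.e. one regards $\pi$ as intertwining the two module actions, so $\pi(\Phi\cdot G)=\pi(\Phi)\cdot\pi(G)$ still holds with the module product on the right.

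The second step is the norm estimate. Because $\pi$ is completely contractive (in particular contractive as a map of the multiplier algebras, and the induced map on the Hilbert modules is a contraction), we get
\[
\|1-\pi(\Phi)\,\pi(G)\|_H=\bigl\|\pi\bigl(I-\Phi\cdot G\bigr)\bigr\|_H\le\|I-\Phi\cdot G\|_F<\mathcal{C}_{\mathrm{free}}(G)+\varepsilon.
\]
Since $\pi(\Phi)\in\operatorname{Mult}(H)$, this element is a legitimate competitor in the infimum defining $\mathcal{C}(\pi(G))$ in Definition~\ref{def:cyclicity_index}, so $\mathcal{C}(\pi(G))<\mathcal{C}_{\mathrm{free}}(G)+\varepsilon$. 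Letting $\varepsilon\to 0$ yields $\mathcal{C}(\pi(G))\le\mathcal{C}_{\mathrm{free}}(G)$. In particular, when $G$ is free cyclic, $\mathcal{C}_{\mathrm{free}}(G)=0$ forces $\mathcal{C}(\pi(G))=0$, and then Theorem~\ref{thm:char} (or Lemma~\ref{lem:cyc_prop}(i)) gives that $\pi(G)$ is cyclic in $H$.

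The main obstacle I anticipate is not the inequality chain, which is essentially formal, but making rigorous the statement ``$\pi$ maps $F$-valued module elements to $H$-valued ones contractively.'' One must verify that the abstract hypothesis ``$\pi:\operatorname{Mult}(F)\to\operatorname{Mult}(H)$ unital completely contractive homomorphism, onto'' actually induces a well-defined contraction on the spaces themselves (or at least on the relevant cyclic submodules containing $G$, $\Phi\cdot G$, and $I$). The natural route is to note that $F$ and $H$ are, respectively, the completions of $\operatorname{Mult}(F)$ and $\operatorname{Mult}(H)$ acting on a distinguished cyclic vector (the constant $1$), so $\pi$ extends to a contraction on these completions by density; then $\pi(G)$ is simply the image of $G$ under this extension, and all identities used above are limits of identities in the multiplier algebras. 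A secondary point worth a sentence is the implicit assumption that $\operatorname{Mult}(H)\subset H$ (so that $1\in[\pi(G)]$ is the right notion of cyclicity), which holds in the weighted-Besov setting under consideration by the remarks in Section~\ref{sec:prelim}.
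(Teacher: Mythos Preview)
Your proposal is correct and follows essentially the same route as the paper: push the approximation $I-\Phi\cdot G$ through the unital, completely contractive homomorphism $\pi$, use the multiplicative property to write the image as $1-\pi(\Phi)\,\pi(G)$, and invoke contractivity to bound the $H$-norm by the $F$-norm before taking the infimum. The paper presents this via a convergent sequence $\Phi_n\cdot G\to I$, whereas you use an $\varepsilon$-approximation and then let $\varepsilon\to 0$; these are equivalent formulations. Your additional care about how $\pi$ is extended from $\operatorname{Mult}(F)$ to the module $F$ so that $\pi(G)$ and $\pi(I-\Phi\cdot G)$ make sense is well placed---the paper applies $\pi$ to elements of $F$ without comment, so your remark about extending $\pi$ to the cyclic module generated by the constant function is in fact more rigorous than the original argument.
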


\begin{proof}
Since \(G\) is free cyclic in \(F\), by definition there exists a sequence of free multipliers \(\{\Phi_n\}\subset \operatorname{Mult}(F)\) such that 
\[
\Phi_n \cdot G \to I \quad \text{in } F,
\]
where \(I\) denotes the constant function \(1\) in \(F\).\\

\noindent
Since \(\pi\) is a unital homomorphism, it satisfies \(\pi(I)=I\). Also, because \(\pi\) is completely contractive, it is norm-decreasing. Applying \(\pi\) to the sequence \(\{\Phi_n\}\) we obtain a sequence \(\{\pi(\Phi_n)\}\) in \(\operatorname{Mult}(H)\) satisfying
\[
\pi(\Phi_n \cdot G) = \pi(\Phi_n) \cdot \pi(G).
\]
Since \(\Phi_n \cdot G \to I\) in \(F\) and \(\pi\) is continuous (being completely contractive), it follows that
\[
\pi(\Phi_n) \cdot \pi(G) \to \pi(I) = I \quad \text{in } H.
\]
Thus, \(I\) can be approximated arbitrarily well by elements of the form \(\pi(\Phi_n) \cdot \pi(G)\), and therefore, by definition, \(\pi(G)\) is cyclic in \(H\).\\

\noindent
Recall that the free cyclicity index for \(G\) is defined as 
\[
\mathcal{C}_{\mathrm{free}}(G) = \inf\Bigl\{ \|I - \Phi \cdot G\|_F : \Phi \in \operatorname{Mult}(F) \Bigr\},
\]
and the cyclicity index for a function in \(H\) is similarly defined:
\[
\mathcal{C}(\pi(G)) = \inf\Bigl\{ \|I - \Psi \cdot \pi(G)\|_H : \Psi \in \operatorname{Mult}(H) \Bigr\}.
\]
Since \(\pi\) is unital and completely contractive, for any \(\Phi \in \operatorname{Mult}(F)\) we have
\[
\|I - \pi(\Phi) \cdot \pi(G)\|_H = \|\pi(I - \Phi \cdot G)\|_H \le \|I - \Phi \cdot G\|_F.
\]
Taking the infimum over all \(\Phi \in \operatorname{Mult}(F)\), we deduce that
\[
\mathcal{C}(\pi(G)) \le \mathcal{C}_{\mathrm{free}}(G).
\]
Thus, we have shown that if \(G\) is free cyclic in \(F\) (i.e., \(\mathcal{C}_{\mathrm{free}}(G)=0\)), then \(\pi(G)\) is cyclic in \(H\) (i.e., \(\mathcal{C}(\pi(G))=0\)). Moreover, the inequality 
\[
\mathcal{C}(\pi(G)) \le \mathcal{C}_{\mathrm{free}}(G)
\]
holds, completing the proof.
\end{proof}

\noindent
The preceding proposition establishes a useful connection between free cyclicity and classical cyclicity. This connection enables one to transfer results from the well-studied commutative case to the free setting, and vice versa.

\section{Stability and Perturbation of Cyclic Functions}\label{sec:stability}

In applications and theory alike, it is essential to understand how robust the property of cyclicity is under perturbations. In this section, we quantify the stability of cyclic functions with respect to both perturbations in the function itself and variations in the underlying weight of the space. We work in the setting of a radially weighted Besov space \(H = B^N_\omega\) throughout.

\begin{definition}[Perturbation Threshold]
Let \(f\in H\) be cyclic (i.e., \([f] = H\)). A number \(\delta_0 > 0\) is called a \emph{perturbation threshold} for \(f\) if for every \(g\in H\) satisfying
\[
\|f - g\|_H < \delta_0,
\]
the function \(g\) remains cyclic in \(H\).
\end{definition}

\begin{definition}[Lipschitz Stability of the Cyclicity Index]
We say that the cyclicity index \(\mathcal{C}: H\to [0,\infty)\) is \emph{Lipschitz stable} at \(f\in H\) if there exists a constant \(L>0\) and \(\delta_0>0\) such that for every \(g\in H\) with \(\|f-g\|_H<\delta_0\), one has
\[
|\mathcal{C}(f)-\mathcal{C}(g)|\le L\|f-g\|_H.
\]
\end{definition}

\noindent
Since cyclicity is equivalent to \(\mathcal{C}(f)=0\) (see Theorem~\ref{thm:char}), Lipschitz stability implies that if \(f\) is cyclic and \(\|f-g\|_H\) is sufficiently small, then \(\mathcal{C}(g)\) remains close to zero. In particular, \(g\) will also be cyclic.\\

\noindent
The following lemma provides an estimate on the perturbation of multiplier approximations.

\begin{lemma}[Perturbation Estimate]\label{lem:perturb_est}
Let \(f,g\in H = B^N_\omega\) and assume that for some \(\varphi\in \operatorname{Mult}(H)\) with \(\|\varphi\|_{\operatorname{Mult}(H)} \le M\) one has
\[
\|1-\varphi f\|_H \le \varepsilon.
\]
Then, for any \(g\) satisfying
\[
\|f-g\|_H \le \delta,
\]
it follows that
\[
\|1-\varphi g\|_H \le \varepsilon + M\delta.
\]
\end{lemma}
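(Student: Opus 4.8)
The plan is to prove Lemma~\ref{lem:perturb_est} by a single application of the triangle inequality in the Hilbert space $H$, exactly in the spirit of the perturbation bound already used inside the proofs of Theorem~\ref{thm:stability} and Lemma~\ref{lem:cyc_prop}(iii). First I would write
\[
1-\varphi g = (1-\varphi f) + \varphi(f-g),
\]
which is the obvious algebraic identity underlying the estimate. Applying the norm and the triangle inequality gives
\[
\|1-\varphi g\|_H \le \|1-\varphi f\|_H + \|\varphi(f-g)\|_H.
\]

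Next I would control the second term using the defining property of the multiplier norm: since $\varphi\in\operatorname{Mult}(H)$, the multiplication operator $M_\varphi$ is bounded on $H$ with operator norm $\|\varphi\|_{\operatorname{Mult}(H)}$, hence
\[
\|\varphi(f-g)\|_H \le \|\varphi\|_{\operatorname{Mult}(H)}\,\|f-g\|_H \le M\delta,
\]
where the last inequality uses the two hypotheses $\|\varphi\|_{\operatorname{Mult}(H)}\le M$ and $\|f-g\|_H\le\delta$. Combining this with the hypothesis $\|1-\varphi f\|_H\le\varepsilon$ yields $\|1-\varphi g\|_H \le \varepsilon + M\delta$, which is the claim.

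There is essentially no obstacle here: the statement is a routine consequence of the triangle inequality together with the definition of the multiplier norm, and it requires no special structure of $B^N_\omega$ beyond the fact that it is a Hilbert function space on which multipliers act boundedly. The only point worth stating carefully is that the same $\varphi$ is used for both $f$ and $g$ — the lemma is a deterministic estimate for a fixed multiplier, not an infimum statement — so no passage to the infimum (and hence no subtlety about whether the infimum is attained) is needed. This lemma is then exactly the ingredient that, upon taking the infimum over $\varphi$, gives the Lipschitz stability of $\mathcal{C}$ and the perturbation-threshold statements in the surrounding text.
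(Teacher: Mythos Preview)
Your proof is correct and follows essentially the same approach as the paper: write $1-\varphi g=(1-\varphi f)+\varphi(f-g)$, apply the triangle inequality, and bound $\|\varphi(f-g)\|_H$ by $\|\varphi\|_{\operatorname{Mult}(H)}\|f-g\|_H\le M\delta$. The paper's proof is the same argument, phrased with $\varphi f-\varphi g$ as an intermediate step before factoring out $\varphi$, but the content is identical.
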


\begin{proof}
Consider the difference:
\[
1 - \varphi g = (1 - \varphi f) + \varphi f - \varphi g.
\]
Taking the norm in \(H\) and using the triangle inequality, we obtain
\begin{equation} \label{eq:5.4.1}
\|1 - \varphi g\|_H \le \|1 - \varphi f\|_H + \|\varphi f - \varphi g\|_H.
\end{equation}

\noindent
Notice that
\[
\varphi f - \varphi g = \varphi (f - g).
\]
Since \(\varphi\) is a multiplier, its action is bounded. Hence,
\[
\|\varphi (f-g)\|_H \le \|\varphi\|_{\operatorname{Mult}(H)} \|f - g\|_H.
\]
By the hypothesis, \(\|\varphi\|_{\operatorname{Mult}(H)} \le M\) and \(\|f - g\|_H \le \delta\). Therefore,
\begin{equation} \label{eq:5.4.2}
\|\varphi (f-g)\|_H \le M\, \delta.
\end{equation}

\noindent
Substitute the estimate \eqref{eq:5.4.2} into the inequality obtained in \eqref{eq:5.4.1}:
\[
\|1 - \varphi g\|_H \le \|1 - \varphi f\|_H + M \delta.
\]
Since we assumed that \(\|1 - \varphi f\|_H \le \varepsilon\), it follows that
\[
\|1 - \varphi g\|_H \le \varepsilon + M \delta.
\]
This completes the proof.
\end{proof}

\noindent
We now state and prove the main stability result for cyclic functions.

\begin{theorem}[Stability under Function Perturbations]\label{thm:stability_function}
Let \(f\in H = B^N_\omega\) be cyclic, and assume that the cyclicity index \(\mathcal{C}\) is Lipschitz stable at \(f\) with Lipschitz constant \(L>0\). Then there exists a \(\delta_0>0\) such that for every \(g\in H\) satisfying
\[
\|f-g\|_H < \delta_0,
\]
one has
\[
\mathcal{C}(g) \le L \|f-g\|_H.
\]
In particular, if \(\|f-g\|_H < \delta_0\), then \(g\) is cyclic in \(H\).
\end{theorem}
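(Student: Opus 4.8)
The plan is to obtain the quantitative bound directly from the two ingredients already in hand: the characterization ``$f$ cyclic $\iff \mathcal{C}(f)=0$'' from Theorem~\ref{thm:char}, together with the Lipschitz-stability hypothesis. First I would record that since $f$ is cyclic we have $\mathcal{C}(f)=0$. Let $L>0$ and $\delta_0>0$ be the constants furnished by Lipschitz stability at $f$, so that $|\mathcal{C}(f)-\mathcal{C}(g)|\le L\|f-g\|_H$ for every $g\in H$ with $\|f-g\|_H<\delta_0$. Since $\mathcal{C}$ is nonnegative and $\mathcal{C}(f)=0$, for such $g$ this reads $\mathcal{C}(g)=|\mathcal{C}(g)-\mathcal{C}(f)|\le L\|f-g\|_H$, which is the asserted inequality; the $\delta_0$ in the statement is precisely the radius coming from the hypothesis.

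To make the estimate self-contained (and to exhibit an explicit constant), I would then give a second derivation that does not invoke the abstract Lipschitz-stability assumption, at least when $f$ also satisfies the standing hypotheses of Theorem~\ref{thm:char} (nonvanishing on $\mathbb{B}_d$, continuous on $\overline{\mathbb{B}}_d$). In that case part (c) of Theorem~\ref{thm:char} supplies, for each $\varepsilon>0$, a multiplier $\varphi$ with $\|\varphi\|_{\operatorname{Mult}(H)}\le 1$ and $\|1-\varphi f\|_H<\varepsilon$. Applying the Perturbation Estimate (Lemma~\ref{lem:perturb_est}) with $M=1$ and $\delta=\|f-g\|_H$ yields $\|1-\varphi g\|_H<\varepsilon+\|f-g\|_H$; taking the infimum over $\varphi$ and letting $\varepsilon\downarrow 0$ gives $\mathcal{C}(g)\le\|f-g\|_H$ with no restriction on $\|f-g\|_H$. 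This simultaneously shows that $\mathcal{C}$ is globally $1$-Lipschitz, so the Lipschitz-stability hypothesis is in fact automatic with $L=1$, and it dovetails with the lower semicontinuity established in Lemma~\ref{lem:cyc_prop}(iii).

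The genuinely delicate point --- and the step I expect to be the main obstacle --- is the final clause asserting that $g$ is actually cyclic, i.e.\ $\mathcal{C}(g)=0$: the bound $\mathcal{C}(g)\le L\|f-g\|_H$ only forces $\mathcal{C}(g)$ to be small, not to vanish, and in general a small perturbation of a cyclic function need not be cyclic (for instance a perturbation of an outer function in $H^2(\mathbb{D})$ can develop a zero inside $\mathbb{D}$ and thereby lose cyclicity). To salvage exact cyclicity one needs quantitative control near the zero set. The approach I would take is to impose that $f$ is bounded below on $\mathbb{B}_d$ --- equivalently, that $f$ lies in a class to which the one-function corona theorem applies --- so that for $\|f-g\|_H<\delta_0$ small enough $g$ is still bounded below; then Theorem~\ref{thm:corona} gives $1/g\in\operatorname{Mult}(H)$, hence $1=(1/g)\,g\in[g]$ and $g$ is cyclic. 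Under this strengthened hypothesis the theorem as stated holds; I would flag that without it the honest conclusion is the Lipschitz bound on the cyclicity index, with exact cyclicity of $g$ guaranteed only in the corona-stable regime. I would also note that an analogous estimate transfers down the Besov scale via the contractive inclusion $\operatorname{Mult}(B^k_\omega)\subset\operatorname{Mult}(B^{k-1}_\omega)$ of Lemma~\ref{lem:multincl}.
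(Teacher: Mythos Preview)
Your first paragraph is exactly the paper's proof: use $\mathcal{C}(f)=0$ from Theorem~\ref{thm:char}, invoke the Lipschitz-stability hypothesis, and read off $\mathcal{C}(g)\le L\|f-g\|_H$. There is nothing more to the argument in the paper for the inequality itself.

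Your second paragraph goes beyond the paper: the explicit derivation via Theorem~\ref{thm:char}(c) and Lemma~\ref{lem:perturb_est}, yielding a global $1$-Lipschitz bound with no restriction on $\|f-g\|_H$, is not in the paper and is strictly stronger than what is stated.

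Your third paragraph correctly isolates a genuine weakness, and it is worth emphasizing that this weakness is shared by the paper's own proof. The paper disposes of the ``in particular'' clause by writing that small $\mathcal{C}(g)$ ``forces $\mathcal{C}(g)=0$ (or can be made arbitrarily close to zero, which in our setting implies cyclicity),'' without further justification. As you observe, $\mathcal{C}(g)\le L\|f-g\|_H$ being small does not make it vanish, and a small perturbation of a cyclic function can acquire an interior zero and fail to be cyclic. Your proposed rescue via a lower bound on $|f|$ and Theorem~\ref{thm:corona} is a reasonable sufficient condition, and your honest flag that exact cyclicity of $g$ is not a consequence of the stated hypotheses alone is accurate; the paper does not address this point.
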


\begin{proof}
Since \(f\) is cyclic, by the characterization of cyclicity (see Theorem~\ref{thm:char}), we have
\[
\mathcal{C}(f) = 0.
\]
Thus, the Lipschitz condition becomes
\[
|\mathcal{C}(g) - 0| = \mathcal{C}(g) \le L\,\|f-g\|_H.
\]
The above inequality directly implies that for every \(g\) satisfying \(\|f-g\|_H < \delta_0\),
\[
\mathcal{C}(g) \le L\,\|f-g\|_H.
\]
Thus, if we choose \(\delta_0\) so that \(L\,\|f-g\|_H\) is as small as desired, then \(\mathcal{C}(g)\) can be made arbitrarily small.\\

\noindent
Recall from Theorem~\ref{thm:char} that a function \(h\in H\) is cyclic if and only if \(\mathcal{C}(h)=0\). Hence, if \(\|f-g\|_H < \delta_0\) and \(L\,\|f-g\|_H\) is sufficiently small, then \(\mathcal{C}(g)\) is close to zero. By continuity and the definition of the cyclicity index, this forces \(\mathcal{C}(g)=0\) (or can be made arbitrarily close to zero, which in our setting implies cyclicity).\\

\noindent 
Thus, we have shown that there exists a \(\delta_0 > 0\) such that for every \(g\in H\) with \(\|f-g\|_H < \delta_0\), 
\[
\mathcal{C}(g) \le L\,\|f-g\|_H.
\]
In particular, if \(\|f-g\|_H < \delta_0\), then \(\mathcal{C}(g)=0\) (or is arbitrarily small), implying that \(g\) is cyclic in \(H\).\\

\noindent
This completes the proof.
\end{proof}

\noindent
In many scenarios, the weight function \(\omega\) defining the space \(B^N_\omega\) may be subject to variations. We now address this aspect.

\begin{definition}[Perturbed Weight]
Let \(\omega\) and \(\omega'\) be two admissible radial measures on \(\mathbb{B}_d\). We say that \(\omega'\) is a \emph{perturbation} of \(\omega\) if there exists \(\varepsilon>0\) such that for all measurable sets \(E\subset \mathbb{B}_d\),
\[
(1-\varepsilon)\,\omega(E) \le \omega'(E) \le (1+\varepsilon)\,\omega(E).
\]
\end{definition}

\begin{proposition}[Stability under Weight Perturbation]\label{prop:weight_stability}
Let \(f\in B^N_\omega\) be cyclic. If \(\omega'\) is a perturbation of \(\omega\) with parameter \(\varepsilon\) sufficiently small, then \(f\) is cyclic in \(B^N_{\omega'}\). Moreover, there exists a constant \(C>0\) such that
\[
\mathcal{C}_{\omega'}(f) \le C\,\varepsilon,
\]
where \(\mathcal{C}_{\omega'}(f)\) denotes the cyclicity index of \(f\) in the space \(B^N_{\omega'}\).
\end{proposition}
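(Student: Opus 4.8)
The plan is to exploit the fact that the two-sided comparison $(1-\varepsilon)\,\omega(E)\le\omega'(E)\le(1+\varepsilon)\,\omega(E)$ makes the norms of $B^N_\omega$ and $B^N_{\omega'}$ uniformly equivalent, with comparison constants tending to $1$ as $\varepsilon\to 0$, and then to transport a cyclic approximation for $f$ from one space to the other.

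First I would record the norm comparison. A standard layer-cake argument shows that for every nonnegative measurable $h$ on $\mathbb{B}_d$ one has $(1-\varepsilon)\int h\,d\omega\le\int h\,d\omega'\le(1+\varepsilon)\int h\,d\omega$; applying this to $h=|R^Ng|^2$, and using $\omega'(\mathbb{B}_d)\le(1+\varepsilon)\,\omega(\mathbb{B}_d)$ for the point-evaluation term when $N>0$, yields
\[
\sqrt{1-\varepsilon}\,\|g\|_{B^N_\omega}\le\|g\|_{B^N_{\omega'}}\le\sqrt{1+\varepsilon}\,\|g\|_{B^N_\omega}\qquad\text{for all }g\in\operatorname{Hol}(\mathbb{B}_d).
\]
In particular $B^N_\omega$ and $B^N_{\omega'}$ coincide as sets of holomorphic functions (so $f\in B^N_{\omega'}$ and $1\in B^N_{\omega'}$), and, since boundedness of the operator $M_\varphi$ is unaffected by passing to an equivalent norm, $\operatorname{Mult}(B^N_\omega)=\operatorname{Mult}(B^N_{\omega'})$ with $\|\varphi\|_{\operatorname{Mult}(B^N_{\omega'})}\le\sqrt{(1+\varepsilon)/(1-\varepsilon)}\,\|\varphi\|_{\operatorname{Mult}(B^N_\omega)}$. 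This is the step where the hypothesis that $\varepsilon$ is small (in particular $\varepsilon<1$) and the two-sided, not merely one-sided, bound are genuinely used.

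Next I would transport the cyclic approximation. Since $f$ is cyclic in $B^N_\omega$, Theorem~\ref{thm:char} gives $\mathcal{C}_\omega(f)=0$, so for each $\eta>0$ there is $\varphi\in\operatorname{Mult}(B^N_\omega)=\operatorname{Mult}(B^N_{\omega'})$ with $\|1-\varphi f\|_{B^N_\omega}<\eta$. By the upper bound in the norm comparison, $\|1-\varphi f\|_{B^N_{\omega'}}\le\sqrt{1+\varepsilon}\,\|1-\varphi f\|_{B^N_\omega}<\sqrt{1+\varepsilon}\,\eta$; letting $\eta\to0$ shows $\mathcal{C}_{\omega'}(f)=0$, i.e. $f$ is cyclic in $B^N_{\omega'}$, whence the estimate $\mathcal{C}_{\omega'}(f)\le C\varepsilon$ holds trivially for any $C>0$. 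If instead one wants the bound to carry content, I would phrase it through a fixed near-optimal multiplier: choosing $\varphi_0$ with $\|1-\varphi_0 f\|_{B^N_\omega}\le\eta_0$ gives $\|1-\varphi_0 f\|_{B^N_{\omega'}}\le(1+\varepsilon)\,\eta_0$, so passing to the perturbed weight degrades the achievable approximation error by the factor $1+\varepsilon$, i.e. by an additive amount $O(\varepsilon)$, which is the substance behind the ``$\le C\varepsilon$'' statement.

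The main obstacle is bookkeeping rather than analysis: one must check carefully that the two multiplier algebras genuinely agree as sets with comparable norms, so that the multipliers produced for $\omega$ really act boundedly on $B^N_{\omega'}$, and — when $N>0$ — that the term $\omega(\mathbb{B}_d)|g(0)|^2$ is controlled on exactly the same footing as the $L^2(\omega)$ term in the norm comparison. No deeper difficulty arises; indeed, the conclusion that $f$ remains exactly cyclic (not merely approximately so) is stronger than the displayed inequality.
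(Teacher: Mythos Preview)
Your proposal is correct and follows essentially the same approach as the paper: both arguments rest on the norm equivalence between $B^N_\omega$ and $B^N_{\omega'}$ induced by the two-sided measure bound, and then transport a cyclic approximation $\varphi f\approx 1$ from the $\omega$-norm to the $\omega'$-norm. You are in fact more explicit than the paper on several points---the precise comparison constants $\sqrt{1\pm\varepsilon}$, the equality $\operatorname{Mult}(B^N_\omega)=\operatorname{Mult}(B^N_{\omega'})$ via equivalence of norms, the handling of the $|g(0)|^2$ term when $N>0$---and your observation that the argument actually yields $\mathcal{C}_{\omega'}(f)=0$ exactly (so that the displayed inequality $\mathcal{C}_{\omega'}(f)\le C\varepsilon$ is trivially satisfied) is sharper than the paper's own phrasing.
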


\begin{proof}
Since \(f\) is cyclic in \(B^N_\omega\), by the definition of the cyclicity index, for every \(\delta>0\) there exists \(\varphi\in \operatorname{Mult}(B^N_\omega)\) such that
\[
\|1-\varphi f\|_{B^N_\omega} < \delta.
\]
By the norm equivalence, for the same function \(1-\varphi f\) we have
\[
\|1-\varphi f\|_{B^N_{\omega'}} \le C_1\, \|1-\varphi f\|_{B^N_\omega} < C_1\, \delta.
\]
Recall that the cyclicity index in \(B^N_{\omega'}\) is defined as
\[
\mathcal{C}_{\omega'}(f) = \inf\{ \|1-\psi f\|_{B^N_{\omega'}} : \psi\in \operatorname{Mult}(B^N_{\omega'})\}.
\]
Since the polynomials (and more generally the multipliers) are dense in both spaces and the multiplier algebras for \(B^N_\omega\) and \(B^N_{\omega'}\) are equivalent (up to constants that depend on \(\varepsilon\)), the existence of a \(\varphi\) in \(\operatorname{Mult}(B^N_\omega)\) satisfying
\[
\|1-\varphi f\|_{B^N_\omega} < \delta,
\]
implies that there exists a corresponding multiplier (or the same function, viewed in the perturbed setting) in \(\operatorname{Mult}(B^N_{\omega'})\) for which
\[
\|1-\varphi f\|_{B^N_{\omega'}} < C_1\, \delta.
\]
Taking the infimum over all such \(\varphi\) yields
\[
\mathcal{C}_{\omega'}(f) \le C_1\, \delta.
\]
Since the above estimate holds for every \(\delta>0\) (by choosing a corresponding \(\varphi\) for which \(\|1-\varphi f\|_{B^N_\omega} < \delta\)), we can make \(\mathcal{C}_{\omega'}(f)\) arbitrarily small. In particular, we obtain that
\[
\mathcal{C}_{\omega'}(f) \le C\,\varepsilon,
\]
for some constant \(C>0\) that depends on \(\varepsilon\) (and in fact on the norm equivalence constant \(C_1\)). By Theorem~\ref{thm:char}, a function \(f\) is cyclic in a space if and only if its cyclicity index is zero. Hence, if \(\mathcal{C}_{\omega'}(f)\) can be made arbitrarily small, it follows that \(f\) is cyclic in \(B^N_{\omega'}\).\\

\noindent
This completes the proof.
\end{proof}

\noindent
The above proposition implies that cyclicity is a stable property under small changes in the underlying measure. This observation is particularly relevant in applications where the weight arises from physical or probabilistic models that may be subject to uncertainty.

\section{Geometric Analysis of Zero Sets and Cyclicity}\label{sec:geomzero}

The interplay between cyclicity and the geometric properties of zero sets has been a central theme in function theory and operator theory. In this section, we investigate how the structure of zero sets—both in the interior and on the boundary of \(\mathbb{B}_d\)—affects cyclicity in weighted Besov spaces \( B^N_\omega \). We introduce novel criteria based on Hausdorff dimension and capacity that provide refined characterizations of cyclic functions.

\begin{definition}[Hausdorff Dimension of Zero Sets]
For a function \( f \in B^N_\omega \), define its \emph{boundary zero set} as
\[
Z_{\partial}(f) = \{ z \in \partial\mathbb{B}_d : f(z) = 0 \}.
\]
The \emph{Hausdorff dimension} of \( Z_{\partial}(f) \), denoted \(\dim_H (Z_{\partial}(f))\), is the infimum of all \( s \geq 0 \) such that the \( s \)-dimensional Hausdorff measure \( \mathcal{H}^s(Z_{\partial}(f)) \) is zero.
\end{definition}

\begin{theorem}[Hausdorff Dimension and Cyclicity Criterion]\label{thm:hausdorff_cyclicity}
Let \( f \in B^N_\omega \) be a function with a nontrivial boundary zero set \( Z_{\partial}(f) \). If
\[
\dim_H (Z_{\partial}(f)) \geq d - N,
\]
then \( f \) is not cyclic in \( B^N_\omega \).
\end{theorem}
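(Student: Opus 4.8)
\medskip
\noindent\textbf{Proof strategy for Theorem~\ref{thm:hausdorff_cyclicity}.}
The plan is to convert the hypothesis on $\dim_H(Z_\partial(f))$ into a positive–capacity statement and then quote the capacity lower bound of Proposition~\ref{prop:cap_bound} together with the characterization in Theorem~\ref{thm:char}. First I would dispose of the trivial case in which $f(z_0)=0$ for some interior point $z_0\in\mathbb{B}_d$: then $[f]\subseteq\{h\in H:h(z_0)=0\}$, a proper closed subspace of $H$ (evaluation at $z_0$ is continuous, and $1$ is not in it), so $f$ is not cyclic and there is nothing to prove. Hence I may assume $f$ is zero-free on $\mathbb{B}_d$, which is exactly the standing hypothesis under which Definition~\ref{def:cap} and Proposition~\ref{prop:cap_bound} apply (with the continuity on $\overline{\mathbb{B}}_d$ either built into the class of functions under consideration or replaced, as discussed below, by the $\operatorname{Cap}_\alpha$-quasi-everywhere defined boundary trace of $f$).

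Next I would supply the potential-theoretic input. The capacity $\operatorname{Cap}=\operatorname{Cap}_\alpha$ attached to $B^N_\omega$ in Definition~\ref{def:cap} is the Riesz/Bessel-type capacity on $\partial\mathbb{B}_d$ whose \emph{critical dimension} — the threshold below which every set is $\operatorname{Cap}_\alpha$-null and above which sets of positive Hausdorff measure are non-null — is precisely $d-N$; this is the trace capacity natural to a space of ``smoothness $N$'', and for $d=1$ it reduces to arclength when $N=0$ and to logarithmic capacity when $N=1$. By Frostman's lemma, $\dim_H(Z_\partial(f))\ge d-N$ implies that for every $s<d-N$ the set $Z_\partial(f)$ carries a probability measure $\mu$ with finite Riesz $s$-energy $\iint |x-y|^{-s}\,d\mu(x)\,d\mu(y)<\infty$; taking $s$ close enough to $d-N$ this finite-energy measure witnesses $\operatorname{Cap}(f)=\operatorname{Cap}_\alpha\bigl(Z_\partial(f)\bigr)>0$. (In the borderline case $\dim_H=d-N$ with vanishing critical Hausdorff measure one either argues with the Hausdorff-content/energy estimate directly, or notes that the hypothesis is applied with $\mathcal{H}^{\,d-N}(Z_\partial(f))>0$.)

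With positive boundary capacity established, Proposition~\ref{prop:cap_bound} yields $\mathcal{C}(f)\ge C\,\operatorname{Cap}(f)>0$, and then the equivalence $(a)\Leftrightarrow(b)$ of Theorem~\ref{thm:char} forces $f$ to be non-cyclic in $B^N_\omega$, completing the argument modulo the two analytic inputs above.

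The main obstacle is the precise identification of the capacity: one must pin down the exponent $\alpha=\alpha(N,d,\omega)$ so that the critical dimension of $\operatorname{Cap}_\alpha$ on $\partial\mathbb{B}_d$ is exactly $d-N$, and verify that this is indeed the capacity through which the obstruction mechanism of Proposition~\ref{prop:cap_bound} operates — namely that norm convergence $\varphi_n f\to 1$ in $B^N_\omega$ forces the boundary trace of the limit to vanish $\operatorname{Cap}_\alpha$-quasi-everywhere on $Z_\partial(f)$, which is incompatible with $1\equiv 1$. A secondary technical point is reconciling the smoothness/continuity hypotheses of Proposition~\ref{prop:cap_bound} with a general $f\in B^N_\omega$; this is handled by passing to the quasi-everywhere-defined boundary values of $f$ and using that multipliers are bounded — by Lemma~\ref{lem:multincl}, $\operatorname{Mult}(B^N_\omega)\subset\operatorname{Mult}(B^0_\omega)\subset H^\infty(\mathbb{B}_d)$ — so that $\varphi_n f$ still vanishes wherever $f$ does.
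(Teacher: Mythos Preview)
Your proposal is sound and arrives at the same conclusion, but the route differs from the paper's. The paper gives a direct contradiction argument without ever invoking the cyclicity index: assuming $f$ is cyclic, it takes a sequence $\varphi_n f\to 1$ in $B^N_\omega$, asserts (citing potential theory) that $\dim_H(Z_\partial(f))\ge d-N$ guarantees a nontrivial Borel measure $\mu$ supported on $E=Z_\partial(f)$ for which the functional $\Lambda(h)=\int_{\partial\mathbb{B}_d}h\,d\mu$ is bounded on $B^N_\omega$, and then reads off the contradiction $0=\Lambda(\varphi_n f)\to\Lambda(1)=\mu(E)>0$. You instead factor the argument through the paper's own quantitative framework: Frostman gives a finite-energy measure witnessing $\operatorname{Cap}_\alpha(Z_\partial(f))>0$, then Proposition~\ref{prop:cap_bound} forces $\mathcal{C}(f)>0$, and Theorem~\ref{thm:char} finishes. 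Your path is more modular and makes explicit (via Frostman and the critical-dimension discussion) the step the paper compresses into ``potential theory guarantees''; the paper's direct functional argument is shorter and avoids the extra layer of the cyclicity index. The technical caveat you flag---matching $\alpha$ so that the critical dimension is exactly $d-N$, and handling the continuity hypothesis of Proposition~\ref{prop:cap_bound}---is equally present (and equally unresolved) in the paper's own proof, so you are not introducing a new gap.
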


\begin{proof} 
Since \(f\) is cyclic in \(B^N_\omega\), by definition there exists a sequence of multipliers \(\{\varphi_n\} \subset \operatorname{Mult}(B^N_\omega)\) such that 
\[
\varphi_n f \to 1 \quad \text{in } B^N_\omega.
\]
In particular, the constant function \(1\) is in the closure of the invariant subspace generated by \(f\).\\

\noindent
Because \(f\) is assumed to extend continuously to \(\overline{\mathbb{B}}_d\), its boundary values are well defined. In particular, on the set 
\[
E = Z_{\partial}(f) \subset \partial\mathbb{B}_d,
\]
we have 
\[
f(z)=0 \quad \text{for all } z \in E.
\]
Thus, for any multiplier \(\varphi \in \operatorname{Mult}(B^N_\omega)\) the product \(\varphi f\) will also vanish on \(E\):
\[
(\varphi f)(z)=0 \quad \text{for all } z \in E.
\]
Since the Hausdorff dimension of \(E\) satisfies 
\[
\dim_H(E) \ge d - N,
\]
potential theory (see, e.g., \cite{Carleson1962, Garnett2007}) guarantees that there exists a nontrivial finite Borel measure \(\mu\) supported on \(E\) such that the linear functional
\[
\Lambda: h \mapsto \int_{\partial \mathbb{B}_d} h(z)\,d\mu(z)
\]
is bounded on \(B^N_\omega\). In other words, there exists a constant \(C_\mu>0\) such that
\[
\left|\Lambda(h)\right| \le C_\mu \|h\|_{B^N_\omega} \quad \text{for all } h\in B^N_\omega.
\] 
For each \(n\), since \((\varphi_n f)(z)=0\) for all \(z\in E\), we have
\[
\Lambda(\varphi_n f) = \int_E (\varphi_n f)(z)\,d\mu(z) = 0.
\]
On the other hand, the convergence \(\varphi_n f \to 1\) in \(B^N_\omega\) implies, by the continuity of \(\Lambda\), that
\[
\Lambda(\varphi_n f) \to \Lambda(1) = \int_E 1\,d\mu(z) = \mu(E).
\]
Since \(\mu\) is nontrivial and supported on \(E\), we have \(\mu(E)>0\). This yields a contradiction because we would then have
\[
0 = \lim_{n\to\infty} \Lambda(\varphi_n f) = \mu(E) > 0.
\]
The contradiction arises from our assumption that \(f\) is cyclic in \(B^N_\omega\) while its boundary zero set \(E\) has sufficiently large Hausdorff dimension. Therefore, it must be that if 
\[
\dim_H \bigl(Z_{\partial}(f)\bigr) \ge d-N,
\]
then \(f\) is not cyclic in \(B^N_\omega\).\\

\noindent
This completes the proof.
\end{proof}

\noindent
The condition \( \dim_H (Z_{\partial}(f)) \geq d - N \) provides a sharp boundary for when a function ceases to be cyclic. If \( Z_{\partial}(f) \) is smaller than this threshold, cyclicity may still hold, depending on finer properties of \( f \).

\begin{definition}[Capacity of Zero Sets]
Let \( Z_{\partial}(f) \) be the boundary zero set of \( f \). Define its \( \alpha \)-capacity as
\[
\operatorname{Cap}_\alpha(Z_{\partial}(f)) = \inf \left\{ \int_{\partial\mathbb{B}_d} g^\alpha d\sigma : g \geq 0,\, g \in C(\partial\mathbb{B}_d),\, g \geq 1 \text{ on } Z_{\partial}(f) \right\}.
\]
\end{definition}

\begin{theorem}[Capacity Obstruction to Cyclicity]\label{thm:capacity}
If \( \operatorname{Cap}_\alpha(Z_{\partial}(f)) > 0 \) for some \( \alpha > 0 \), then \( f \) is not cyclic in \( B^N_\omega \).
\end{theorem}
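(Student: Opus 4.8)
The plan is to argue by contradiction, following the template of the proof of Theorem~\ref{thm:hausdorff_cyclicity}, and to reduce the capacity hypothesis to the existence of a bounded annihilating functional. First I would suppose that $f$ is cyclic in $B^N_\omega$, so that there is a sequence $\{\varphi_n\}\subset\operatorname{Mult}(B^N_\omega)$ with $\varphi_n f\to 1$ in $B^N_\omega$; as in the earlier theorem, each product $\varphi_n f$ vanishes on the boundary zero set $E=Z_\partial(f)$ since $f$ vanishes there and $\varphi_n$ is bounded on $\overline{\mathbb{B}}_d$ (using the continuous extension assumption carried over from Definition~\ref{def:cap}). The core of the argument is then to produce, from the hypothesis $\operatorname{Cap}_\alpha(E)>0$, a nontrivial finite Borel measure $\mu$ supported on $E$ that induces a bounded linear functional $\Lambda(h)=\int_{\partial\mathbb{B}_d}h\,d\mu$ on $B^N_\omega$. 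Given such a $\mu$, the contradiction is immediate and identical to the one in Theorem~\ref{thm:hausdorff_cyclicity}: $\Lambda(\varphi_n f)=0$ for all $n$ because $\varphi_n f\equiv 0$ on $E$, while continuity of $\Lambda$ forces $\Lambda(\varphi_n f)\to\Lambda(1)=\mu(E)>0$.

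The key steps, in order, are: (1) translate positivity of the $\alpha$-capacity into the existence of an equilibrium (or extremal) measure $\mu$ on $E$ with finite energy, via the standard duality between capacity and energy-minimizing measures (see \cite{Carleson1962, Garnett2007}); (2) verify that finite $\alpha$-energy of $\mu$ implies that the potential $U^\mu$ lies in the relevant dual space, so that integration against $\mu$ extends to a bounded functional on $B^N_\omega$ — this is where one uses that $B^N_\omega$ embeds into a space on which such potentials act boundedly, i.e.\ that the capacity parameter $\alpha$ is calibrated to $N$ and $d$; (3) observe $\mu(E)>0$ since a measure of positive capacity-mass is nontrivial; (4) run the annihilation-versus-convergence contradiction exactly as before. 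One may alternatively invoke Proposition~\ref{prop:cap_bound} directly: if the $\alpha$-capacity is the same capacity appearing there, then $\mathcal{C}(f)\ge C\operatorname{Cap}(f)>0$, and Theorem~\ref{thm:char} gives non-cyclicity in one line; I would mention this shortcut but present the self-contained duality argument as the main proof.

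The main obstacle is step~(2): making rigorous the claim that a measure of positive $\alpha$-capacity on $\partial\mathbb{B}_d$ actually yields a functional that is \emph{bounded on} $B^N_\omega$. This requires knowing the precise relationship between the weight $\omega$, the order $N$, and the capacity exponent $\alpha$ — essentially a trace/restriction theorem for $B^N_\omega$ onto boundary sets, identifying the sharp capacity that governs which sets carry bounded point-evaluation-type functionals. In the generality stated (arbitrary admissible radial $\omega$, unspecified $\alpha$), this is genuinely delicate, and I expect the honest version of the proof either to restrict $\alpha$ to the Bessel/Riesz capacity naturally associated to $B^N_\omega$, or to invoke a known embedding theorem from the potential-theory literature; the remaining bookkeeping (the contradiction, the vanishing of $\Lambda$ on $\varphi_n f$) is routine. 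I would therefore structure the writeup so that the capacity-to-functional implication is stated as the one substantive lemma, with the rest following the now-familiar pattern.
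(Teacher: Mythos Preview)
Your proposal is correct and follows essentially the same approach as the paper: assume cyclicity, pick approximants $\varphi_n f\to 1$, note they vanish on $E=Z_\partial(f)$, use positive $\alpha$-capacity to produce a nontrivial measure $\mu$ on $E$ giving a bounded functional $\Lambda$ on $B^N_\omega$, and derive the contradiction $0=\Lambda(\varphi_n f)\to\mu(E)>0$. The paper's proof is in fact less detailed than your outline on the delicate point you flag as step~(2) --- it simply cites \cite{Carleson1962, Garnett2007} for the existence of $\mu$ and the boundedness of $\Lambda$ without calibrating $\alpha$ to $N$ and $\omega$ --- so your identification of that as the substantive lemma, and your mention of the Proposition~\ref{prop:cap_bound} shortcut, are both apt.
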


\begin{proof}
Since \(f\) is cyclic, by definition there exists a sequence of multipliers \(\{\varphi_n\} \subset \operatorname{Mult}(B^N_\omega)\) such that 
\[
\varphi_n f \to 1 \quad \text{in } B^N_\omega.
\]
In other words, the constant function \(1\) can be approximated arbitrarily well by functions of the form \(\varphi_n f\).\\

\noindent
Assume that \(f\) extends continuously to \(\overline{\mathbb{B}}_d\) (or at least has well-defined boundary values) so that its boundary zero set
\[
Z_{\partial}(f) = \{ z \in \partial\mathbb{B}_d : f(z) = 0 \}
\]
is meaningful. Then, for any multiplier \(\varphi \in \operatorname{Mult}(B^N_\omega)\) and for every \(z\in Z_{\partial}(f)\), we have
\[
(\varphi f)(z) = \varphi(z) f(z) = 0.
\]
Thus, each approximant \(\varphi_n f\) vanishes on \(Z_{\partial}(f)\).\\

\noindent
The assumption \(\operatorname{Cap}_\alpha(Z_{\partial}(f)) > 0\) implies (by potential-theoretic arguments; see \cite{Carleson1962, Garnett2007}) that there exists a nontrivial finite Borel measure \(\mu\) supported on \(Z_{\partial}(f)\) that “detects” the size of \(Z_{\partial}(f)\) in a quantitative way. More precisely, there is a constant \(C_\mu > 0\) such that the linear functional
\[
\Lambda(h) = \int_{\partial \mathbb{B}_d} h(z)\,d\mu(z)
\]
is bounded on \(B^N_\omega\); that is,
\[
|\Lambda(h)| \le C_\mu \|h\|_{B^N_\omega} \quad \text{for all } h\in B^N_\omega.
\] 
Since \(\varphi_n f \to 1\) in \(B^N_\omega\), by the continuity of \(\Lambda\) we have
\[
\Lambda(\varphi_n f) \to \Lambda(1) = \int_{\partial \mathbb{B}_d} 1\,d\mu(z) = \mu\bigl(Z_{\partial}(f)\bigr) > 0.
\]
However, for each \(n\), because \(\varphi_n f\) vanishes on \(Z_{\partial}(f)\), it follows that
\[
\Lambda(\varphi_n f) = \int_{Z_{\partial}(f)} (\varphi_n f)(z)\,d\mu(z) = 0.
\] 
The contradiction arises because on one hand the sequence \(\Lambda(\varphi_n f)\) must converge to a positive number (namely, \(\mu(Z_{\partial}(f)) > 0\)), while on the other hand each term of the sequence is identically zero. Therefore, no such sequence \(\{\varphi_n\}\) can exist, which contradicts the assumption that \(f\) is cyclic.\\

\noindent
Since the assumption of cyclicity leads to a contradiction when \(\operatorname{Cap}_\alpha(Z_{\partial}(f)) > 0\), we conclude that if the capacity of \(Z_{\partial}(f)\) is nontrivial, then \(f\) cannot be cyclic in \(B^N_\omega\).\\

\noindent
This completes the proof.
\end{proof}

\begin{corollary}
If \( Z_{\partial}(f) \) contains an embedded real \( k \)-dimensional cube with \( k \geq 3 \), then \( f \) is not cyclic.
\end{corollary}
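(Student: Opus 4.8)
The plan is to reduce the corollary to the capacity obstruction already established in Theorem~\ref{thm:capacity}. The only thing that needs to be checked is that an embedded real $k$-cube with $k \ge 3$ forces the boundary zero set $Z_{\partial}(f)$ to have strictly positive $\alpha$-capacity for some $\alpha > 0$; once that is known, Theorem~\ref{thm:capacity} applies verbatim and yields that $f$ is not cyclic in $B^N_\omega$.

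First I would record the monotonicity of the set function $\operatorname{Cap}_\alpha$: if $E \subseteq E'$ are compact subsets of $\partial\mathbb{B}_d$, then any $g$ admissible for $E'$ (that is, $g \ge 1$ on $E'$) is admissible for $E$ as well, so the infimum defining $\operatorname{Cap}_\alpha(E)$ ranges over a larger admissible class, whence $\operatorname{Cap}_\alpha(E) \le \operatorname{Cap}_\alpha(E')$. Applying this with $E = Q$, the embedded $k$-cube, and $E' = Z_{\partial}(f) \supseteq Q$, it suffices to prove $\operatorname{Cap}_\alpha(Q) > 0$ for a suitable $\alpha > 0$.

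Next I would bound the capacity of the cube from below. An embedded real $k$-cube is the bi-Lipschitz image of $[0,1]^k$ inside $\partial\mathbb{B}_d$, hence Ahlfors $k$-regular: equipping $Q$ with the normalized $k$-dimensional Hausdorff measure $\mu$ one has $\mu(B(x,r)\cap Q) \lesssim r^k$ uniformly in $x$ and $r$. A dyadic decomposition of the energy integral in the distance variable then shows that $\iint |x-y|^{-\alpha}\,d\mu(x)\,d\mu(y) < \infty$ for every $\alpha < k$; since $k \ge 3$ there is ample room to choose such an $\alpha > 0$ that is also compatible with the kernel governing $B^N_\omega$. By the standard equivalence between the existence of a probability measure of finite $\alpha$-energy and positivity of the corresponding capacity (see \cite{Carleson1962, Garnett2007}), this gives $\operatorname{Cap}_\alpha(Q) > 0$, hence $\operatorname{Cap}_\alpha(Z_{\partial}(f)) > 0$, and Theorem~\ref{thm:capacity} finishes the argument. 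When $d - N \le k$ one can alternatively invoke Theorem~\ref{thm:hausdorff_cyclicity} directly, since then $\dim_H Z_{\partial}(f) \ge k \ge d - N$.

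The step I expect to be the main obstacle is making the capacity comparison precise: one must identify exactly which $\alpha$-capacity enters Theorem~\ref{thm:capacity}, verify that it is bounded below by a constant multiple of the Riesz-type $\alpha$-capacity appearing in the energy computation, and check that the bi-Lipschitz embedding of the flat cube into the sphere distorts energies only by bounded factors, so positivity is preserved. The role of the hypothesis $k \ge 3$ is precisely to land strictly above the critical exponent at which cubes first acquire positive capacity for the relevant kernel, guaranteeing that such an $\alpha$ exists.
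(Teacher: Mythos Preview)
Your proposal is correct and follows essentially the same approach as the paper: reduce to Theorem~\ref{thm:capacity} by showing that an embedded real $k$-cube with $k\ge 3$ has positive $\alpha$-capacity for some $\alpha>0$, and then invoke monotonicity to conclude $\operatorname{Cap}_\alpha(Z_\partial(f))>0$. The paper's proof simply cites this positivity as a classical potential-theoretic fact (referring to \cite{Carleson1962, Sola2020}), whereas you flesh out the details via Ahlfors regularity of the cube and finiteness of the Riesz energy; your added remarks on monotonicity and bi-Lipschitz invariance, and the alternative route through Theorem~\ref{thm:hausdorff_cyclicity}, are welcome but not strictly needed beyond what the paper asserts.
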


\begin{proof}
By classical results in potential theory, any real cube of dimension at least \( 3 \) has positive \( \alpha \)-capacity for some \( \alpha \) (see \cite{Carleson1962, Sola2020}). The result follows from Theorem~\ref{thm:capacity}.
\end{proof}

\noindent
The capacity obstruction provides a more refined obstruction than Hausdorff dimension alone. For example, there exist sets of small Hausdorff dimension that still have positive capacity and thus prevent cyclicity.

\begin{definition}[Interior Zero Set]
Define the \emph{interior zero set} of \( f \) as
\[
Z_{\text{int}}(f) = \{ z \in \mathbb{B}_d : f(z) = 0 \}.
\]
\end{definition}

\begin{proposition}[Interior Zero Sets and Cyclicity]
Let \( f \in B^N_\omega \). If \( Z_{\text{int}}(f) \) contains a nontrivial analytic variety of positive dimension, then \( f \) is not cyclic.
\end{proposition}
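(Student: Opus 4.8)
The plan is to argue by contradiction, exploiting that $H = B^N_\omega$ is a Hilbert function space — so that norm convergence forces pointwise convergence on $\mathbb{B}_d$ — together with the elementary fact that multiplying $f$ by a holomorphic multiplier cannot remove an interior zero. Concretely, I would suppose $f$ were cyclic in $H$. Since $\operatorname{Mult}(H)\subset H\subset\operatorname{Hol}(\mathbb{B}_d)$ and $1\in H$, cyclicity provides a sequence $\{\varphi_n\}\subset\operatorname{Mult}(H)$ with $\varphi_n f\to 1$ in $H$; boundedness of the evaluation functionals $h\mapsto h(z)$, $z\in\mathbb{B}_d$, then yields $(\varphi_n f)(z)\to 1$ for every fixed $z\in\mathbb{B}_d$.

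Next I would pick any point $z_0$ in the hypothesized variety $V\subseteq Z_{\mathrm{int}}(f)$; positive-dimensionality guarantees $V\neq\emptyset$, and $z_0\in\mathbb{B}_d$. Because $f$ and each $\varphi_n$ are holomorphic on $\mathbb{B}_d$ and $f(z_0)=0$, each product vanishes there: $(\varphi_n f)(z_0)=\varphi_n(z_0)f(z_0)=0$ for all $n$. Letting $n\to\infty$ forces $1=0$, the desired contradiction; hence $f$ is not cyclic.

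For a quantitative refinement paralleling Theorems~\ref{thm:hausdorff_cyclicity} and~\ref{thm:capacity}, I would instead choose a nontrivial finite Borel measure $\mu$ supported on $V$ for which $\Lambda(h)=\int_V h\,d\mu$ is bounded on $H$; then every $\varphi_n f$ annihilates $\mu$ while $\Lambda(\varphi_n f)\to\Lambda(1)=\mu(V)>0$, and since $|\Lambda(1-\varphi f)|\le\|\Lambda\|_{H^*}\,\|1-\varphi f\|_H$ one even obtains $\mathcal{C}(f)\ge \mu(V)/\|\Lambda\|_{H^*}>0$. One can also record that the restriction map $h\mapsto h|_V$ sends $H$ onto an infinite-dimensional space, so $[f]$ has infinite codimension.

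I do not expect a real obstacle for the non-cyclicity statement itself: the key point is just that point evaluations are bounded, and in fact a single interior zero already suffices (consistent with Lemma~\ref{lem:cyclic}(a)), so the positive-dimensionality hypothesis is stronger than strictly necessary — I would keep it only because it is the natural companion to the boundary zero-set results and it yields the codimension conclusion. The only step demanding genuine care is in the quantitative/duality version: verifying that a measure spread over $V$ (rather than a point mass) induces a bounded linear functional on $B^N_\omega$, which is precisely where a dimension- or capacity-type hypothesis on $V$ relative to $N$ and $d$ would have to be invoked.
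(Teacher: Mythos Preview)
Your proposal is correct and follows essentially the same approach as the paper: argue by contradiction, note that every product $\varphi_n f$ vanishes on the variety $V$, and use that norm convergence $\varphi_n f\to 1$ in $H$ forces pointwise convergence, giving $0=1$. If anything, your justification is tighter than the paper's---you invoke boundedness of point evaluations (the defining property of a Hilbert function space) to get pointwise convergence at \emph{every} $z\in\mathbb{B}_d$, whereas the paper phrases this step more loosely as ``pointwise convergence on a dense subset''; and you correctly observe that a single interior zero already suffices, which the paper's argument in fact uses without making explicit.
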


\begin{proof}
Assume, for the sake of contradiction, that \(f \in B^N_\omega\) is cyclic despite the fact that its interior zero set
\[
Z_{\text{int}}(f) = \{z\in \mathbb{B}_d : f(z)=0\}
\]
contains a nontrivial analytic variety \(V\) of positive dimension.\\

\noindent
By assumption, \(V \subset Z_{\text{int}}(f)\) is a nontrivial analytic variety, meaning that \(V\) has positive (complex) dimension (or equivalently, its real dimension is at least 2). In particular, \(V\) is not a discrete set and, as an analytic set, it is locally defined by the vanishing of one or more holomorphic functions.\\

\noindent
Let \(\varphi\in \operatorname{Mult}(B^N_\omega)\) be any multiplier. Since \(\varphi\) is holomorphic on \(\mathbb{B}_d\), the product \(\varphi f\) is also holomorphic. However, since \(f\) vanishes on \(V\) (i.e., \(f(z)=0\) for all \(z \in V\)), it follows that for every \(z\in V\),
\begin{equation} \label{eq:6.9}
(\varphi f)(z) = \varphi(z) f(z) = \varphi(z) \cdot 0 = 0.
\end{equation}
Thus, regardless of the choice of \(\varphi\), the product \(\varphi f\) must vanish on the entire variety \(V\).\\

\noindent
Recall that \(f\) being cyclic in \(B^N_\omega\) means that there exists a sequence of multipliers \(\{\varphi_n\} \subset \operatorname{Mult}(B^N_\omega)\) such that
\[
\varphi_n f \to 1 \quad \text{in } B^N_\omega.
\]
In particular, this convergence is in the norm of \(B^N_\omega\), and therefore, it must also imply pointwise convergence on a dense subset of \(\mathbb{B}_d\).\\

\noindent 
Since each function \(\varphi_n f\) vanishes on \(V\) by \eqref{eq:6.9}, for every \(z\in V\) we have
\[
(\varphi_n f)(z) = 0 \quad \text{for all } n.
\]
Thus, the sequence \(\{\varphi_n f\}\) cannot converge pointwise to the constant function \(1\) on \(V\), because at every point \(z\in V\) we would have
\[
\lim_{n\to\infty} (\varphi_n f)(z) = 0 \neq 1.
\]
This contradicts the requirement for cyclicity that \(1\) is in the closure of the set \(\{\varphi f: \varphi\in \operatorname{Mult}(B^N_\omega)\}\).\\

\noindent 
The contradiction shows that if \(Z_{\text{int}}(f)\) contains a nontrivial analytic variety \(V\) of positive dimension, then it is impossible for any sequence of multipliers to approximate \(1\) in \(B^N_\omega\). Therefore, \(f\) cannot be cyclic.\\

\noindent
This completes the proof.
\end{proof}

\noindent
This result generalizes the well-known fact that inner functions are never cyclic in Hardy spaces. The obstruction here is that nontrivial interior zeros create invariant subspaces that prevent \( f \) from generating all of \( B^N_\omega \).

\begin{theorem}[Sufficient Geometric Condition for Cyclicity]\label{thm:sufficient_cyclicity}
Let \( f \in B^N_\omega \) be such that its boundary zero set satisfies
\[
\dim_H (Z_{\partial}(f)) < d - N \quad \text{and} \quad \operatorname{Cap}_\alpha(Z_{\partial}(f)) = 0.
\]
Then \( f \) is cyclic in \( B^N_\omega \).
\end{theorem}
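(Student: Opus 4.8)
The plan is to verify criterion (b) of Theorem~\ref{thm:char}, i.e.\ \(\mathcal{C}(f)=0\), by producing a sequence \(\{\varphi_n\}\subset\operatorname{Mult}(B^N_\omega)\) with \(\varphi_n f\to 1\) in \(B^N_\omega\); this runs the duality argument of Theorems~\ref{thm:hausdorff_cyclicity} and~\ref{thm:capacity} in reverse. I keep the standing hypotheses of Theorem~\ref{thm:char}, which this statement tacitly inherits: \(f\) has no zeros in \(\mathbb{B}_d\) and extends continuously to \(\overline{\mathbb{B}}_d\), so that \(1/f\) is continuous on \(\overline{\mathbb{B}}_d\setminus E\) with \(E:=Z_\partial(f)\).

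\emph{Step 1 (capacitary cut-offs).} Since \(\operatorname{Cap}_\alpha(E)=0\), by the very definition of \(\alpha\)-capacity I pick for each \(n\) a function \(u_n\in C(\overline{\mathbb{B}}_d)\) with \(0\le u_n\le 1\), \(u_n\equiv 1\) on a neighbourhood of \(E\), with \(\operatorname{supp}u_n\) contained in a tube about \(E\) that shrinks as \(n\to\infty\), and with capacitary energy tending to \(0\). Using in addition \(\dim_H(E)<d-N\), I claim these can be chosen so that \(\|u_n\|_{B^N_\omega}\to0\): a function supported in a thin neighbourhood of a boundary set of Hausdorff dimension strictly below \(d-N\) carries negligible \(B^N_\omega\)-energy, because the weight cost of the \(N\)-fold radial derivative is more than offset by the collapse of the support. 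Proving this decay quantitatively — covering \(E\) by boundary balls, estimating \(\|R^N u_n\|_{L^2(\omega)}\) over the corresponding Carleson tubes, and summing — is the technical heart of the proof and is precisely dual to the construction of the annihilating measure in the proofs of Theorems~\ref{thm:hausdorff_cyclicity}–\ref{thm:capacity}.

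\emph{Step 2 (regularised inverse via the corona theorem).} Because \(1-u_n\) vanishes near \(E\), the pointwise quotient \((1-u_n)/f\) is bounded on \(\mathbb{B}_d\); the only gap is that boundedness does not by itself give membership in \(\operatorname{Mult}(B^N_\omega)\). To close it I introduce an auxiliary symbol \(h_n\) — for instance \(h_n:=(1-u_n)f+u_n\), or a mollification thereof — chosen so that \(h_n\) is a multiplier with \(|h_n|\ge c_n>0\) on all of \(\mathbb{B}_d\) (bounded below away from \(E\) because \(|f|\) is, and near \(E\) because \(u_n\) is close to \(1\) while \(f\) is small). Theorem~\ref{thm:corona} then yields \(1/h_n\in\operatorname{Mult}(B^N_\omega)\), hence \(\varphi_n:=(1-u_n)/h_n\in\operatorname{Mult}(B^N_\omega)\). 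Since \(h_n=f\) on \(\{u_n=0\}\) one has \(\varphi_n f=1-u_n\) there, while on \(\operatorname{supp}u_n\) the discrepancy \(1-\varphi_n f\) is given by an explicit rational expression in \(f,u_n,h_n\) that is again supported in the thin tube and controlled there.

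\emph{Step 3 (conclusion).} Combining Steps 1 and 2, \(\|1-\varphi_n f\|_{B^N_\omega}\le\|u_n\|_{B^N_\omega}+(\text{tube contribution})\to 0\), whence \(\mathcal{C}(f)=0\) and \(f\) is cyclic in \(B^N_\omega\) by Theorem~\ref{thm:char}. A structurally cleaner alternative is the dual route: were \(f\) not cyclic there would be a nonzero \(\Lambda\in (B^N_\omega)^*\) annihilating \([f]\); represent \(\Lambda\) through the reproducing kernel / Cauchy-type transform, argue that it must be carried by \(Z(f)=E\), and invoke \(\dim_H(E)<d-N\) together with \(\operatorname{Cap}_\alpha(E)=0\) to conclude \(\Lambda=0\). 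Either way the decisive, and genuinely hard, point is the same — showing that a boundary set this thin supports no nonzero bounded functional on \(B^N_\omega\), equivalently that cut-offs around it are \(B^N_\omega\)-negligible — together with the subsidiary check that the regularised inverse lands in \(\operatorname{Mult}(B^N_\omega)\) and not merely in \(H^\infty(\mathbb{B}_d)\).
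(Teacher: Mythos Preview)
Your proposal follows essentially the same route as the paper: both argue that because \(E=Z_\partial(f)\) is potential-theoretically negligible, one can build cut-offs supported near \(E\), approximate \(1/f\) on the complement, and assemble multipliers \(\varphi_n\) with \(\varphi_n f\to 1\); the paper phrases this as a ``partition of unity argument along with the fact that the obstruction from \(Z_\partial(f)\) is absent'' and then invokes density of multipliers, while you make the same scheme more concrete via the capacitary cut-offs \(u_n\) and the auxiliary symbol \(h_n\). Your explicit appeal to Theorem~\ref{thm:corona} to force \(1/h_n\in\operatorname{Mult}(B^N_\omega)\), and your candid identification of the energy decay \(\|u_n\|_{B^N_\omega}\to 0\) as the technical heart, go somewhat beyond the paper's own sketch, which leaves both of these points implicit; your alternative dual-functional route is likewise consonant with (and inverts) the arguments of Theorems~\ref{thm:hausdorff_cyclicity}--\ref{thm:capacity}.
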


\begin{proof}
The assumptions state that the boundary zero set 
\[
Z_{\partial}(f) = \{z\in \partial\mathbb{B}_d: f(z)=0\}
\]
has Hausdorff dimension strictly less than \(d-N\) and zero \(\alpha\)-capacity for some \(\alpha>0\). In potential theory, zero capacity means that \(Z_{\partial}(f)\) is negligibly small in a measure-theoretic sense; in particular, any potential-theoretic obstruction (e.g., via test measures) is absent.\\

\noindent 
In previous results, we saw that large or "thick" zero sets (in terms of Hausdorff dimension or capacity) prevent the existence of multipliers \(\varphi\) such that \(\varphi f\) approximates \(1\). Here, since 
\[
\operatorname{Cap}_\alpha(Z_{\partial}(f)) = 0,
\]
any multiplier \(\varphi\) is not forced to vanish on a significant portion of the boundary; the obstruction to approximating \(1\) is removed. In other words, the set where \(f\) vanishes on the boundary is too small to interfere with the approximation process.\\

\noindent  
Because polynomials are dense in \(B^N_\omega\) and because \(f\) does not vanish in the interior (or, more precisely, its zero set is confined to a “small” subset of the boundary), one can construct a sequence \(\{\varphi_n\}\subset \operatorname{Mult}(B^N_\omega)\) with the following property:
\[
\|1 - \varphi_n f\|_{B^N_\omega} \to 0 \quad \text{as } n\to \infty.
\]
The construction of \(\varphi_n\) typically uses a partition of unity argument along with the fact that the obstruction from \(Z_{\partial}(f)\) is absent. In essence, one first approximates \(1/f\) on the set where \(f\) does not vanish (which is almost the entire boundary, since \(Z_{\partial}(f)\) is small) and then uses the density of multipliers to extend the approximation to the whole space.\\

\noindent
Once such a sequence \(\{\varphi_n\}\) is constructed, we have
\[
\varphi_n f \to 1 \quad \text{in } B^N_\omega.
\]
This shows that \(1\) is in the closure of the set \(\{\varphi f: \varphi\in \operatorname{Mult}(B^N_\omega)\}\); by definition, this means that \(f\) is cyclic.\\

\noindent  
Thus, under the conditions
\[
\dim_H(Z_{\partial}(f)) < d - N \quad \text{and} \quad \operatorname{Cap}_\alpha(Z_{\partial}(f)) = 0,
\]
the boundary zero set of \(f\) is too small (in both Hausdorff and capacity senses) to obstruct the approximation of the constant function \(1\) by multiplier multiples of \(f\). Consequently, \(f\) is cyclic in \(B^N_\omega\).\\

\noindent
This completes the proof.
\end{proof}

\noindent
This theorem provides a useful tool for verifying cyclicity in specific cases. Functions with sparse zero sets in the sense of both Hausdorff dimension and capacity are likely to be cyclic.

\section{Mixed Norm and Variable Exponent Extensions}\label{sec:mixednorm}

In this section, we extend the cyclicity framework developed for standard radially weighted Besov spaces to more general settings: mixed norm Besov spaces and variable exponent Besov spaces. These spaces allow for a finer analysis of local and directional behavior, which is essential in many applications.

\begin{definition}[Mixed Norm Besov Space]
Let \(\mathbb{B}_d\) denote the unit ball in \(\mathbb{C}^d\). Suppose that the admissible radial measure \(\omega\) on \(\mathbb{B}_d\) is expressed in polar coordinates as
\[
d\omega(z)=d\mu(r)\,d\sigma(w),\quad z=r\,w,\quad 0\le r<1,\; w\in \partial\mathbb{B}_d.
\]
For exponents \(1\le p,q<\infty\) and an integer \(N\ge 0\), the \emph{mixed norm Besov space} \(B^{N}_{\omega}(p,q)\) consists of all holomorphic functions \(f\) on \(\mathbb{B}_d\) with finite norm
\[
\|f\|_{B^{N}_{\omega}(p,q)} = \left( \int_{0}^{1} \left( \int_{\partial\mathbb{B}_d} |R^N f(rw)|^p \, d\sigma(w) \right)^{q/p} \, d\mu(r) \right)^{1/q},
\]
where \(R\) denotes the radial derivative:
\[
R f(z)=\sum_{j=1}^{d} z_j\frac{\partial f}{\partial z_j}(z).
\]
\end{definition}

\noindent
The mixed norm structure decouples the radial and angular behaviors, which can be crucial when the function exhibits different integrability properties in these directions.

\begin{definition}[Variable Exponent Besov Space]
Let \(p:\mathbb{B}_d \to [1,\infty)\) be a measurable function (the variable exponent) satisfying appropriate continuity conditions (e.g., log-Hölder continuity). The \emph{variable exponent Besov space} \(B^{N}_{\omega,p(\cdot)}\) consists of all holomorphic functions \(f\) on \(\mathbb{B}_d\) for which
\[
\|f\|_{B^{N}_{\omega,p(\cdot)}} = \inf\left\{ \lambda>0:\; \int_{\mathbb{B}_d} \left|\frac{R^N f(z)}{\lambda}\right|^{p(z)} \, d\omega(z) \le 1 \right\} < \infty.
\]
Alternatively, one may define a modular function associated with the norm (see \cite{CruzUribe2013, Diening2011}).
\end{definition}

\noindent
Variable exponent spaces generalize classical Besov spaces by allowing the integrability index to vary with the spatial variable. This flexibility is useful in settings where the local regularity or decay properties of functions are non-uniform.

\begin{lemma}[Density of Polynomials in \(B^{N}_{\omega}(p,q)\)]\label{lem:mixed_mult}
Let \(B^{N}_{\omega}(p,q)\) be the mixed norm Besov space with \(1\le p,q<\infty\). Then every holomorphic polynomial belongs to \(\operatorname{Mult}(B^{N}_{\omega}(p,q))\), and the set of polynomials is dense in \(B^{N}_{\omega}(p,q)\).
\end{lemma}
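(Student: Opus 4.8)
The statement has two parts: (1) every holomorphic polynomial multiplies $B^N_\omega(p,q)$ into itself, and (2) the polynomials are dense in $B^N_\omega(p,q)$. The plan is to treat them in that order, since part (1) applied to $f\equiv 1$ (which lies in $B^N_\omega(p,q)$ because $\mu$ is finite) already shows that all polynomials belong to the space. Throughout, the two recurring structural facts are that $R$ is a derivation and that the spherical $L^p$-means $M_p(r,h):=\bigl(\int_{\partial\mathbb{B}_d}|h(rw)|^p\,d\sigma(w)\bigr)^{1/p}$ of a holomorphic $h$ are nondecreasing in $r$ (subharmonicity of $|h|^p$).

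For part (1), since $\operatorname{Mult}(B^N_\omega(p,q))$ is an algebra it suffices to handle each coordinate function $z_j$ and then iterate. Because $R$ is a derivation it obeys the binomial Leibniz rule, and since $Rz_j=z_j$ one gets the clean identity $R^N(z_jf)=z_j\,(I+R)^Nf=z_j\sum_{i=0}^N\binom Ni R^i f$. Taking the $L^p(d\sigma)$-norm in $w$ and then the $L^q(d\mu)$-norm in $r$, and using $|z_j|\le 1$ on $\mathbb{B}_d$, the claim reduces to the sub-scale estimate $\|R^kf\|_{L^{p,q}(\omega)}\lesssim_N \|f\|_{B^N_\omega(p,q)}$ for $0\le k\le N$. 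I would prove this from the integral representation obtained by applying $m^{-(N-k)}=\frac{1}{(N-k-1)!}\int_0^1(\log\tfrac1u)^{N-k-1}u^{m-1}\,du$ term-by-term to the homogeneous expansion of $f$, which gives $R^kf(z)=c_{N,k}\int_0^1(\log\tfrac1u)^{N-k-1}\frac{(R^Nf)(uz)}{u}\,du$ (plus the harmless constant $f(0)$ when $k=0$); one then splits the $u$-integral at $u=\tfrac12$, controls the piece near $1$ by monotonicity of $M_p(\cdot,R^Nf)$, and controls the piece near $0$ by a Cauchy/Schwarz-type bound $M_p(s,R^Nf)\le C\,s\,M_\infty(\tfrac34,R^Nf)$ for $s\le\tfrac12$ together with the admissibility hypothesis $\mu((\tfrac34,1])>0$, which yields $M_\infty(\tfrac34,R^Nf)\lesssim\|R^Nf\|_{L^{p,q}(\omega)}$ via an interior point-evaluation estimate at a fixed radius. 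General polynomials follow by iterating the coordinate case.

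For part (2) I would use a two-step approximation. Step one: for $0<t<1$ put $f_t(z)=f(tz)$; since $R$ commutes with dilation ($Rf_t=(Rf)_t$, hence $R^Nf_t=(R^Nf)_t$) and the means of the holomorphic function $g:=R^Nf$ are nondecreasing in radius, dilation is a contraction on $B^N_\omega(p,q)$, and $\|f-f_t\|_{B^N_\omega(p,q)}\to 0$ as $t\uparrow1$ by dominated convergence in $d\mu(r)$ with the $\mu$-integrable dominating function $2^qM_p(r,g)^q$ (the pointwise-in-$r$ convergence being immediate from uniform continuity of $g$ on $\overline{\mathbb{B}}_r$). Step two: for each fixed $t<1$ the function $f_t$ is holomorphic on a neighbourhood of $\overline{\mathbb{B}}_d$, so its Taylor polynomials $P_k$ (partial sums of the homogeneous expansion) converge to $f_t$, together with all radial derivatives, uniformly on $\overline{\mathbb{B}}_d$; since $\omega$ is finite this forces $\|P_k-f_t\|_{B^N_\omega(p,q)}\to 0$. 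An $\varepsilon/2$ argument combining the two steps produces polynomials converging to $f$.

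The main obstacle is the sub-scale estimate $\|R^kf\|_{L^{p,q}(\omega)}\lesssim\|f\|_{B^N_\omega(p,q)}$ in part (1). In the Hilbert-space setting this is the content of the standard $B^N_\omega$ formalism and of Lemma~\ref{lem:multincl}, obtained through orthogonality; here $p\ne 2$ and $p\ne q$ are allowed, so one must instead run the integral-representation argument above, which uses exactly the weak admissibility condition $\mu((r,1])>0$ (through the fixed-radius point-evaluation bound) together with the monotonicity of holomorphic $L^p$-means. Once this estimate is in hand, the Leibniz computation, the contractivity and continuity of dilation, and the polynomial approximation of functions holomorphic past $\overline{\mathbb{B}}_d$ are all routine.
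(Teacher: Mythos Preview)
Your proposal is correct and is in fact considerably more careful than the paper's own argument. For part~(i) the paper simply invokes ``the product rule'' to assert $\|P\cdot f\|\le C\|f\|$, without ever confronting the point you isolate as the real issue: controlling the lower-order radial derivatives $\|R^k f\|_{L^{p,q}(\omega)}$ by $\|R^N f\|_{L^{p,q}(\omega)}$ when $p\neq 2$. Your integral-representation route (using $m^{-(N-k)}=\int_0^1(\log\tfrac1u)^{N-k-1}u^{m-1}\,du/(N-k-1)!$, monotonicity of $M_p$, and the admissibility condition $\mu((r,1])>0$ for the interior point-evaluation bound) is exactly the right substitute for the orthogonality arguments available only in the Hilbert case, and it genuinely fills a gap the paper leaves open. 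For part~(ii) you take a different route from the paper: the paper appeals directly to convergence of the homogeneous partial sums $P_mf=\sum_{n\le m}f_n$ in the mixed norm (citing Zhu), whereas you first dilate to $f_t(z)=f(tz)$, use dominated convergence in $d\mu$ to get $f_t\to f$, and then approximate each $f_t$ by its Taylor polynomials via uniform convergence on $\overline{\mathbb{B}}_d$. Your two-step approach is more elementary and self-contained---it avoids any delicate summability questions for the homogeneous expansion in $L^{p,q}$ and uses only finiteness of $\omega$ together with the contractivity of dilations---while the paper's direct partial-sum claim, though true, really requires an Abel/Ces\`aro-type argument that is essentially your dilation step in disguise.
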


\begin{proof}
We wish to prove two statements:
\begin{enumerate}[(i)]
    \item Every holomorphic polynomial is a multiplier on \(B^{N}_{\omega}(p,q)\); that is, for any polynomial \(P\) and any \(f\in B^{N}_{\omega}(p,q)\), we have \(P\cdot f\in B^{N}_{\omega}(p,q)\).
    \item The set of all holomorphic polynomials is dense in \(B^{N}_{\omega}(p,q)\).
\end{enumerate}

\subsubsection*{(i)}
Let \(P(z)\) be a holomorphic polynomial. Since \(P\) is an entire function of finite degree, it is bounded on compact subsets of \(\mathbb{B}_d\). Moreover, multiplication by \(P\) is an algebraic operation that increases the degree by a finite amount. By the definition of \(B^{N}_{\omega}(p,q)\) --- which, by construction, separates the radial and angular variables --- one can show that the product \(P\cdot f\) still belongs to \(B^{N}_{\omega}(p,q)\) for any \(f\in B^{N}_{\omega}(p,q)\). 

More precisely, if the norm in \(B^{N}_{\omega}(p,q)\) is given by
\[
\|f\|_{B^{N}_{\omega}(p,q)} = \left( \int_0^1 \left( \int_{\partial\mathbb{B}_d} |R^N f(rw)|^p \, d\sigma(w) \right)^{q/p} \, d\mu(r) \right)^{1/q},
\]
then for a fixed polynomial \(P\), the function \(P\) has finite degree and its derivatives up to order \(N\) are bounded. Hence, using the product rule, one obtains a constant \(C>0\) (depending on \(P\) and \(N\)) such that for all \(f\in B^{N}_{\omega}(p,q)\),
\[
\|P\cdot f\|_{B^{N}_{\omega}(p,q)} \le C\,\|f\|_{B^{N}_{\omega}(p,q)}.
\]
This shows that \(P\) defines a bounded multiplication operator, i.e., \(P\in \operatorname{Mult}(B^{N}_{\omega}(p,q))\).

\subsubsection*{(ii)}
Let \(f\in B^{N}_{\omega}(p,q)\) be an arbitrary function. Since \(f\) is holomorphic on \(\mathbb{B}_d\), it admits a power series expansion:
\[
f(z) = \sum_{n=0}^\infty f_n(z),
\]
where each \(f_n(z)\) is a homogeneous polynomial of degree \(n\).  

The mixed norm on \(B^{N}_{\omega}(p,q)\) typically splits into radial and angular parts. For each fixed \(r\) (the radial coordinate), the restriction of \(f\) to the sphere \(\{z=rw:\, w\in \partial \mathbb{B}_d\}\) is given by the series
\[
f(rw) = \sum_{n=0}^\infty f_n(rw).
\]
Using classical arguments (see, e.g., \cite{Zhu2005}), one shows that the partial sums
\[
P_m f(z) = \sum_{n=0}^m f_n(z)
\]
converge to \(f\) in the norm of \(B^{N}_{\omega}(p,q)\). This uses two facts: \medskip \\
\textit{Angular Approximation:} For each fixed \(r\), the function \(w\mapsto f(rw)\) is in a classical function space on the sphere where trigonometric (or spherical harmonic) polynomials are dense. \medskip \\
\textit{Radial Approximation:} The coefficients of the series in the radial variable are controlled by the weight \(\mu(r)\), so that the convergence of the power series in the radial direction is ensured.\\

\noindent
Thus, for every \(\epsilon > 0\) there exists an integer \(m\) such that
\[
\|f - P_m f\|_{B^{N}_{\omega}(p,q)} < \epsilon.
\]
Since \(P_m f\) is a holomorphic polynomial, this shows that the set of polynomials is dense in \(B^{N}_{\omega}(p,q)\).

\noindent
This completes the proof.
\end{proof}

\begin{theorem}[Cyclicity in Mixed Norm Besov Spaces]\label{thm:cyclic_mixed}
Let \(f\in B^{N}_{\omega}(p,q)\) be a function that extends continuously to \(\overline{\mathbb{B}}_d\) and satisfies \(f(z)\neq 0\) for all \(z\in\mathbb{B}_d\). Then \(f\) is cyclic in \(B^{N}_{\omega}(p,q)\) if and only if
\[
\inf\Bigl\{\|1-\varphi f\|_{B^{N}_{\omega}(p,q)} : \varphi\in \operatorname{Mult}(B^{N}_{\omega}(p,q))\Bigr\}=0.
\]
\end{theorem}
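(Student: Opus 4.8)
The plan is to treat this as the mixed-norm analogue of Theorem~\ref{thm:char}. The essential point is that, by Lemma~\ref{lem:mixed_mult}, the holomorphic polynomials are both multipliers of $B^{N}_{\omega}(p,q)$ and dense in it; this reduces cyclicity of $f$ to the single membership statement $1\in[f]$, after which the equivalence with the vanishing of the displayed infimum is formal. I would prove the two implications separately, writing for brevity $\mathcal{C}_{p,q}(f):=\inf\bigl\{\|1-\varphi f\|_{B^{N}_{\omega}(p,q)}:\varphi\in\operatorname{Mult}(B^{N}_{\omega}(p,q))\bigr\}$.

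For the forward implication, assume $f$ is cyclic, i.e.\ $[f]=B^{N}_{\omega}(p,q)$. Since $1\in B^{N}_{\omega}(p,q)=[f]=\overline{\{\varphi f:\varphi\in\operatorname{Mult}(B^{N}_{\omega}(p,q))\}}$, there is a sequence $\{\varphi_n\}\subset\operatorname{Mult}(B^{N}_{\omega}(p,q))$ with $\varphi_n f\to1$ in the norm of $B^{N}_{\omega}(p,q)$; hence $\|1-\varphi_n f\|_{B^{N}_{\omega}(p,q)}\to0$, so $\mathcal{C}_{p,q}(f)=0$.

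For the reverse implication, assume $\mathcal{C}_{p,q}(f)=0$, so for every $\varepsilon>0$ there is $\varphi\in\operatorname{Mult}(B^{N}_{\omega}(p,q))$ with $\|1-\varphi f\|_{B^{N}_{\omega}(p,q)}<\varepsilon$; thus $1\in[f]$. Next I would verify that $[f]$ is invariant under every multiplication operator $M_\psi$, $\psi\in\operatorname{Mult}(B^{N}_{\omega}(p,q))$: for a multiplier $\psi$ one has $\psi(\varphi f)=(\psi\varphi)f$ with $\psi\varphi\in\operatorname{Mult}(B^{N}_{\omega}(p,q))$, and since $M_\psi$ is bounded on $B^{N}_{\omega}(p,q)$ (by the very definition of the multiplier algebra) it carries the closure $[f]$ into itself. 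Applying this to $1\in[f]$ gives $\psi=\psi\cdot1\in[f]$ for every $\psi\in\operatorname{Mult}(B^{N}_{\omega}(p,q))$; in particular, by Lemma~\ref{lem:mixed_mult}, every holomorphic polynomial belongs to $[f]$. Since the polynomials are dense in $B^{N}_{\omega}(p,q)$, again by Lemma~\ref{lem:mixed_mult}, we obtain $[f]=B^{N}_{\omega}(p,q)$, i.e.\ $f$ is cyclic.

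The genuine content is packaged entirely inside Lemma~\ref{lem:mixed_mult}; the only point demanding care in the mixed-norm (Banach, rather than Hilbert) setting is that reproducing-kernel and orthogonal-projection arguments are unavailable, so the invariance of $[f]$ under $M_\psi$ must be obtained directly from boundedness of $M_\psi$ and continuity of multiplication — which I expect to present no real difficulty. I would also remark that the hypotheses that $f$ extend continuously to $\overline{\mathbb{B}}_d$ and be zero-free in $\mathbb{B}_d$ play no role in this particular equivalence; they are retained only to keep the statement parallel with Theorem~\ref{thm:char} and to make $\mathcal{C}_{p,q}(f)$ the natural obstruction in view of the capacity-type results of Sections~\ref{sec:quantcyclic} and~\ref{sec:geomzero}.
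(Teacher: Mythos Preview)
Your argument is correct. The necessity direction matches the paper's proof verbatim. For sufficiency you take a genuinely different---and tidier---route: from $\mathcal{C}_{p,q}(f)=0$ you conclude $1\in[f]$, then use multiplier-invariance of $[f]$ together with Lemma~\ref{lem:mixed_mult} to place every polynomial inside $[f]$, and finish by density. The paper instead passes from the near-optimal multiplier $\varphi$ to a polynomial approximant $\psi_n$ \emph{in the multiplier norm}, arguing that $\psi_n f\to 1$; this step is not actually supplied by Lemma~\ref{lem:mixed_mult} (which gives density of polynomials in the space, not in $\operatorname{Mult}(B^{N}_{\omega}(p,q))$ with its operator norm), and in any event the paper still ends only with $1\in[f]$, leaving the passage to $[f]=B^{N}_{\omega}(p,q)$ implicit. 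Your invariance argument makes that passage explicit and avoids the multiplier-norm approximation entirely. Your remark that the hypotheses $f\in C(\overline{\mathbb{B}}_d)$ and $f\neq 0$ on $\mathbb{B}_d$ are not used is also correct; the paper's own proof does not invoke them.
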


\begin{proof}
We prove the necessity and sufficiency part by part.
\subsubsection*{Necessity}  
Assume that \(f\) is cyclic in \(B^{N}_{\omega}(p,q)\). By definition, this means that the multiplier invariant subspace generated by \(f\),
\[
[f] = \overline{\{\varphi f : \varphi\in \operatorname{Mult}(B^{N}_{\omega}(p,q))\}},
\]
is equal to the entire space \(B^{N}_{\omega}(p,q)\). In particular, the constant function \(1\) belongs to \([f]\). Hence, there exists a sequence \(\{\varphi_n\}\subset \operatorname{Mult}(B^{N}_{\omega}(p,q))\) such that
\[
\varphi_n f \to 1 \quad \text{in } B^{N}_{\omega}(p,q).
\]
Taking the infimum over all multipliers, we immediately obtain
\[
\inf\Bigl\{\|1-\varphi f\|_{B^{N}_{\omega}(p,q)} : \varphi\in \operatorname{Mult}(B^{N}_{\omega}(p,q))\Bigr\} = 0.
\]
Thus, necessity is established.

\subsubsection*{Sufficiency}  
Now, assume that 
\[
\inf\Bigl\{\|1-\varphi f\|_{B^{N}_{\omega}(p,q)} : \varphi\in \operatorname{Mult}(B^{N}_{\omega}(p,q))\Bigr\} = 0.
\]
This means that for every \(\epsilon > 0\), there exists a multiplier \(\varphi \in \operatorname{Mult}(B^{N}_{\omega}(p,q))\) such that
\[
\|1-\varphi f\|_{B^{N}_{\omega}(p,q)} < \epsilon.
\]
Our goal is to show that these approximants can be used to conclude that \(1\) lies in the closure of the set \(\{\varphi f : \varphi\in \operatorname{Mult}(B^{N}_{\omega}(p,q))\}\); in other words, \(f\) is cyclic.\\

\noindent 
By Lemma~\ref{lem:mixed_mult}, we know that holomorphic polynomials belong to \(\operatorname{Mult}(B^{N}_{\omega}(p,q))\) and, moreover, that they are dense in \(B^{N}_{\omega}(p,q)\). This fact allows us to approximate any function in \(B^{N}_{\omega}(p,q)\) arbitrarily well by polynomials. In particular, given any multiplier \(\varphi\) for which \(\|1-\varphi f\|_{B^{N}_{\omega}(p,q)} < \epsilon\), we can approximate \(\varphi\) by a polynomial (with respect to the multiplier norm) without increasing the error too much. Consequently, there exists a sequence \(\{\psi_n\}\) of polynomials such that
\[
\|\psi_n - \varphi\|_{\operatorname{Mult}(B^{N}_{\omega}(p,q))} \to 0.
\]
Since the multiplication operator is continuous, it follows that
\[
\|\psi_n f - \varphi f\|_{B^{N}_{\omega}(p,q)} \to 0.
\]
Thus, replacing \(\varphi\) by a suitable polynomial approximant, we obtain a sequence \(\{\psi_n\}\) of polynomials satisfying
\[
\|\psi_n f - 1\|_{B^{N}_{\omega}(p,q)} < \epsilon + \eta,
\]
where \(\eta>0\) can be made arbitrarily small. Therefore, there exists a sequence of polynomials \(\{\psi_n\}\subset \operatorname{Mult}(B^{N}_{\omega}(p,q))\) such that
\[
\psi_n f \to 1 \quad \text{in } B^{N}_{\omega}(p,q).
\]
Since we have found a sequence \(\{\psi_n\}\subset \operatorname{Mult}(B^{N}_{\omega}(p,q))\) for which \(\psi_n f \to 1\) in \(B^{N}_{\omega}(p,q)\), by definition the function \(f\) is cyclic in \(B^{N}_{\omega}(p,q)\).\\

\noindent
This completes the proof.
\end{proof}

\begin{theorem}[Cyclicity in Variable Exponent Besov Spaces]\label{thm:cyclic_variable}
Assume that the variable exponent \(p(\cdot)\) satisfies a log-Hölder continuity condition and let \(f\in B^{N}_{\omega,p(\cdot)}\) extend continuously to \(\overline{\mathbb{B}}_d\) with \(f(z)\neq 0\) for all \(z\in\mathbb{B}_d\). Then \(f\) is cyclic in \(B^{N}_{\omega,p(\cdot)}\) if and only if
\[
\inf\Bigl\{\|1-\varphi f\|_{B^{N}_{\omega,p(\cdot)}} : \varphi\in \operatorname{Mult}(B^{N}_{\omega,p(\cdot)})\Bigr\}=0.
\]
\end{theorem}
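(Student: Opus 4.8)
The plan is to follow the template of Theorem~\ref{thm:cyclic_mixed} at the level of soft functional analysis, isolating the two structural facts that make the argument run: that holomorphic polynomials act as bounded multipliers on $B^{N}_{\omega,p(\cdot)}$, and that they are dense in it. Granting these, the equivalence is immediate. For \emph{necessity}, if $f$ is cyclic then $[f]=B^{N}_{\omega,p(\cdot)}\ni 1$, so there is a sequence $\{\varphi_n\}\subset\operatorname{Mult}(B^{N}_{\omega,p(\cdot)})$ with $\varphi_n f\to 1$, whence the stated infimum is $0$. For \emph{sufficiency}, if the infimum is $0$ then $1$ lies in $\overline{\{\varphi f:\varphi\in\operatorname{Mult}(B^{N}_{\omega,p(\cdot)})\}}=[f]$; since each polynomial $P$ is a multiplier, $P=P\cdot 1\in[f]$, and density of polynomials then forces $[f]=B^{N}_{\omega,p(\cdot)}$, i.e.\ $f$ is cyclic. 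Thus the theorem reduces to the variable-exponent analogue of Lemma~\ref{lem:mixed_mult}.

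For the multiplier claim I would argue that for any $\varphi\in L^\infty(\mathbb{B}_d)$ multiplication by $\varphi$ is bounded on the modular space defined by the Luxemburg gauge, with operator norm controlled by $\|\varphi\|_\infty$; this is a one-line estimate on the modular $\int_{\mathbb{B}_d}|R^N f(z)/\lambda|^{p(z)}\,d\omega(z)$ and needs no regularity of $p(\cdot)$. Using that $R$ is a derivation, one has the Leibniz expansion $R^{N}(Pf)=\sum_{j=0}^{N}\binom{N}{j}(R^{N-j}P)(R^{j}f)$, where each $R^{N-j}P$ is a polynomial and hence bounded on $\overline{\mathbb{B}}_d$; combining the two observations gives $\|Pf\|_{B^{N}_{\omega,p(\cdot)}}\le C(P,N)\,\|f\|_{B^{N}_{\omega,p(\cdot)}}$, so $P\in\operatorname{Mult}(B^{N}_{\omega,p(\cdot)})$.

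For density I would use a two-step approximation. First, for $0<r<1$ put $f_r(z)=f(rz)$; since $f_r$ is holomorphic on a neighborhood of $\overline{\mathbb{B}}_d$, its Taylor partial sums converge to $f_r$ uniformly on $\overline{\mathbb{B}}_d$, and uniform convergence of the $R^N$-derivatives together with finiteness of $\omega$ dominates the Luxemburg modular, giving convergence in $B^{N}_{\omega,p(\cdot)}$. Second, one must show $f_r\to f$ in $B^{N}_{\omega,p(\cdot)}$ as $r\to 1^-$; this is where the log-Hölder hypothesis enters, since it guarantees the boundedness of the Hardy--Littlewood maximal operator on the variable-exponent space and, with it, norm-continuity of dilations and density of smooth functions (see \cite{CruzUribe2013, Diening2011}). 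Applying this to $R^{N}f$ and using $R^{N}(f_r)(z)=(R^{N}f)(rz)$ yields $\|f_r-f\|_{B^{N}_{\omega,p(\cdot)}}\to 0$. Composing the two steps produces polynomials converging to $f$.

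The main obstacle is precisely this last density step: in a variable-exponent space the norm is only a gauge functional, dilation is not an isometry, and dilation/translation continuity can fail for irregular exponents, so the argument genuinely relies on log-Hölder continuity and the attendant harmonic-analytic machinery rather than on the elementary radial–angular splitting available in the mixed-norm case. A secondary technical point is verifying that $R^{N}$ interacts cleanly with dilation and that the radial weight $\mu$ on $[0,1]$ does not spoil the dilation estimate near $r=1$; both are routine once the maximal-function bound is in hand.
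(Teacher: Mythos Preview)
Your proposal is correct and follows essentially the same approach as the paper: both directions reduce to showing that holomorphic polynomials are multipliers and are dense in $B^{N}_{\omega,p(\cdot)}$, with the log-H\"older hypothesis invoked (via \cite{CruzUribe2013, Diening2011}) precisely to secure the density step. If anything, your treatment is more explicit than the paper's own proof, which largely defers the multiplier and density claims to the cited references and to the mixed-norm template rather than spelling out the Leibniz and dilation-plus-Taylor arguments you sketch.
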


\begin{proof}
The proof proceeds along similar lines as in the constant exponent case, with additional care to handle the variable exponent setting using modular techniques.\\

\noindent
Recall that the variable exponent Besov space \(B^{N}_{\omega,p(\cdot)}\) consists of holomorphic functions \(g\) on \(\mathbb{B}_d\) for which the modular
\[
\rho(g) = \int_{\mathbb{B}_d} |R^N g(z)|^{p(z)} \, d\omega(z)
\]
is finite, where \(R^N\) denotes the \(N\)th radial derivative. The norm \(\|g\|_{B^{N}_{\omega,p(\cdot)}}\) is defined via the Luxemburg norm associated with the modular.  
A key assumption is that the variable exponent \(p(\cdot)\) satisfies a log-Hölder continuity condition, which guarantees standard properties such as the boundedness of the Hardy--Littlewood maximal operator and the stability of the corresponding function spaces (see, e.g., \cite{CruzUribe2013, Diening2011}).

\subsubsection*{Necessity}  
Assume that \(f\) is cyclic in \(B^{N}_{\omega,p(\cdot)}\). By definition, this means that the multiplier invariant subspace generated by \(f\),
\[
[f] = \overline{\{\varphi f: \varphi \in \operatorname{Mult}(B^{N}_{\omega,p(\cdot)})\}},
\]
equals \(B^{N}_{\omega,p(\cdot)}\). In particular, the constant function \(1\) belongs to \([f]\); that is, there exists a sequence \(\{\varphi_n\}\subset \operatorname{Mult}(B^{N}_{\omega,p(\cdot)})\) such that
\[
\varphi_n f \to 1 \quad \text{in } B^{N}_{\omega,p(\cdot)}.
\]
Taking the infimum over all such multipliers immediately yields
\[
\inf\Bigl\{\|1-\varphi f\|_{B^{N}_{\omega,p(\cdot)}}: \varphi \in \operatorname{Mult}(B^{N}_{\omega,p(\cdot)})\Bigr\} = 0.
\]

\subsubsection*{Sufficiency}  
Now, assume that
\[
\inf\Bigl\{\|1-\varphi f\|_{B^{N}_{\omega,p(\cdot)}}: \varphi \in \operatorname{Mult}(B^{N}_{\omega,p(\cdot)})\Bigr\} = 0.
\]
This means that for every \(\varepsilon > 0\) there exists a multiplier \(\varphi_\varepsilon \in \operatorname{Mult}(B^{N}_{\omega,p(\cdot)})\) such that
\[
\|1-\varphi_\varepsilon f\|_{B^{N}_{\omega,p(\cdot)}} < \varepsilon.
\]
Thus, the constant function \(1\) can be approximated arbitrarily well (in the Luxemburg norm) by elements of the form \(\varphi f\).\\

\noindent
An important tool in the variable exponent setting is the density of polynomials in \(B^{N}_{\omega,p(\cdot)}\) (see, e.g., \cite{CruzUribe2013, Diening2011}). Because \(f\) extends continuously to \(\overline{\mathbb{B}}_d\) and does not vanish in the ball, one can approximate any multiplier \(\varphi\) by holomorphic polynomials in the multiplier norm. Therefore, by replacing each \(\varphi_\varepsilon\) with an appropriate polynomial approximant (if necessary), we can construct a sequence \(\{\psi_n\}\subset \operatorname{Mult}(B^{N}_{\omega,p(\cdot)})\) such that
\[
\psi_n f \to 1 \quad \text{in } B^{N}_{\omega,p(\cdot)}.
\]
This shows that \(1\) is in the closure of the set \(\{\varphi f: \varphi\in \operatorname{Mult}(B^{N}_{\omega,p(\cdot)})\}\).\\

\noindent 
Since we have constructed a sequence of multipliers \(\{\psi_n\}\) such that
\[
\psi_n f \to 1 \quad \text{in } B^{N}_{\omega,p(\cdot)},
\]
by definition \(f\) is cyclic in \(B^{N}_{\omega,p(\cdot)}\). Hence, we have shown that
\[
f \text{ is cyclic in } B^{N}_{\omega,p(\cdot)} \iff \inf\Bigl\{\|1-\varphi f\|_{B^{N}_{\omega,p(\cdot)}} : \varphi\in \operatorname{Mult}(B^{N}_{\omega,p(\cdot)})\Bigr\} = 0.
\]
This completes the proof.
\end{proof}

\noindent
Theorems~\ref{thm:cyclic_mixed} and \ref{thm:cyclic_variable} generalize the classical cyclicity criteria to more flexible spaces. In particular, the mixed norm setting allows different integrability conditions in the radial and angular directions, while the variable exponent setting accommodates spatial heterogeneity in function behavior.

\section{Concluding Remarks and Open Problems}\label{sec:conclusion}

In this paper, we have extended classical cyclicity theory in several innovative directions. Our main contributions can be summarized as follows:\\
We introduced the notion of a \emph{cyclicity index} for functions in weighted Besov spaces and demonstrated that it provides a quantitative measure of cyclicity (Section~\ref{sec:quantcyclic}). This index is shown to be both robust under small perturbations (Theorem~\ref{thm:stability_function}) and intimately connected to potential-theoretic invariants.\\
We developed free analogues of classical cyclicity concepts by defining free holomorphic functions, free multiplier algebras, and a \emph{free cyclicity index} (Section~\ref{sec:noncomm}). Our results include a free version of the corona theorem (Theorem~\ref{thm:free_corona}) and a connection between free and commutative cyclicity (Proposition~\ref{prop:compression}).\\
 We established stability results for cyclic functions under both function and weight perturbations (Section~\ref{sec:stability}). These results ensure that the cyclicity property is preserved in variable environments, which is essential for both theoretical and applied settings.\\
We provided new geometric criteria relating the structure of zero sets—through Hausdorff dimension and capacity—to cyclicity (Section~\ref{sec:geomzero}). In particular, we showed that sufficiently large or structured zero sets obstruct cyclicity.\\
Finally, we extended the cyclicity framework to \emph{mixed norm} and \emph{variable exponent Besov spaces} (Section~\ref{sec:mixednorm}), broadening the applicability of our theory to settings with anisotropic behavior and spatial heterogeneity.\\

\noindent
While our work provides a robust framework for understanding cyclicity in a variety of settings, several intriguing questions remain open:\\

\noindent
\textsc{Refinement of the Cyclicity Index:} Can one define higher-order cyclicity indices that capture finer gradations of non-cyclicity? For instance, can one associate a complete spectrum of indices that reflect various degrees of failure to approximate the identity? \medskip\\

\noindent    
\textsc{Optimal Geometric Thresholds:} The sufficient conditions based on Hausdorff dimension and capacity (Theorems~\ref{thm:hausdorff_cyclicity} and \ref{thm:capacity}) provide necessary obstructions to cyclicity. Is it possible to obtain sharp necessary and sufficient conditions, perhaps by using more refined geometric invariants? \medskip\\

\noindent
\textsc{Free Function Spaces:} The free cyclicity framework introduced here opens many avenues. How do the invariants in the free setting compare quantitatively with their commutative counterparts? Can free cyclicity be characterized in terms of non-commutative capacities or free analogues of Carleson measures? \medskip\\

\noindent
\textsc{Stability Constants:} Our stability results (Theorem~\ref{thm:stability_function} and Proposition~\ref{prop:weight_stability}) involve constants that depend on the underlying spaces. Can these constants be explicitly computed or estimated in specific cases (e.g., for the Drury–Arveson space) to provide quantitative robustness results? \medskip\\

\noindent    
\textsc{Applications to PDEs and Harmonic Analysis:} Mixed norm and variable exponent Besov spaces appear naturally in the study of nonlinear PDEs and harmonic analysis. How can the cyclicity results in these settings be applied to problems in these fields, and what new phenomena might they reveal?\\

\noindent
The novel definitions and results presented in this work not only deepen our theoretical understanding of cyclicity in classical and free settings but also open the door to numerous applications. We hope that our framework inspires further research on invariant subspaces, potential theory, and non-commutative function spaces, and that it leads to fruitful interactions with other areas of analysis and operator theory.

\end{document}